\documentclass[a4paper, twoside]{scrartcl}

\usepackage{amsfonts,amssymb,amsmath,amsthm,amstext,amssymb,mathtools}
\usepackage{subcaption,float,color,graphicx,enumitem}
\usepackage{dsfont}  
\usepackage[normalem]{ulem} 
\usepackage[authoryear,sort,round]{natbib}  
\usepackage{fullpage} 
\usepackage{hyperref,nicefrac}
\hypersetup{
	colorlinks,
	linkcolor=blue,
	citecolor=blue,
	urlcolor=blue,
}
\newcommand*{\arXiv}[1]{\bgroup\color{blue}\href{https://arxiv.org/abs/#1}{arXiv:#1}\egroup}
\newcommand*{\doi}[1]{\bgroup\color{blue}\href{https://doi.org/#1}{doi:#1}\egroup}
\newcommand*{\email}[1]{\bgroup\color{blue}\href{mailto:#1}{#1}\egroup}
\renewcommand*{\url}[1]{\bgroup\color{blue}\href{#1}{#1}\egroup}
\usepackage{enumitem, moreenum}
\setlist[enumerate]{nosep}
\setlist[itemize]{nosep}
\usepackage{mleftright} \mleftright
\renewcommand{\qedsymbol}{$\blacksquare$}
\renewenvironment{proof}[1][\proofname]{\noindent{\bfseries\sffamily #1.} }{\hfill\qedsymbol\medskip}

\usepackage[table]{xcolor}
\usepackage{dsfont}

\usepackage{adjustbox}
\usepackage{tikz}
\usetikzlibrary{bayesnet,er,trees,shapes.symbols,mindmap,arrows,decorations,shapes.misc,shapes.arrows,chains,matrix,positioning,scopes,decorations.pathmorphing,patterns}
\usepackage{tikz-cd} 

\usepackage{etex}
\usepackage[a4paper, top=88pt, bottom=88pt, left=72pt, right=72pt, headsep=16pt, footskip=28pt]{geometry}
\usepackage{bbm}

\usepackage{todonotes}
\usepackage[table]{xcolor}
\usepackage{booktabs}
\usepackage{tabularx}
\usepackage{makecell}
\usepackage[labelfont={sf,bf}]{caption}
\usepackage{scrlayer-scrpage, xhfill}

\usepackage{aliascnt}

\usepackage[noabbrev,capitalise,nosort,nameinlink]{cleveref}

\newcommand{\E}{\mathbb{E}}
\newcommand{\N}{\mathbb{N}}
\newcommand{\R}{\mathbb{R}}
\newcommand{\I}{\mathcal{I}}

\def\P{\mathbb{P}}
\renewcommand{\d}{\mathrm{d}}

\def\KL{D_{\operatorname{KL}}}

\DeclareMathOperator{\Var}{Var}

\definecolor{sw}{rgb}{0.8,0.1,0.1}

\definecolor{ca}{rgb}{0,0,1}

\definecolor{lt}{rgb}{1,0.53,0.0}

\newcommand*{\bR}{\mathbb{R}}

\newcommand*{\bN}{\mathbb{N}}

\newcommand*{\bE}{\mathbb{E}}
\newcommand*{\bV}{\mathbb{V}}
\newcommand*{\bP}{\mathbb{P}}

\newcommand*{\cB}{\mathcal{B}}
\newcommand*{\cN}{\mathcal{N}}

\newcommand*{\cF}{\mathcal{F}}

\newcommand*{\cP}{\mathcal{P}}

\makeatletter
\newcommand*\quark{\mathpalette\quark@{.5}}
\newcommand*\quark@[2]{\mathbin{\vcenter{\hbox{\scalebox{#2}{$\; \m@th#1\bullet \;$}}}}}
\makeatother

\makeatletter
\newcommand{\mylabel}[2]{#2\def\@currentlabel{#2}\label{#1}}
\makeatother

\newcommand*{\rd}{\mathrm{d}}

\DeclarePairedDelimiterX{\infdivx}[2]{(}{)}{#1\;\delimsize\|\;#2}

\DeclareMathOperator*{\MISE}{\mathsf{MISE}}
\DeclareMathOperator*{\MIAE}{\mathsf{MIAE}}
\DeclareMathOperator*{\IAE}{\mathsf{IAE}}

\definecolor{darkgreen}{rgb}{0,0.4,0}

\DeclarePairedDelimiter\abs{\lvert}{\rvert}%
\DeclarePairedDelimiter\mynorm{\lVert}{\rVert}%
\makeatletter
\let\oldabs\abs
\def\abs{\@ifstar{\oldabs}{\oldabs*}}
\let\oldnorm\mynorm
\def\mynorm{\@ifstar{\oldnorm}{\oldnorm*}}
\makeatother

\theoremstyle{plain}

\newtheorem{theorem}{Theorem}[section]

\newaliascnt{proposition}{theorem}
\newtheorem{proposition}[proposition]{Proposition}
\aliascntresetthe{proposition}

\newaliascnt{lemma}{theorem}
\newtheorem{lemma}[lemma]{Lemma}
\aliascntresetthe{lemma}

\newaliascnt{corollary}{theorem}
\newtheorem{corollary}[corollary]{Corollary}
\aliascntresetthe{corollary}

\theoremstyle{definition}

\newaliascnt{definition}{theorem}

\aliascntresetthe{definition}

\newaliascnt{assumption}{theorem}
\newtheorem{assumption}[assumption]{Assumption}
\aliascntresetthe{assumption}

\newaliascnt{remark}{theorem}
\newtheorem{remark}[remark]{Remark}
\aliascntresetthe{remark}

\newaliascnt{example}{theorem}

\aliascntresetthe{example}

\newaliascnt{condition}{theorem}

\aliascntresetthe{condition}

\newaliascnt{algorithm}{theorem}

\aliascntresetthe{algorithm}

\newaliascnt{notation}{theorem}

\aliascntresetthe{notation}

\newaliascnt{conjecture}{theorem}

\aliascntresetthe{conjecture}

\newaliascnt{approach}{theorem}

\aliascntresetthe{approach}

\newaliascnt{counterexample}{theorem}

\aliascntresetthe{counterexample}

\newaliascnt{problem}{theorem}

\aliascntresetthe{problem}

\newaliascnt{openproblem}{theorem}

\aliascntresetthe{openproblem}

\newaliascnt{axiom}{theorem}

\aliascntresetthe{axiom}

\newaliascnt{question}{theorem}

\aliascntresetthe{question}

\crefname{theorem}{Theorem}{Theorems}
\Crefname{theorem}{Theorem}{Theorems}

\crefname{proposition}{Proposition}{Propositions}
\Crefname{proposition}{Proposition}{Propositions}

\crefname{lemma}{Lemma}{Lemmas}
\Crefname{lemma}{Lemma}{Lemmas}

\crefname{corollary}{Corollary}{Corollaries}
\Crefname{corollary}{Corollary}{Corollaries}

\crefname{definition}{Definition}{Definitions}
\Crefname{definition}{Definition}{Definitions}

\crefname{assumption}{Assumption}{Assumptions}
\Crefname{assumption}{Assumption}{Assumptions}

\crefname{remark}{Remark}{Remarks}
\Crefname{remark}{Remark}{Remarks}

\crefname{condition}{Condition}{Conditions}
\Crefname{condition}{Condition}{Conditions}

\crefname{algorithm}{Algorithm}{Algorithms}
\Crefname{algorithm}{Algorithm}{Algorithms}

\crefname{example}{Example}{Examples}
\Crefname{example}{Example}{Examples}

\crefname{notation}{Notation}{Notations}
\Crefname{notation}{Notation}{Notations}

\crefname{openproblem}{Open Problem}{Open Problems}
\Crefname{openproblem}{Open Problem}{Open Problems}

\crefname{problem}{Problem}{Problems}
\Crefname{problem}{Problem}{Problems}

\crefname{conjecture}{Conjecture}{Conjectures}
\Crefname{conjecture}{Conjecture}{Conjectures}

\crefname{counterexample}{Counterexample}{Counterexamples}
\Crefname{counterexample}{Counterexample}{Counterexamples}

\crefname{axiom}{Axiom}{Axioms}
\Crefname{axiom}{Axiom}{Axioms}

\crefname{question}{Question}{Questions}
\Crefname{question}{Question}{Questions}

\newcommand{\absval}[1]{\lvert #1 \rvert}

\newcommand{\norm}[1]{\lVert #1 \rVert}

\newcommand{\Norm}[1]{\left\Vert #1 \right\Vert}

\numberwithin{equation}{section}
\numberwithin{figure}{section}
\numberwithin{table}{section}

\newcolumntype{C}[1]{>{\centering\arraybackslash}m{#1}}

\automark[section]{section}
\setkomafont{pageheadfoot}{\normalcolor\sffamily}
\setkomafont{pagenumber}{\normalfont\normalsize\sffamily}
\clearpairofpagestyles
\let\oldtitle\title
\renewcommand{\title}[1]{\oldtitle{#1}\newcommand{\theshorttitle}{#1}}
\newcommand{\shorttitle}[1]{\renewcommand{\theshorttitle}{#1}}
\let\oldauthor\author
\renewcommand{\author}[1]{\oldauthor{#1}\newcommand{\theshortauthor}{#1}}
\newcommand{\shortauthor}[1]{\renewcommand{\theshortauthor}{#1}}
\cohead{\xrfill[0.525ex]{0.6pt}~\theshorttitle~\xrfill[0.525ex]{0.6pt}}
\cehead{\xrfill[0.525ex]{0.6pt}~\theshortauthor~\xrfill[0.525ex]{0.6pt}}
\newcommand{\theabstract}[1]{\par\bgroup\noindent\textbf{\textsf{Abstract.}} #1\egroup}
\newcommand{\thekeywords}[1]{\par\smallskip\bgroup\noindent\textbf{\textsf{Keywords.}}\newcommand{\and}{ $\bullet$ } #1\egroup}
\newcommand{\themsc}[1]{\par\smallskip\bgroup\noindent\textbf{\textsf{2020 Mathematics Subject Classification.}}\newcommand{\and}{ $\bullet$ } #1\egroup}

\newcommand*{\affilref}[1]{\ref{affiliation#1}}
\newcommand*{\affiliation}[3]{
	\footnotetext[#1]{\label{affiliation#2}#3}
}

\usepackage{adjustbox}
\usepackage{tikz}
\usetikzlibrary{bayesnet,er,trees,shapes.symbols,mindmap,arrows,decorations,shapes.misc,shapes.arrows,chains,matrix,positioning,scopes,decorations.pathmorphing,patterns}
\usepackage{tikz-cd} 

\definecolor{nisblue}{RGB}{220,235,250}
\definecolor{nisgreen}{RGB}{220,245,225}
\definecolor{nisorange}{RGB}{255,235,210}

\setlist{topsep=0.3ex, itemsep=0.3ex}

\title{Error Bounds for Importance Sampling with Estimated Proposal Distributions}
\shorttitle{Error Bounds for Importance Sampling with Estimated Proposal Distributions}
\author{%
	Cathrine~Aeckerle-Willems\textsuperscript{\affilref{UM_old}}%
	\and
	Ilja~Klebanov\textsuperscript{\affilref{FUB}}%
	\and
	Simon~Weissmann\textsuperscript{\affilref{UM}}%
}

\shortauthor{C.~Aeckerle-Willems, I.~Klebanov, S.~Weissmann}
\date{\today}

\begin{document}
\maketitle
\affiliation{1}{UM_old}{Independent Researcher, formerly at Universit\"at Mannheim, D-68131 Mannheim, Germany (\email{cathrine.aeckerle@gmail.com})}
\affiliation{2}{FUB}{Freie Universit{\"a}t Berlin, Arnimallee 6, 14195 Berlin, Germany (\email{klebanov@zedat.fu-berlin.de})}
\affiliation{3}{UM}{Universit\"at Mannheim, D-68131 Mannheim, Germany (\email{simon.weissmann@uni-mannheim.de})}

\begin{abstract}\small
	\theabstract{
    Importance sampling with data-driven proposal distributions is widely used in practice. A common workflow first generates an auxiliary sample of size $N$ from an approximation of the target distribution, constructs a density estimate $\hat q$ such as a kernel density estimator (KDE), and then draws $n$ importance samples from this learned proposal. Despite its practical relevance, the theoretical properties of this hierarchical procedure remain poorly understood, since classical importance sampling theory assumes a fixed proposal.

    We address this gap by deriving non-asymptotic error bounds for standard, defensive, and self-normalized importance sampling estimators with random proposals. Our results separate the Monte Carlo error, scaling as $n^{-1/2}$, from the proposal approximation error measured through the mean integrated absolute and squared errors ($\MIAE$ and $\MISE$) of $\hat q$. To obtain explicit convergence rates in $(N,n)$, we establish $\MIAE$ and $\MISE$ bounds for KDEs constructed from geometrically ergodic Markov chains in stationary and non-stationary regimes. Combining these results yields quantitative guarantees for importance sampling with KDE-based proposals. Our theory provides practical guidance for selecting defensive mixture weights in a nonparametric importance sampling framework.
    }  
	
	\thekeywords{{Importance sampling}%
        \and
        {data-driven proposal distributions}
        \and
        {non-asymptotic error bounds}
        \and
        {defensive mixtures}
        \and%
		{kernel density estimation}%
		\and%
		{Markov chain Monte Carlo}%
	}

	\themsc{{62G07}%
		\and%
		{65C05}%
		\and%
		{65C40}%
        \and%
        {62G05}%
	}
\end{abstract}

\section{Introduction}
\label{sec:Introduction}

A central task in Bayesian inference is the computation of expectations under a posterior distribution.
Let $P\in\cP(\R^d)$ be a target probability measure with strictly positive Lebesgue density $p$, and let
\[
\I(f)\coloneqq \E_{P}[f] = \int_{\R^d} f(x)\, p(x) \, \d x
\]
for a measurable function $f$.
In many realistic models, direct sampling from $P$ is infeasible: the density is typically known only up to a normalizing constant, may concentrate on low-dimensional regions, and can be multi-modal.
Markov chain Monte Carlo (MCMC) provides a general-purpose solution \citep{Robert2004MC,Brooks2011MCMC}, but the resulting dependence and potentially slow mixing can make it expensive to estimate expectations accurately and reliably quantify their uncertainty.

Importance sampling (IS; \citep{owen_monte_2013,Robert2004MC}) provides an alternative approach in which expectations under $P$ are estimated using samples drawn from a proposal distribution $Q$ and reweighted by the Radon--Nikodym derivative $\rho = \frac{\rd P}{\rd Q}$.
Given independent and identically distributed (i.i.d.) samples $X_1,\dots,X_n\sim Q$, the estimator
\[
\I_n(f)\coloneqq \frac1n\sum_{i=1}^n \rho(X_i)\,f(X_i)
\]
is unbiased for $\I(f)$ under $P\ll Q$ and appropriate integrability.
In many applications, the normalizing constant of $p$ is unknown, and one instead uses the self-normalized importance sampling (SNIS) estimator based on normalized importance weights \citep{owen_monte_2013}.
The practical success of IS (and SNIS) hinges on the choice of $Q$: if $Q$ assigns too little mass to regions where $p$ is large, importance weights can become extremely variable and the estimator can be unstable or even have infinite variance \citep{owen_monte_2013,veach1995optimally}.
This difficulty is especially pronounced in Bayesian problems, where the posterior mass may be highly concentrated.

\begin{figure}[t]
\centering
\begin{tikzpicture}[
    scale=0.96,
    transform shape,
    node distance=2cm,
    box/.style={
        draw,
        rounded corners,
        align=center,
        minimum height=1.1cm,
        minimum width=2cm,
        font=\small
    },
    mainbox/.style={box, fill=nisblue},
    auxbox/.style={box, fill=nisgreen},
    defbox/.style={box, fill=nisorange},
    arrow/.style={->, thick},
    optarrow/.style={->, thick, dashed}
]

\node[mainbox] (target) {\shortstack{Target $P$,\\ density $p$}};

\node[auxbox, below = 1.3cm of target] (aux)
{\shortstack{Auxiliary\\sample $Z_{1:N}$\\(iid/MCMC)}};

\node[defbox, right=of aux] (kde)
{\shortstack{KDE\\proposal\\$\hat q = \hat q_N$}};

\node[auxbox, right=of kde] (is)
{\shortstack{IS sample\\$X_{1:n}\sim\hat Q$}};

\node[mainbox, right=of is] (est)
{\shortstack{IS estimator\\$\mathcal I_n(f)$}};

\node[defbox, above=1.3cm of kde] (def)
{\shortstack{Defensive\\mixture\\$\hat q = \hat q_{N,\delta_N}$}};

\draw[arrow] (target) -- node[left, align=center, font=\small]{approx.\\draws} (aux);
\draw[arrow] (aux) -- node[below, align=center, font=\small]{density\\estimation} (kde);
\draw[arrow] (kde) -- node[below, font=\small]{sampling} (is);
\draw[optarrow] (def) -- node[above right, font=\small]{sampling} (is);
\draw[arrow] (is) -- node[below, font=\small]{estimation} (est);

\draw[optarrow] (kde) -- node[left, align=center, font=\small]{optional\\stabilization} (def);

\end{tikzpicture}

\caption{Two-stage importance sampling pipeline with estimated proposal distribution. Auxiliary samples are used to learn a proposal via kernel density estimation. Optionally, a defensive mixture stabilizes the proposal before importance sampling and estimation of $I(f)$.}
\label{fig:nis_pipeline}
\end{figure}

In this work we study an IS approach in which the proposal is \emph{learned from data}.
Specifically, we consider the two-stage pipeline illustrated in \Cref{fig:nis_pipeline}: an auxiliary sample $Z_1,\dots,Z_N$, approximately distributed according to $P$ (e.g.\ i.i.d.\ samples or generated by an MCMC algorithm targeting $P$), is used to construct a data-driven proposal $\hat Q$ with density $\hat q$, and subsequently $X_1,\dots,X_n\sim \hat Q$ are drawn to form an IS estimator.
A canonical choice is to take $\hat q$ to be a kernel density estimator (KDE) \cite{Silverman1986KDE,WandJones1995}.
Such two-stage constructions are common in Bayesian workflows \citep{Martino2017LAIS,papaioannou_improved_2019,uribe2021}, where pilot runs or previous simulations provide approximate samples from the target distribution.
A key feature of this setting is the presence of \emph{two interacting sample sizes}: 
\begin{itemize}
\item the \emph{proposal sample size} $N$, used to learn $\hat q$, and
\item the \emph{importance sampling size} $n$, used to estimate $\I(f)$.
\end{itemize}
This hierarchical structure leads to a two-sample regime in which the overall accuracy depends jointly on the proposal sample size $N$ and the importance sampling size $n$, as well as on how these quantities are balanced.
Understanding this interaction is crucial for both theory and practice. 

Despite its practical appeal, the theoretical properties of IS with learned proposal distributions remain poorly understood, especially in the presence of dependent auxiliary data \citep{Botev2013,delyon21,schuster21}.
Two challenges arise in the analysis of this pipeline.
First, the proposal $\hat Q$ is random, so classical IS theory for fixed proposals does not directly apply.
Second, KDE-based proposals can be inaccurate in the tails, which may lead to unstable importance weights.
To address this, we also consider a \emph{defensive} construction in which the learned proposal is mixed with a fixed reference density $\varphi$,
\begin{equation}
\label{equ:defensive_KDE_mixture_intro}
\hat q_{N,\delta_N}
\coloneqq
(1-\delta_N)\hat q_N + \delta_N \varphi,
\qquad \delta_N \in (0,1),
\end{equation}
ensuring stability of the importance weights while retaining flexibility \citep{Hesterberg1995Defensive,Owen2000safe}.

\paragraph*{Contributions}
The goal of this paper is to quantify how the error incurred when estimating the proposal distribution propagates into the error of the resulting IS estimator, and to derive explicit non-asymptotic guarantees that capture the interplay between $N$ and $n$. The main contributions are as follows.

\begin{enumerate}[label=(C\arabic*),leftmargin=3em]

\item
\label{item:IS_error_bounds_random_proposal}
\textbf{Non-asymptotic error bounds for IS with random proposals.}
We develop a general theory for IS with data-driven (random) proposals, including standard, defensive, and self-normalized defensive estimators.
Our bounds explicitly separate the Monte Carlo error, scaling as $n^{-1/2}$, from the proposal approximation error, quantified via 
the mean integrated absolute error ($\MIAE$) and mean integrated squared error ($\MISE$), providing a general mechanism for translating density estimation accuracy into IS accuracy in hierarchical settings.
In particular, we establish explicit variance and error bounds under transparent integrability conditions, quantify how the defensive weight $\delta_N$ trades off robustness and approximation accuracy, and characterize the resulting variance inflation in terms of the proposal quality.

\item
\label{item:MISE_MIAE_bounds_KDE_MCMC}
\textbf{New $\MIAE$ and $\MISE$ bounds for KDEs based on MCMC data.}
We derive bounds for the $\MIAE$ and $\MISE$ of KDEs constructed from both i.i.d.\ samples and Markov chains.
For geometrically ergodic chains we obtain explicit finite-sample rates, including new $\MIAE$ bounds and quantitative $\MISE$ bounds that capture the effect of dependence even in non-stationary regimes.
These results form a key ingredient in our IS analysis and are of independent interest in their own right.

\item \textbf{Explicit rates in the two-sample regime $(N,n)$.}
By combining our IS bounds from \ref{item:IS_error_bounds_random_proposal} with the KDE error estimates from \ref{item:MISE_MIAE_bounds_KDE_MCMC}, we obtain explicit convergence rates that depend jointly on $N$ and $n$.
These results make precise how the estimation error of the KDE-based (possibly defensive) proposal interacts with the Monte Carlo error, and provide guidance for choosing the sample sizes $(N,n)$, the bandwidth $h_N$, and the defensive weight $\delta_N$. We illustrate our theoretical findings on a simple one-dimensional example with a Laplace target distribution and a defensive KDE proposal using a Gaussian kernel and a Cauchy defensive component.
\end{enumerate}

\paragraph*{Outline}
After reviewing related work in \Cref{sec:RelatedWork}, we introduce the hierarchical sampling framework and notation in \Cref{sec:Notation}.
In \Cref{sec:bounds_is_estimators} we develop general non-asymptotic bounds for IS with random proposals, including defensive and self-normalized variants.
\Cref{sec:kde_mcmc_mise} derives $\MIAE$ and $\MISE$ bounds for KDEs based on i.i.d.\ and Markov chain data.
These results are combined in \Cref{sec:kde_is} to obtain explicit convergence rates in the two-sample regime $(N,n)$ and practical guidance for parameter selection.
\Cref{sec:numerics} illustrates the theoretical results on a simple numerical example.

\section{Related Work}
\label{sec:RelatedWork}
Importance sampling is a classical Monte Carlo technique for estimating expectations with respect to a target distribution by sampling from an alternative proposal distribution.
Its efficiency critically depends on the choice of the proposal, and a large body of literature studies how to construct proposals that reduce the variance of the resulting estimator.
Early theoretical analyses characterize the optimal proposal distribution that minimizes the estimator variance, but this optimal distribution typically depends on the unknown quantity being estimated and is therefore not directly available in practice.
For general references on IS we refer to, e.g.,~\cite{Agapiou2017IS,owen_monte_2013,Robert2004MC}.

A substantial line of work focuses on constructing proposal distributions by optimizing over parametric families. A common strategy is to minimize a divergence between the proposal and the optimal importance sampling distribution. In particular, minimizing the Kullback--Leibler divergence leads to convex optimization problems when the proposal family belongs to the exponential family. This idea forms the basis of the cross-entropy method~\cite{rubinstein_cross_2004}, which sequentially updates the parameters of a proposal distribution using samples generated from intermediate distributions. The cross-entropy framework has been widely studied, with numerous algorithmic refinements and theoretical analyses~\cite{beh2025insight,demange-chryst2024variational,el_masri2024,geyer_cross_2019,homem-de-mello_study_2007,ELMASRI2021,papaioannou_improved_2019,uribe2021}.

Another line of work constructs proposal distributions from samples generated by Markov chain Monte Carlo (MCMC), an approach commonly referred to as Markov chain importance sampling (MCIS)~\cite{Botev2013}. In MCIS, a Markov chain targeting a suitable distribution first generates an auxiliary sample from which a density estimator is fitted to approximate the target distribution, and this estimate is then used as a proposal for a subsequent importance sampling stage. A closely related algorithmic framework is layered adaptive importance sampling (LAIS) \citep{Martino2017LAIS}, which combines a population of MCMC chains with an importance sampling stage, resulting in a similar two-level sampling structure.

Various density estimators have been proposed for constructing proposal distributions in this context.
In the rare-event simulation literature, parametric approaches include Gaussian mixture models~\citep{ehre2024steinvariationalrareevent,wagner22} and von Mises--Fisher--Nakagami mixture models~\citep{beh2025affineinvariantinteractinglangevin,papaioannou_improved_2019,uribe2021}.
Low-rank mixture models based on probabilistic principal component analysis have recently been proposed for high-dimensional settings~\cite{kruse2025scalableimportancesamplinghigh}.
Semi-parametric constructions based on conditional density factorizations were introduced in~\cite{Botev2013}.
Alternative approaches fit parametric models whose parameters maximize the variance of the log-density, motivated by properties of the optimal importance sampling distribution~\cite{chan_improved_2012}. A variant that exploits rejected proposals in Metropolis--Hastings and related algorithms via importance sampling was recently proposed by~\cite{schuster21} and~\cite{Rudolf2020}. The authors of \cite{schillings2020convergence} study the use of the Laplace approximation as an importance sampling proposal and establish high-probability convergence of the self-normalized estimator in the small-noise regime of Bayesian inverse problems.

Another line of work studies nonparametric approaches to importance sampling in which the optimal proposal distribution for a given integrand is approximated from data.
For example, \cite{zhang96} propose estimating the optimal importance sampling density using a kernel density estimator.
Their approach first constructs an importance sampling based estimator of the optimal proposal distribution and then uses this estimate in a second importance sampling stage.
Related ideas have been studied in \cite{neddermeyer2009computationally}, where both self-normalized and unnormalized estimators are considered and the kernel density estimator is replaced by a linear blend frequency polygon estimator, and in \cite{givens1996local}, where a localized variant based on kernel density estimation is proposed.
In all these works, the samples used to estimate the proposal distribution are generated as part of the algorithm itself, and the resulting proposal is tailored to a specific integrand through the corresponding optimal importance sampling density.
Our framework also applies when the auxiliary sample targets the optimal importance sampling distribution, or more generally another reference distribution different from $P$; see \Cref{rem:alternative_reference_distribution}.

For instance, other approaches learn proposals that approximate the target distribution itself.
The authors in \cite{delyon21} suggest a policy-based method that sequentially constructs a sequence of proposal distributions using IS.
Their policy is parameterized as a defensive mixture between a flexible kernel density estimator based on previously generated particles and a heavy-tailed reference density that ensures stability.
New particles are generated from the current proposal and then used to update the policy, resulting in an adaptive sequence of proposal distributions.
For this sequence of density approximations the authors establish a central limit theorem.
In contrast, the present work considers a setting in which the auxiliary samples used to construct the proposal are simply available, regardless of how they were generated.
These samples may arise, for example, from observations, previous simulations, or preliminary Monte Carlo runs.
We therefore focus on analyzing how the error of the resulting density estimator affects the accuracy of the subsequent IS estimator.
While the safe-adaptive proposal constructions in~\cite{delyon21} are related in spirit to the nonparametric proposals considered here, the theoretical focus is different: their analysis studies the asymptotic behavior of the sequence of approximating densities, whereas the present work quantifies how proposal estimation error propagates into the error of the resulting IS estimator.

Despite these developments, theoretical results that explicitly account for the randomness of data-driven proposals and quantify the impact of proposal estimation error on the performance of the resulting IS estimator remain limited.
The present work addresses this gap by providing a non-asymptotic analysis of IS with learned proposal distributions.

\section{Setup and Notation}
\label{sec:Notation}

Throughout the paper, $(\Omega,\Sigma,\bP)$ denotes an underlying probability space.
Let $\cP(\R^d)$ denote the space of probability measures on $(\R^d,\cB(\R^d))$, equipped with the Borel $\sigma$-algebra induced by the topology of weak convergence, that is, the $\sigma$-algebra generated by the evaluation maps
$\mathrm{ev}_B\colon \mu\mapsto \mu(B)$, $B\in\cB(\R^d)$.
The target distribution is denoted by $P\in\cP(\R^d)$ and is assumed to admit a strictly positive Lebesgue density
$p\colon \R^d\to\R_{>0}$.
As proposal distribution we consider random probability measures $\hat{Q}$ on $\R^d$, equipped with its Borel $\sigma$-algebra $\cB(\R^d)$. 
More precisely, the random proposal is a $\cP(\R^d)$-valued random variable, which we assume to be measurable with respect to a sub-$\sigma$-algebra $\cF\subseteq\Sigma$, with (random) Lebesgue density $\hat q$.
Throughout, we assume that $P \ll \hat Q$ almost surely.
Since the proposal $\hat Q$ is random, our analysis repeatedly uses conditional expectations of the form
\[
\E \big[ g(X_{i}) \mid \hat Q \big]
=
\int g\, \d \hat Q.
\]
This identity is justified by the universal sampling construction stated in
\Cref{sec:universal_sampling}, and throughout the paper we assume samples from $\hat Q$ to be realized in this way.

Our goal is to approximate expectations of the form $\I(f)\coloneqq \E_{P}[f]$ for $f\in L^2(P)$ using IS with proposal $\hat Q$.
Conditionally on $\hat Q$, let $X_1,\dots,X_n$ be i.i.d.\ samples from $\hat Q$.
The corresponding IS estimator is
\begin{equation}
\label{equ:IS_estimator}
\I_n(f)
\coloneqq
\frac1n\sum_{i=1}^n \hat\rho(X_i)\,f(X_i),
\qquad
\text{where}
\quad
\hat\rho \coloneqq \frac{\rd P}{\rd \hat Q} = \frac{p}{\hat q}
\end{equation}
is the corresponding (random) Radon--Nikodym derivative.
In the applications of interest, the proposal density $\hat{q}$ is constructed from auxiliary samples $Z_1,\dots,Z_N\in\R^d$ that are approximately distributed according to $P$ (either i.i.d.\ or obtained from a Markov chain targeting $P$).
In particular, we focus on kernel density estimators of the form
\begin{equation}\label{eq:our_is_proposal_density}
\hat q_N
\coloneqq
\frac{1}{N}\sum_{k=1}^N K_h(Z_k,\cdot),
\qquad
K_h(x,y)
\coloneqq
\frac{1}{h^d} \, K \left(\frac{x-y}{h}\right),
\end{equation}
where $K \in L^{1}(\bR^{d})$ is a kernel and $h = h_N$ is the KDE bandwidth.
In both cases, $\cF = \sigma(Z_1,\dots,Z_N)$ denotes the $\sigma$-algebra generated by the auxiliary sample.
To quantify the quality of the learned proposal, we use the mean integrated absolute error ($\MIAE$) and mean integrated squared error ($\MISE$):
\[
\MIAE(\hat q_N)
\coloneqq
\E\int_{\R^d} \big|\hat q_N(x)-p(x)\big|\,\d x,
\qquad
\MISE(\hat q_N)
\coloneqq
\E\int_{\R^d} \big(\hat q_N(x)-p(x)\big)^2\,\d x.
\]
These quantities naturally control the discrepancy between $p$ and $\hat q_N$ and will enter our IS error bounds.
Moreover, as will be discussed in \Cref{rem:relation_kl_tv}, they can themselves be controlled by classical divergences such as the total variation and Kullback--Leibler distances, which commonly arise in density estimation, variational inference, and approximate Bayesian computation.

\begin{table}[!t]
\centering
\caption{Overview of the random proposal classes considered in the paper and the corresponding assumptions and results.
Here, $\hat q_0$ denotes an arbitrary proposal density, $\varphi$ a strictly positive defensive mixture component, and $\delta,\delta_N\in(0,1)$ the corresponding mixture weights.}
\label{tab:proposal_types}
\renewcommand{\arraystretch}{1.35}
\begin{tabular}{C{4.0cm}  C{4.5cm}  C{5.5cm}}
\toprule
\textbf{Random proposal  $\hat q$}
&
\textbf{General}
&
\textbf{$N$-dependent}
\\
\midrule

\textbf{Non-defensive}
&
\makecell[c]{
$\hat q$ arbitrary \\
\Cref{assump:basic_is}\\ \Cref{thm:basic_random_is}
}
&
\makecell[c]{
$\hat q=\hat q_N$ KDE defined in \eqref{eq:our_is_proposal_density}\\
\Cref{thm:is_kde_final_rates}\,\ref{item:thm_is_kde_final_rates_stationary}\,\ref{item:thm_is_kde_final_rates_stationary_nondefensive}
and
\ref{item:thm_is_kde_final_rates_nonstationary}\,\ref{item:thm_is_kde_final_rates_nonstationary_nondefensive}
}
\\
\midrule
\makecell[c]{
\textbf{Defensive mixture}
}
&
\makecell[c]{
$\hat{q} = \hat q_\delta=(1-\delta)\hat q_0+\delta\varphi$
\\
\Cref{assump:safe_is}\\
\Cref{thm:variance_bound_for_safe_is,thm:error_bound_for_clipped_safe_snis}
}
&
\makecell[c]{
$\hat{q} = \hat q_{N,\delta_N} = (1-\delta_N)\hat q_N + \delta_N \varphi$
\\
\Cref{assump:N_dependent_safe_is}\\ \Cref{cor:error_bound_for_N_dependent_safe_is_and_snis}
\\[1.5ex]
KDE specialization: $\hat q_N$ as in \eqref{eq:our_is_proposal_density}\\
\Cref{thm:is_kde_final_rates}\,\ref{item:thm_is_kde_final_rates_stationary}\,\ref{item:thm_is_kde_final_rates_stationary_defensive}
and
\ref{item:thm_is_kde_final_rates_nonstationary}\,\ref{item:thm_is_kde_final_rates_nonstationary_defensive}
}
\\
\bottomrule
\end{tabular}
\end{table}

To improve tail robustness, we also consider the defensive mixture proposals of the form \eqref{equ:defensive_KDE_mixture_intro}.
\Cref{tab:proposal_types} summarizes the proposal classes considered in this paper together with the corresponding assumptions and main results.
All upper bounds in these results are understood in the extended real sense and may take the value $+\infty$.

\begin{remark}[Alternative reference distributions]
\label{rem:alternative_reference_distribution}
Throughout the paper we formulate our results for the setting in which the auxiliary sample
$Z_1,\dots,Z_N$ approximately targets the distribution $P$ itself, so that the learned proposal
$\hat q_N$ is constructed as an approximation of the target density $p$, with approximation quality measured in terms of $\MIAE(\hat q_N)$ and $\MISE(\hat q_N)$.
However, the analysis extends directly to settings in which the auxiliary sample targets a different reference distribution $\tilde P$ with density $\tilde p$.
Indeed, defining $\tilde f(x) = fp/\tilde{p}$, we obtain
\[
\I(f)
=
\E_{P}[f]
=
\int_{\R^d} f(x)\, p(x)\,\d x
=
\int_{\R^d} f(x)\,\frac{p(x)}{\tilde p(x)}\,\tilde p(x)\,\d x
=
\E_{\tilde P}[\tilde f].
\]
Consequently, all results in the paper remain valid after replacing $(P,p,f)$ by $(\tilde P,\tilde p,\tilde f)$.
This includes, for instance, settings in which the auxiliary sample targets the optimal importance sampling distribution, which is known to minimize the variance of the corresponding IS estimator \cite{Robert2004MC}, or a smoothed version of it, as is common in rare event estimation \citep{beh2025affineinvariantinteractinglangevin,papaioannou_improved_2019,uribe2021,wagner22}. 
\end{remark}

\section{General bounds for IS with random proposals}
\label{sec:bounds_is_estimators}

In this section, we develop non-asymptotic error and variance bounds for IS when the proposal distribution is random.
We work throughout under the hierarchical framework introduced in \Cref{sec:Notation}, where a random proposal $\hat Q$ is first constructed from auxiliary data and the IS estimator is formed from samples drawn conditionally on $\hat Q$.

Our analysis separates two distinct sources of error:

\begin{itemize}
\item the \emph{Monte Carlo error} due to the finite importance sampling size $n$, and
\item the \emph{proposal approximation error} induced by replacing the target density $p$ with the learned density $\hat q$, which will be quantified in terms of $\MIAE(\hat q)$ and $\MISE(\hat q)$.
\end{itemize}

We proceed in three steps.
First, in \Cref{sec:setting_1a}, we analyze the unbiased IS estimator under minimal assumptions on the random proposal.
Second, \Cref{sec:setting_1b} introduces a defensive mixture construction that guarantees tail robustness and yields explicit variance bounds.
Finally, \Cref{sec:self_normalized_extension} discusses the practically important self-normalized estimator.

\begin{remark}[Relation to KL and total variation distances]
\label{rem:relation_kl_tv}
The bounds in \Cref{thm:basic_random_is,thm:variance_bound_for_safe_is,thm:error_bound_for_clipped_safe_snis} are expressed in terms of $\MIAE$ and $\MISE$, which can in turn be controlled by more classical discrepancy measures between the target density $p$ and the learned proposal $\hat q_N$, such as (expected) total variation and Kullback--Leibler distances:
\begin{align*}
\MIAE(\hat q_N)
&= 2\,\E\left[\operatorname{TV}(p,\hat q_N)\right]
\;\le\;
\sqrt{2\, \E\left[\KL(p\|\hat q_N)\right]},
\end{align*}
where we used Pinsker's inequality and Jensen's inequality.
Moreover, if $p$ and $\hat q_N$ are bounded and $\hat{S}_{N} \coloneqq \sup_{x\in\R^d} p(x) + \sup_{x\in\R^d} \hat q_N(x)$, then
\begin{align*}
\MISE(\hat q_N)
&\le
2 \, \E\left[ \hat{S}_{N}\, 
\operatorname{TV}(p,\hat q_N)\right]
\le
\E\left[ \hat{S}_{N}\, 
\sqrt{2 \, \KL(p\|\hat q_N)}\right].
\end{align*}
Consequently, the bounds derived in this section can be combined with methods that control divergences such as $\KL(p\|\hat q_N)$ or $\operatorname{TV}(p,\hat q_N)$, connecting our IS error bounds directly to objectives commonly used in modern density estimation and approximate inference.
Similar bounds can be obtained for the Hellinger distance, since it is equivalent to the total variation distance.
\end{remark}

\subsection{Importance sampling with random proposals}
\label{sec:setting_1a}

We begin with the basic hierarchical IS setting introduced in \Cref{sec:Notation}, which we summarize in the following assumption.

\begin{assumption}[Importance sampling with random proposal]
\label{assump:basic_is}
Let $P \in \cP(\R^d)$ be a probability distribution with strictly positive Lebesgue density
$p \colon \R^d \to \R_{>0}$.
Let $\hat Q$ be a $\cP(\R^d)$-valued random variable with (random) Lebesgue density $\hat q$, measurable with respect to a sub-$\sigma$-algebra $\cF \subseteq \Sigma$, such that $P \ll \hat Q$ almost surely.
Conditionally on $\hat Q$, let $X_1,\dots,X_n$ be conditionally independent with common distribution $\hat Q$.
Let $Z \sim P$ be an auxiliary random variable independent of $\cF$.
\end{assumption}

We analyze the properties of the corresponding IS estimator \eqref{equ:IS_estimator}.
While the estimator is conditionally unbiased, its accuracy depends on both the importance sampling size $n$ and the quality of the random proposal $\hat Q$.
The following theorem summarizes these properties.

\begin{theorem}[Basic properties of IS with a random proposal]
\label{thm:basic_random_is}
Let $f \in L^2(P)$. Under \Cref{assump:basic_is}, the estimator \eqref{equ:IS_estimator} satisfies:

\begin{enumerate}[label=(\alph*)]

\item \textbf{Unbiasedness: }
\label{item:basic_random_is_unbiased}
$\E[\I_n(f) | \hat Q] = \I(f)$ almost surely.
In particular, $\E[\I_n(f)] = \I(f)$.

\vspace{1ex}

\item \textbf{Variance bound: }
\label{item:basic_random_is_variance}
$\displaystyle
\Var[\I_n(f)]
\le
\frac{1}{n}
\left(
\Var[f(Z)]
+
\int_{\R^d}
\E\bigl[|\hat\rho(x)-1|\bigr] \,
f(x)^2 \, p(x)\,\mathrm{d}x
\right).
$

\vspace{1ex}

\item \textbf{Non-asymptotic error bounds:}
\label{item:basic_random_is_error}
\begin{align*}
\E\bigl[|\I_n(f)-\I(f)|\bigr]
&\le
\frac{\|f\|_{L^2(P)}}{\sqrt{n}}
\Big(1
+
(4 n \MIAE(\hat q))^{1/3}\Big)^{3/2}
=
\mathcal{O}\big(
n^{-1/2}
+
\MIAE(\hat q)^{1/2}
\big).
\end{align*}
\end{enumerate}
\end{theorem}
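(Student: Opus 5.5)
Parts (a) and (b) follow from conditioning on $\hat Q$ (equivalently on $\cF$), using the identity $\E[g(X_i)\mid\hat Q]=\int g\,\d\hat Q$ from \Cref{sec:universal_sampling}. Part (c) cannot come from (b) directly, because the weighted term in (b) is not controlled by $\MIAE(\hat q)$ for unbounded weights. For (c) I would therefore use a truncation of the importance weights at a level $c>1$ and optimize over $c$.

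\textbf{(a) and (b).} For (a), conditionally on $\hat Q$,
\[
\E[\hat\rho(X_i)f(X_i)\mid\hat Q]=\int \frac{p}{\hat q}f\,\hat q\,\d x=\I(f),
\]
which is justified since $P\ll\hat Q$ a.s. and $f\in L^1(P)$. The tower property then gives $\E[\I_n(f)]=\I(f)$. For (b), I would use the law of total variance. The conditional mean is constant, so $\Var(\E[\I_n(f)\mid\hat Q])=0$. Conditional independence gives
\[
\Var[\I_n(f)\mid\hat Q]=\tfrac1n\Bigl(\int\hat\rho f^2p\,\d x-\I(f)^2\Bigr).
\]
Next I bound $\hat\rho\le 1+|\hat\rho-1|$ and note $\int f^2p-\I(f)^2=\Var[f(Z)]$, where $Z\sim P$. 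Taking expectations and applying Tonelli to swap $\E$ with $\int$ (the integrand is nonnegative) gives the claimed bound.

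\textbf{(c), truncation split.} Fix $c>1$ and write $\hat\rho=\min(\hat\rho,c)+(\hat\rho-c)_+$. This splits $\I_n(f)-\I(f)$ into two pieces. The first is the centered clipped average
\[
\frac1n\sum_i\bigl(\min(\hat\rho,c)f\bigr)(X_i)-\int\min(p,c\hat q)f\,\d x .
\]
The second consists of $\frac1n\sum_i((\hat\rho-c)_+f)(X_i)$ together with the deterministic term $\int(p-c\hat q)_+f\,\d x$.

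For the clipped piece, conditional Jensen and independence give the bound
\[
\Bigl(\tfrac1n\int\min(\hat\rho,c)^2f^2\hat q\,\d x\Bigr)^{1/2}\le\Bigl(\tfrac1n\int\min(\hat\rho,c)\,f^2p\,\d x\Bigr)^{1/2}\le\sqrt{c/n}\,\|f\|_{L^2(P)} .
\]
For the second piece, each of the two terms has conditional absolute expectation at most $\int(p-c\hat q)_+|f|\,\d x$. By Cauchy--Schwarz with weight $p$ and the pointwise bound $(p-c\hat q)_+\le p$, this is at most $\|f\|_{L^2(P)}\bigl(\int(p-c\hat q)_+\,\d x\bigr)^{1/2}$.

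\textbf{(c), the key pointwise comparison.} On the set $\{p>c\hat q\}$ we have $p-\hat q\ge(1-1/c)p$. Hence
\[
(p-c\hat q)_+\le p\,\mathbf 1\{p>c\hat q\}\le\tfrac{c}{c-1}(p-\hat q)_+ .
\]
Since $\int(p-\hat q)_+=\tfrac12\int|p-\hat q|$ (both are densities), Jensen's inequality for the square root gives
\[
\E[|\I_n(f)-\I(f)|]\le\|f\|_{L^2(P)}\Bigl(\sqrt{c/n}+2\sqrt{\tfrac{c}{2(c-1)}\MIAE(\hat q)}\Bigr).
\]
A sharper variant, cutting at $\hat\rho>c$ with the bound $p\mathbf 1\{\cdot\}$ used directly, may be needed to get the exact constants.

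\textbf{Main obstacle.} The remaining, and most delicate, step is choosing $c$ as a function of $n\MIAE(\hat q)$, roughly $c-1\asymp (n\MIAE)^{1/3}$, so that the resulting expression collapses to exactly $n^{-1/2}\|f\|_{L^2(P)}\bigl(1+(4n\MIAE(\hat q))^{1/3}\bigr)^{3/2}$. This form is what a minimization of the type $\min_t\bigl[(1+t)/n+a/t^2\bigr]$ produces. I expect to have to rebalance the two terms, for instance by squaring and using $(u+v)^2\le(1+t)u^2+(1+1/t)v^2$, to land on that precise constant. The $\mathcal O(n^{-1/2}+\MIAE^{1/2})$ statement then follows from $(1+x)^{3/2}\lesssim 1+x^{3/2}$.
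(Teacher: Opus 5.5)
Your parts (a) and (b) follow the paper's argument. For (c) your route is correct and differs from the paper's in two ways.

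First, the paper does not prove the conditional estimate itself. It takes from the proof of Chatterjee--Diaconis the bound $\E[|\I_n(f)-\I(f)|\mid\cF]\le\|f\|_{L^2(P)}\bigl(\sqrt{v/n}+2\sqrt{\P(\hat\rho(Z)>v\mid\cF)}\bigr)$. You instead derive the analogous estimate from scratch by clipping the weights. Your tail term $\bigl(\int(p-c\hat q)_+\bigr)^{1/2}$ is at most $\P(\hat\rho(Z)>c\mid\cF)^{1/2}$.

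Second, the paper bounds $\P(\hat\rho(Z)>v)\le\frac{v}{v-1}\MIAE(\hat q)$ by Markov's inequality applied to $|\hat q(Z)-p(Z)|/p(Z)$. You compare with $(p-\hat q)_+$ instead and use $\int(p-\hat q)_+=\frac12\int|p-\hat q|$. This is a genuine gain. Your $c$-dependent bound reads $\|f\|_{L^2(P)}\bigl(\sqrt{c/n}+b\sqrt{c/(c-1)}\bigr)$ with $b=\sqrt{2\MIAE(\hat q)}$, whereas the paper has $b=2\sqrt{\MIAE(\hat q)}$. So your argument is self-contained and slightly sharper, while the paper's is shorter because it cites.

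The step you flag as the main obstacle is a one-line substitution, with no squaring or rebalancing needed. Take $g(c)=\sqrt{c}/a+b\sqrt{c/(c-1)}$ with $a=\sqrt n$, and set $s=(ab)^{2/3}$ and $c=1+s$, which is the minimizer. Then $g(c)=\sqrt{1+s}\,\bigl(a^{-1}+bs^{-1/2}\bigr)$. Since $bs^{-1/2}=a^{-1/3}b^{2/3}=a^{-1}s$, this gives $g(c)=a^{-1}(1+s)^{3/2}$.

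With your $b$ the result is $n^{-1/2}\|f\|_{L^2(P)}\bigl(1+(2n\MIAE(\hat q))^{1/3}\bigr)^{3/2}$, which lies below the stated bound. Equivalently, take $c=1+(4n\MIAE(\hat q))^{1/3}$ and use that your $b$ is at most the paper's. If $\MIAE(\hat q)=0$, let $c\downarrow 1$.

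Two of your side remarks should be dropped:
\begin{itemize}
\item The heuristic of minimizing $(1+t)/n+A/t^2$ is not the relevant optimization; it produces a different functional form.
\item The ``sharper variant'' that cuts at $\{\hat\rho>c\}$ and uses $p\,\mathds{1}\{\cdot\}$ directly is essentially the paper's route, and it is weaker, not sharper. The constant $4$ in the statement arises precisely because the paper uses $|p-\hat q|$ rather than $(p-\hat q)_+$ in the Markov step.
\end{itemize}
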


\begin{proof}
For \ref{item:basic_random_is_unbiased}, the conditional independence of the $X_i$ and the conditional sampling identity from \Cref{sec:universal_sampling} imply
\begin{equation}
\label{equ:basic_random_is_unbiased}
\E[\I_n(f) \mid \hat Q]
=
\E[\hat\rho(X_1) f(X_1) \mid \hat Q]
=
\int \hat\rho f \,\mathrm d\hat Q
=
\int f \,\mathrm dP
=
\I(f).
\end{equation}
Taking expectations yields the unconditional statement.

\medskip
For \ref{item:basic_random_is_variance}, let $X \sim \hat Q$ conditionally on $\hat Q$.
By the conditional sampling identity from \Cref{thm:universal_sampling},
\begin{align*}
\E[(\hat\rho(X)f(X))^2 \mid \hat Q]
&=
\int (\hat\rho f)^2 \,\mathrm dQ
=
\int \hat\rho f^2 \,\mathrm dP
\le
\int |\hat\rho-1| f^2 \,\mathrm dP
+
\int f^2 \,\mathrm dP.
\end{align*}
Since $\E[\hat\rho(X)f(X)\mid \hat Q] = \I(f) = \bE[f(Z)]$ by the same calculation as in \eqref{equ:basic_random_is_unbiased}, the law of total variance implies
\begin{align*}
\Var[\hat\rho(X)f(X)]
&\le
\int \E[|\hat\rho-1|] f^2 \,\mathrm dP
+
\bE[f(Z)^{2}] - \bE[f(Z)]^{2}
\\ &=
\int \E[|\hat\rho-1|] f^2 \,\mathrm dP
+
\Var[f(Z)].
\end{align*}
By conditional independence of the $X_i$, $\Var[\I_n(f)] = \Var[\hat\rho(X)f(X)]/n$, which yields the claim.

\medskip
For \ref{item:basic_random_is_error}, let $v>1$ be arbitrary.
Using a bound from the proof of \citep[Theorem~1.1]{chadia18} and Jensen's inequality,
\begin{align*}
\E\bigl[|\I_n(f)-\I(f)|\bigr]
&=
\E\Big[\E\bigl[|\I_n(f)-\I(f)| \mid \cF \bigr]\Big]
\\
&\le
\E\Big[
\|f\|_{L^2(P)} \sqrt{\frac{v}{n}}
+
2\|f\|_{L^2(P)} \sqrt{\P(\hat\rho(Z)>v \mid \cF)}
\Big]
\\
&\le
\|f\|_{L^2(P)}
\left(
\sqrt{\frac{v}{n}}
+
2\sqrt{\E[\P(\hat\rho(Z)>v \mid \cF)]}
\right).
\end{align*}
By the law of total expectation, Markov's inequality, Jensen's inequality, and the independence of $Z$ and $\cF$,
\begin{align*}
\E[\P(\hat\rho(Z)>v \mid \cF)]
&=
\P(\hat\rho(Z)>v)
\\
&\le
\P \big( |\hat q(Z)-p(Z)|>(1-v^{-1})p(Z) \big)
\\
&\le
(1-v^{-1})^{-1} \, \bE\bigg[ \frac{|\hat q(Z)-p(Z)|}{p(Z)}\bigg]
\\
&=
\frac{v}{v-1}\,\MIAE(\hat q),
\end{align*}
which proves the $v$-dependent bound
\begin{align*}
\E\bigl[|\I_n(f)-\I(f)|\bigr]
&\le
\|f\|_{L^2(P)}
\left(
\sqrt{\frac{v}{n}}
+
2\sqrt{\frac{v}{v-1}\,\MIAE(\hat q)}
\right).
\end{align*}
This bound is minimized by optimizing the function
\[
g(v)
\coloneqq
\frac{\sqrt{v}}{a}+b \sqrt{\frac{v}{v-1}},
\qquad
a \coloneqq \sqrt{n},
\qquad
b \coloneqq 2 \sqrt{\MIAE(\hat q)},
\]
over $v>1$, which yields the unique minimizer $v_{\ast} = 1 + (ab)^{2/3}$.
Plugging $v_{\ast}$ into $g$ then gives the minimum
\[
g(v_\ast)
=
(a^{-1}+a^{-1/3}b^{2/3})\sqrt{1+(ab)^{2/3}}
=
a^{-1}\big(1+(ab)^{2/3}\big)^{3/2},
\]
finalizing the proof.
\end{proof}

The estimate in \Cref{thm:basic_random_is} separates the two fundamental error sources.
The term $n^{-1/2}$ corresponds to the usual Monte Carlo error, while the second term reflects the discrepancy between $p$ and the learned proposal $\hat{q}$ through $\MIAE(\hat q)$.
In particular, even for large $n$, the overall accuracy is limited by the quality of the proposal estimator.

In the deterministic-proposal case, i.e.\ when $\hat Q = Q$ almost surely, $\MIAE(\hat q)$ reduces to the integrated absolute error $\IAE(q) = \int_{\R^d} |q-p| = 2\,\operatorname{TV}(P,Q)$, and the bound can be viewed as an $L^1$-based counterpart of truncation-type arguments \citep{chadia18}.
Unlike classical variance bounds based on the $\chi^2$-divergence, it only requires $L^1$-closeness of $q$ to $p$, at the price of a residual term of order $\IAE(q)^{1/2}$ instead of a purely $n^{-1/2}$ rate.

This limitation, namely, that in the random-proposal setting the $n^{-1/2}$ rate may fail to hold uniformly due to the proposal approximation error, is further supported by the following lower bound.
It shows that the Monte Carlo rate cannot hold uniformly over the class of target distributions considered. In particular, for any $n\in\N$ there exists a target distribution $P_n$ such that the IS estimator based on a KDE suffers a constant error. This result is non-uniform: the target distribution may depend on $n$. Consequently, \Cref{prop:lowerbound} should be interpreted as a worst-case statement showing that \Cref{thm:basic_random_is} cannot, in general, be sharpened to a uniform $n^{-1/2}$ rate over a broad class of target distributions.

\begin{proposition}[Lower bound]\label{prop:lowerbound}
Let $d = 1$, $f(x) = x$ for $x \in \bR$ and $N \in \bN$ be arbitrary. Then there exists a sequence of probability measures $(P_{n})_{n\in\N}$ with second moments $\|f\|_{L^2(P_n)}^2$ that are uniformly bounded in $n \in \bN$,
associated random measures $\hat q_N^{(n)}$ and a constant $C = C(N) > 0$ such that, for each $n\in\bN$,
$\I(f) \coloneqq \bE_{P_{n}}[f] = 1$ is independent of $n$ and
\[
\E\bigl[|\I_n(f)-\I(f)|\bigr]
\ge
C \quad \text{for all}\ n\in\bN.
\]
Here, $\I_{n}$ is the IS estimator \eqref{equ:IS_estimator} under the target measure $P = P_{n}$ and $\hat{q} = \hat q_N$ is the KDE proposal given by \eqref{eq:our_is_proposal_density} with Gaussian kernel
\begin{align*}
K_h(x,y)&:= \frac1{\sqrt{2\pi h^2}} \exp\Big(-\frac{(x-y)^2}{2h^2}\Big)\, ,
\\ 
h^2
&=
h_N^{2}
\le
\hat V(Z_1,\dots,Z_N)
\coloneqq
\frac{1}{N-1} \sum_{i=1}^{N} (Z_i-\bar Z_N)^2\,,
\end{align*}
where $Z_1,\dots,Z_N \sim P_{n}$ are independent and
$\bar Z_N \coloneqq \frac1N\sum_{i=1}^NZ_i$.
\end{proposition}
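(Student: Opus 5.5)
We build the counterexample by hand. The idea is to take a two-component target whose second component carries all of the mean, $\I(f)=1$, but is rarely seen by the auxiliary sample. Fix $\varepsilon\in(0,1)$, say $\varepsilon=1/2$, and $b\coloneqq 1/\varepsilon$. Put
\[
P_n\coloneqq (1-\varepsilon)\,\mathcal N(0,\sigma_n^2)+\varepsilon\,\mathcal N(b,\sigma_n^2),
\]
with $\sigma_n\downarrow 0$ to be chosen. Then $\bE_{P_n}[f]=\varepsilon b=1$, and $\|f\|_{L^2(P_n)}^2=\varepsilon b^2+\sigma_n^2$ is uniformly bounded. The density is strictly positive and $P_n\ll\hat Q$, since the Gaussian KDE is positive everywhere. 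The constant $C$ may depend on $N$, so $\varepsilon$ can be fixed independently of $n$; only the width $\sigma_n$ has to shrink.

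\textbf{Step 1: the bad event.} Let $A$ be the event that all $Z_1,\dots,Z_N$ come from the left component and lie in $[-1/4,1/4]$. Then $\P(A)\ge (1-\varepsilon)^N-N\,\P(|\sigma_n\xi|>1/4)$ with $\xi\sim\mathcal N(0,1)$, which is at least $\tfrac12(1-\varepsilon)^N$ for $n$ large. On $A$ we have $h^2\le\hat V\le c/N$ for an absolute constant $c$, and in fact $\hat V\to0$ in probability as $\sigma_n\to0$. It is simplest to restrict $A$ further to the event $\{\hat V\le\sigma_n\}$, which has conditional probability tending to one. On this restricted $A$, every kernel in $\hat q_N$ is centred in $[-1/4,1/4]$ with variance at most $\sigma_n$. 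Hence
\[
\hat Q\bigl([b/2,\infty)\bigr)\le \exp\!\bigl(-(b/2-1/4)^2/(2\sigma_n)\bigr)\eqqcolon \eta_n .
\]
Choose $\sigma_n$ so small that $n\eta_n\to0$, for example $\sigma_n=1/(\log(n+2))^2$.

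\textbf{Step 2: the estimator misses the mean on $A$.} Let $R\coloneqq(-\infty,b/2)$ and let $B\coloneqq A\cap\{X_i\in R\ \text{for all } i\}$. Then $\P(B\mid\cF)\ge(1-n\eta_n)\mathbf 1_A$. On $B$ we bound $\I_n(f)$ by its positive part restricted to $R$, and take the conditional expectation using the sampling identity from \Cref{sec:universal_sampling}:
\[
\E\bigl[\I_n(f)\mathbf 1_B\bigr]\le \E\Bigl[\tfrac1n\textstyle\sum_i\hat\rho(X_i)f^+(X_i)\mathbf 1_R(X_i)\Bigr]=\displaystyle\int_R f^+\,\d P_n\le (1-\varepsilon)\,\sigma_n\,\E|\xi|+\varepsilon\,\P(\text{right comp.}\in R).
\]
Both terms on the right tend to $0$. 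Consequently
\[
\E|\I_n(f)-\I(f)|\ge \E\bigl[(1-\I_n(f))\mathbf 1_B\bigr]\ge \P(B)-o(1)\ge \tfrac12(1-\varepsilon)^N-o(1).
\]
So the bound exceeds $\tfrac14(1-\varepsilon)^N$ for all $n\ge n_0$.

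\textbf{Step 3: small $n$.} For the finitely many $n<n_0$, we simply redefine $P_n\coloneqq P_{n_0}$ and use the same bound. This works because $n\eta_{n_0}\le n_0\eta_{n_0}$, which can be made small, so the estimate from Step 2 still applies. We then take
\[
C\coloneqq\tfrac14(1-\varepsilon)^N .
\]

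The main obstacle is Step 1: making the KDE tail mass $\eta_n$ uniformly small on a set of probability bounded below. This requires the concentration statement $\hat V\le\sigma_n$ with high probability on $A$. That statement follows from Chebyshev applied to the sample variance, since $\E\hat V$ is of order $\sigma_n^2$ on the left component. The remaining steps are bookkeeping with Gaussian tails.
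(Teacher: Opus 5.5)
Your proof is correct and is essentially the paper's argument: a two-component Gaussian mixture with component widths shrinking like a negative power of $\log n$; the event that all $Z_k$ come from one component and have small sample variance, whose probability is bounded below by a multiple of $(1-\varepsilon)^N$ independently of $n$; and a Gaussian tail bound showing that the KDE then almost never sends an $X_i$ to the other mode, so the estimator misses the mean. The differences are only bookkeeping. You localize the $Z_k$ in $[-1/4,1/4]$, condition on $\cF$ and use a union bound, which needs $n\eta_n\to0$; this is why you need a faster decay of $\sigma_n$ and the small-$n$ patch in Step~3. You then bound $\E[\I_n(f)\mathbf 1_B]\le\int_R f^+\,\d P_n$, whereas the paper uses the sign argument $\{X_i\le 0\ \forall i\}\subseteq\{\I_n(f)\le 0\}$ together with the factor $(1-c/(n+1))^{n}$, which covers all $n$ at once.

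The side remark that $\hat V\le c/N$ on $A$ is false: since $|Z_i-\bar Z_N|\le 1/2$ there, you only get $\hat V\le N/(4(N-1))$. This is harmless, because you never use it and immediately restrict to $\{\hat V\le\sigma_n\}$.
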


\begin{proof}
The proof is given in \Cref{sec:proofs}.
\end{proof}

\subsection{Defensive importance sampling with random proposals}
\label{sec:setting_1b}

We now consider a defensive (safe) version of IS that stabilizes the weights by mixing the learned proposal with a fixed reference density \citep{Owen2000safe}.

\begin{assumption}[Defensive mixture proposal]
\label{assump:safe_is}
In addition to \Cref{assump:basic_is}, assume that the proposal density $\hat{q}$ has the form
\[
\hat{q}
=
\hat q_\delta
\coloneqq
(1-\delta)\hat q_0 + \delta \varphi,
\qquad \delta \in (0,1),
\]
where $\hat q_0$ is a random probability density and $\varphi$ is a deterministic probability density that is strictly positive on $\R^d$.
Further, let the integrand $f \in L^2(P)$ and the probability density $p$ satisfy the integrability conditions
\begin{equation}
\label{eq:intconditions}
\Norm{\frac{f^{2} \sqrt{p}}{\varphi}}_{L^2(P)} < \infty,
\quad
\Norm{\frac{\sqrt{p}}{\varphi}}_{L^2(P)} < \infty,
\quad
\int |p-\varphi| f^2 < \infty,
\quad
\MISE(\hat{q}_{0})
<
\infty.
\end{equation}
In particular, these conditions ensure that
\begin{align}
\label{equ:definition_sigma_PQ}
\sigma_{\delta}^{2}
&\coloneqq
\frac{1}{\delta} \MISE(\hat{q}_{0})^{1/2} \Norm{\frac{\sqrt{p}}{\varphi}}_{L^2(P)}
+
\frac{\delta}{(1-\delta)} \ <\  \infty\, ,
\\
\label{equ:definition_sigma_PQ_f}
\sigma_{\delta}^{2}(f)
&\coloneqq
\Var[f(Z)]
+
\frac{1}{\delta} \MISE(\hat{q}_{0})^{1/2} \Norm{\frac{f^{2} \sqrt{p}}{\varphi}}_{L^2(P)}
+
\frac{\delta}{(1-\delta)} \int \abs{p-\varphi} \, f^{2}
\ <\  \infty\, .
\end{align}
\end{assumption}

\begin{remark}[Sufficient tail condition]
\label{rem:tailassumption}
The first two integrability conditions in \eqref{eq:intconditions} can be ensured for $f \in L^4(P)$ via a suitable tail relation between $\varphi$ and $p$:
Suppose there exist constants $c_u, c_l, c_{\mathrm{tail}} > 0$ and a compact set $C \subset \R^d$ such that
\[
\max(\sup p, \sup \varphi) \le c_u < \infty,
\qquad
\varphi \ge c_l > 0 \text{ on } C,
\qquad
\varphi \ge \sqrt{c_{\mathrm{tail}}\,p} \text{ on } \R^d \setminus C.
\]
Such a relation is, for example, satisfied if $p$ has exponentially decaying tails while $\varphi$ is chosen with heavier (polynomial) tails, for instance a multivariate Cauchy density.
Then, for both $g = f$ and $g \equiv 1$, we obtain
\[
\Norm{\frac{g^{2} \sqrt{p}}{\varphi}}_{L^2(P)}^{2}
\ = \ 
\int \frac{g^4\, p^2}{\varphi^{2}}
\ \leq\ 
\int_{C}  \frac{c_{u}\, g^4\, p}{c_l^2}
+
\int_{\R^d\setminus C} \frac{g^4\, p}{c_{\mathrm{tail}}\,p}
\ \leq\ 
\bigg( \frac{c_{u}}{c_l^2} + \frac{1}{c_{\mathrm{tail}}} \bigg) \, \|g\|_{L^4(P)}^4
\ < \ 
\infty.
\]
\end{remark}

\begin{theorem}[Variance and error bounds for defensive importance sampling]
\label{thm:variance_bound_for_safe_is}
Under \Cref{assump:safe_is}, $\I_n(f)$ in \eqref{equ:IS_estimator}, based on the proposal $\hat{q} = \hat q_\delta$, is an unbiased estimator of $\mathcal{I}(f)$ and satisfies
\[
\Var[\I_n(f)]
\le
\frac{\sigma_{\delta}^{2}(f)}{n}\, ,
\qquad
\E\bigl[|\I_n(f)-\I(f)|\bigr]
\le
\frac{\sigma_{\delta}(f)}{\sqrt{n}}\, ,
\]
where $\sigma_{\delta}(f)$ is defined by \eqref{equ:definition_sigma_PQ_f}.
\end{theorem}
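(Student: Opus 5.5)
Unbiasedness follows exactly as in \Cref{thm:basic_random_is}\,\ref{item:basic_random_is_unbiased}, since \Cref{assump:safe_is} includes \Cref{assump:basic_is}. For the variance we start from the law of total variance, as in the proof of \Cref{thm:basic_random_is}\,\ref{item:basic_random_is_variance}. Because $\E[\hat\rho(X)f(X)\mid\hat Q]=\I(f)$ is deterministic, we have
\[
n\Var[\I_n(f)]=\E\Big[\int \hat\rho f^2\,\d P\Big]-\I(f)^2 = \Var[f(Z)] + \E\Big[\int (\hat\rho-1) f^2 p\Big].
\]
It therefore suffices to bound $\int \E[\hat\rho-1]\, f^2 p$ by the last two terms of $\sigma_\delta^2(f)$. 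The error bound then follows from Jensen's inequality: $\E|\I_n(f)-\I(f)|\le \Var[\I_n(f)]^{1/2}$.

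The key step is to split $\hat\rho-1=(p-\hat q_\delta)/\hat q_\delta$ using the defensive structure. We write
\[
p-\hat q_\delta=(1-\delta)(p-\hat q_0)+\delta(p-\varphi),
\]
and for each piece we use a different lower bound on the denominator. For the first piece we use $\hat q_\delta\ge \delta\varphi$, which gives
\[
\frac{(1-\delta)|p-\hat q_0|}{\hat q_\delta}\le \frac{|p-\hat q_0|}{\delta\varphi}.
\]
Integrating against $f^2p$, applying Cauchy--Schwarz in $x$ and then Jensen in expectation yields
\[
\E\int \frac{|\hat q_0-p|\,f^2 p}{\delta\varphi}\le \frac1\delta\,\E\Big[\|\hat q_0-p\|_{L^2}\Big]\,\Big\|\frac{f^2p}{\varphi}\Big\|_{L^2(\d x)}\le \frac1\delta \MISE(\hat q_0)^{1/2}\Big\|\frac{f^2\sqrt p}{\varphi}\Big\|_{L^2(P)}.
\]
Here we used the identity $\|f^2p/\varphi\|_{L^2(\d x)}=\|f^2\sqrt p/\varphi\|_{L^2(P)}$, which is exactly the quantity in \eqref{eq:intconditions}.

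For the second piece, the natural bound is $\hat q_\delta\ge(1-\delta)\hat q_0$. However, this would produce $\int |p-\varphi|\, f^2\, p/\hat q_0$, which is not controlled by the assumptions. The constant $\delta/(1-\delta)$ in \eqref{equ:definition_sigma_PQ_f}, multiplying $\int|p-\varphi|f^2$ with no extra factor $p$, suggests the intended route is different. One first bounds $\hat\rho f^2 p = f^2 p^2/\hat q_\delta$ and compares $\hat q_\delta$ with $p$ itself. Writing
\[
\hat q_\delta = (1-\delta)p+\delta\varphi+(1-\delta)(\hat q_0-p),
\]
the deterministic mixture $q_\delta^\ast=(1-\delta)p+\delta\varphi$ gives
\[
\frac{p^2}{q_\delta^\ast}-p=\frac{\delta p(p-\varphi)}{q_\delta^\ast}\le \frac{\delta}{1-\delta}|p-\varphi|,
\]
since $p/q_\delta^\ast\le 1/(1-\delta)$. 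This produces exactly the third term of $\sigma_\delta^2(f)$.

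The remaining random correction is
\[
\frac{p^2}{\hat q_\delta}-\frac{p^2}{q^\ast_\delta}=\frac{p^2(1-\delta)(p-\hat q_0)}{\hat q_\delta\, q^\ast_\delta}.
\]
We then use $q^\ast_\delta\ge(1-\delta)p$ to cancel one factor $p$ together with the $(1-\delta)$, and $\hat q_\delta\ge\delta\varphi$ for the other denominator. This gives the pointwise bound
\[
\frac{p^2}{\hat q_\delta}-\frac{p^2}{q^\ast_\delta}\le \frac{p\,|p-\hat q_0|}{\delta\varphi},
\]
which, after multiplication by $f^2$, is handled by the Cauchy--Schwarz/$\MISE$ estimate above. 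So the plan is this decomposition through $q^\ast_\delta$ rather than the naive split.

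The main obstacle is arranging the decomposition so that each term matches a quantity in \eqref{eq:intconditions}, in particular avoiding any $1/\hat q_0$ factor. Finiteness of all terms is guaranteed by \eqref{eq:intconditions}. Measurability and the Fubini exchange of $\E$ and $\int$ are justified by nonnegativity after taking absolute values.
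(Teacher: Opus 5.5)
Your proposal is correct and follows essentially the same route as the paper. The paper also inserts the deterministic mixture $(1-\delta)p+\delta\varphi$ between $p/\hat q_\delta$ and $1$, and bounds the two pieces pointwise by $|p-\hat q_0|/(\delta\varphi)$ and $\delta|p-\varphi|/((1-\delta)p)$ using your lower bounds $\hat q_\delta\ge\delta\varphi$ and $(1-\delta)p+\delta\varphi\ge(1-\delta)p$; it then concludes via Cauchy--Schwarz, Jensen and \Cref{thm:basic_random_is}\,\ref{item:basic_random_is_variance}. The only differences are cosmetic: you bound the signed quantity $(\hat\rho-1)p$ rather than $|\hat\rho-1|$, and you take a short detour through the naive split before settling on this decomposition.
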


\begin{proof}
Since
\begin{align*}
|\hat\rho_\delta-1|
&=
\abs{
\frac{p}{(1-\delta)\hat q_0+\delta\varphi}
-
\frac{p}{(1-\delta)p+\delta\varphi}
+
\frac{p}{(1-\delta)p+\delta\varphi}
- 1
}
\\
&\le
\frac{p \, (1-\delta) \, |p-\hat q_0|}{\delta \varphi (1-\delta) p }
+
\frac{\delta \absval{p-\varphi}}{(1-\delta) p}
\\
&=
\frac{|p-\hat q_0|}{\delta  \varphi}
+
\frac{\delta \absval{p-\varphi}}{(1-\delta) p},
\end{align*}
the Cauchy--Schwarz inequality and Jensen's inequality imply
\begin{align*}
\int \bE \big[ \abs{\hat{\rho_{\delta}} - 1} \big] \, f^{2} \, p     
&\leq
\int \bE\big[ \abs{p-\hat{q}_{0}} \big] \frac{f^{2} \, p}{\delta \, \varphi}
+
\frac{\delta}{(1-\delta)} \int \abs{p-\varphi} \, f^{2}
\\
&\leq
\frac{1}{\delta} \bigg( \int \bE\big[ \abs{p-\hat{q}_{0}} \big]^{2}\bigg)^{\!\! 1/2}
\bigg(\int \frac{f^{4} \, p^{2}}{\varphi^{2}} \bigg)^{\!\! 1/2}
+
\frac{\delta}{(1-\delta)} \int \abs{p-\varphi} \, f^{2}
\\
&\leq
\frac{1}{\delta} \MISE(\hat{q}_{0})^{1/2} \Norm{\frac{f^{2} \sqrt{p}}{\varphi}}_{L^2(P)}
+
\frac{\delta}{(1-\delta)} \int \abs{p-\varphi} \, f^{2}.
\end{align*}
The claim follows from \Cref{thm:basic_random_is}\ref{item:basic_random_is_variance}.
\end{proof}

\subsection{Defensive self-normalized importance sampling with random proposals}
\label{sec:self_normalized_extension}

In many applications the normalizing constant of the target distribution is unknown, so that the unbiased IS estimator \eqref{equ:IS_estimator} is not directly available. Instead of the target $p$ only an unnormalized version $\tilde{p} = c\, p$ with unknown constant $c > 0$ is accessible.
A common alternative is the self-normalized importance sampling (SNIS) estimator as well as its denominator-clipped version,
\begin{equation}
\label{eq:selfnormIS}
\mathcal I^{\mathrm{SN}}_{n}(f)
\coloneqq
\frac{\sum_{i=1}^{n} \frac{ \tilde p \,f}{\hat q}(X_i)}
{\sum_{i=1}^{n} \frac{\tilde p}{\hat q}(X_i)},
\qquad
I_n^{\mathrm{SN},\tau}(f)
\coloneqq
\frac{\sum_{i=1}^n \frac{\tilde p\,f}{\hat q}(X_i)}
{\max\big(\tau n\, , \, \sum_{i=1}^n \frac{\tilde p}{\hat q}(X_i)\big)},
\quad
0 < \tau < c.
\end{equation}
which replace the exact weights by normalized importance weights. While this removes the need to evaluate the normalizing constant, it introduces a bias and changes the variance behavior. In this subsection we extend the previous analysis to the self-normalized setting and identify how the randomness of the proposal affects the resulting error bounds.
As in the previous subsection, we work with the defensive proposal $\hat{q} = \hat q_{\delta}=(1-\delta)\hat q_{0}+\delta\varphi$ introduced in \Cref{assump:safe_is}.

\begin{theorem}[Error bounds for defensive self-normalized importance sampling]
	\label{thm:error_bound_for_clipped_safe_snis}
	Under \Cref{assump:safe_is}, the estimators $\mathcal I^{\mathrm{SN}}_{n}(f)$ and $\mathcal I^{\mathrm{SN},\tau}_{n}(f)$ in \eqref{eq:selfnormIS} based on the proposal $\hat{q} = \hat q_\delta$ are (possibly biased) estimators of $\mathcal{I}(f)$ satisfying
	\begin{align*}			
			\E\bigl[|\I_{n}^{\mathrm{SN}}(f)-\I(f)|\bigr]
			&\le
			\frac{1}{\sqrt{n}}
			\bigg(
			\sigma_{\delta}(f)
			+ 
			\sqrt{2} \, \sigma_{\delta}
			\Big(
			\norm{f}_{L^2(P)}
			+
			\bE \big[ \norm{f}_{L^4(\hat{Q})}^{4} \big]^{1/4} \sigma_{\delta}^{1/2}
			\Big)		
			\bigg)\, ,
			\\
			\label{equ:error_bound_for_clipped_safe_snis}
			\E\bigl[|\I_{n}^{\mathrm{SN},\tau}(f)-\I(f)|\bigr]
			&\le
			\frac{1}{\sqrt{n}}
			\left(
			\sigma_{\delta}(f)
			+			
			\frac{c^{2} \, \sigma_{\delta}}{\tau ( c - \tau )}
			\,
			\bigg( \absval{\I(f)} + \frac{\sigma_{\delta}(f)}{\sqrt{n}} \bigg)
			\right)\, ,
	\end{align*}
    where $\sigma_{\delta}$ and $\sigma_{\delta}(f)$ are defined by \eqref{equ:definition_sigma_PQ} and \eqref{equ:definition_sigma_PQ_f}.
\end{theorem}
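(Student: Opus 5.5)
Write $\hat\rho = p/\hat q_\delta$ as in \eqref{equ:IS_estimator} and set $A_n \coloneqq \I_n(f) = \frac1n\sum_i \hat\rho(X_i) f(X_i)$ and $B_n \coloneqq \I_n(1) = \frac1n\sum_i\hat\rho(X_i)$. Since $\tilde p = c\,p$, the constant cancels in the self-normalized estimator, so $\I_n^{\mathrm{SN}}(f) = A_n/B_n$. For the clipped estimator one gets $\I_n^{\mathrm{SN},\tau}(f) = cA_n/\max(\tau, cB_n)$. The plan rests on the decomposition
\[
\I_n^{\mathrm{SN}}(f)-\I(f) = \big(A_n - \I(f)\big) + \frac{A_n}{B_n}\,(1-B_n),
\]
and its clipped analogue
\[
\I_n^{\mathrm{SN},\tau}(f)-\I(f) = \big(A_n-\I(f)\big) + A_n\,\frac{c-\max(\tau,cB_n)}{\max(\tau,cB_n)} .
\]
In both cases the first term is handled directly by \Cref{thm:variance_bound_for_safe_is}, which gives $\E|A_n-\I(f)|\le \sigma_\delta(f)/\sqrt n$.

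\emph{Step 1: the denominator.} I apply \Cref{thm:variance_bound_for_safe_is} with $f\equiv 1$. Then $\Var[f(Z)]=0$, $\|\sqrt p/\varphi\|_{L^2(P)}$ appears in place of $\|f^2\sqrt p/\varphi\|_{L^2(P)}$, and $\int|p-\varphi|\le 2$. Hence $\sigma_\delta^2(1)\le 2\sigma_\delta^2$ by \eqref{equ:definition_sigma_PQ}, and, since $\E[B_n]=1$,
\[
\E[(1-B_n)^2]^{1/2}\le \sqrt2\,\sigma_\delta/\sqrt n .
\]
This explains the factor $\sqrt2\,\sigma_\delta$ in both bounds.

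\emph{Step 2: the clipped estimator.} Because $0<\tau<c$, the map $b\mapsto\max(\tau,b)$ is $1$-Lipschitz and equals $c$ at $b=c$. Hence $|c-\max(\tau,cB_n)|\le c|1-B_n|$ and $\max(\tau,cB_n)\ge\tau$, so the second term is bounded by $\frac{c}{\tau}|A_n||1-B_n|$. I then apply Cauchy--Schwarz, using
\[
\E[A_n^2]^{1/2}\le|\I(f)|+\sigma_\delta(f)/\sqrt n
\]
(unbiasedness plus the variance bound) together with Step 1. This yields a bound of the stated shape. The constant $\frac{c}{\tau}\le\frac{c^2}{\tau(c-\tau)}$ is already at most the stated one. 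If a sharper route is wanted, one can instead split on the event $\{cB_n\ge\tau\}$ and use Chebyshev on its complement, via $\P(cB_n<\tau)\le \P(|1-B_n|>1-\tau/c)$, which is where the factor $c/(c-\tau)$ naturally appears.

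\emph{Step 3: the unclipped estimator (main obstacle).} Here $1/B_n$ is not bounded, so a pure Cauchy--Schwarz argument fails. Instead I will exploit that $A_n/B_n=\sum_i w_i f(X_i)$ with normalized weights $w_i=\hat\rho(X_i)/\sum_j\hat\rho(X_j)$. I write $|A_n/B_n|\le |A_n| + |A_n/B_n|\,|1-B_n|$ inside the second term. The part $|A_n||1-B_n|$ is controlled by Cauchy--Schwarz as in Step 2, using $\E[A_n^2]^{1/2}\lesssim\|f\|_{L^2(P)}$ up to lower order; this gives the $\sqrt2\sigma_\delta\|f\|_{L^2(P)}$ contribution. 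For the remainder $|A_n/B_n|(1-B_n)^2$, I use Jensen for the weighted average, $|A_n/B_n|\le(\sum_i w_i f(X_i)^4)^{1/4}$, together with Hölder. The aim is to bound it by $\E[\|f\|^4_{L^4(\hat Q)}]^{1/4}$ times a power of $\E[(1-B_n)^2]\lesssim\sigma_\delta^2/n$. This would produce the term $\sqrt2\,\sigma_\delta^{3/2}\E[\|f\|^4_{L^4(\hat Q)}]^{1/4}/\sqrt n$.

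The delicate point is to match the precise exponents: one factor $|1-B_n|$ must be used only at the $L^2$ level, and one must avoid needing a fourth moment of $B_n$. I expect this to require a careful conditional (given $\cF$) Hölder step, possibly combined with the crude bound $\max_i |f(X_i)|^4\le\sum_i f(X_i)^4$ whose conditional expectation is $n\|f\|^4_{L^4(\hat Q)}$.
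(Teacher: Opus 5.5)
\textbf{Step 3 has a genuine gap.} Your unclipped argument is not a matter of tuning exponents. The split $|A_n/B_n|\le|A_n|+|A_n/B_n|\,|1-B_n|$ leaves the remainder $|A_n/B_n|(1-B_n)^2$, where an unbounded factor multiplies the \emph{square} of $1-B_n$. Your only handle on $|A_n/B_n|$ is $\max_i|f(X_i)|$. Pairing that with $(1-B_n)^2$ by H\"older against $\E[\max_i f(X_i)^4]$ requires $\E|1-B_n|^{8/3}$, a moment of the weights of order above two. Such moments are not controlled by $\sigma_\delta$ under \Cref{assump:safe_is}.

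The other piece also does not reproduce the statement. The bound $\E[A_n^2]^{1/2}\le|\I(f)|+\sigma_\delta(f)/\sqrt n$ is not dominated by $\norm{f}_{L^2(P)}$, so you would pick up an extra term of order $\sigma_\delta\,\sigma_\delta(f)/n$ that the stated bound does not contain.

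\textbf{The missing idea.} Do not split $|A_n/B_n|$ at all. Apply Cauchy--Schwarz to the whole product,
$\E[|\I_n^{\mathrm{SN}}(f)|\,|\bar W_n-1|]\le\E[\I_n^{\mathrm{SN}}(f)^2]^{1/2}\Var[\bar W_n]^{1/2}$.
Then control the second moment via Jensen for normalized weights, $\I_n^{\mathrm{SN}}(f)^2\le\I_n^{\mathrm{SN}}(f^2)$, and split on $E=\{\bar W_n\le 1/2\}$.
\begin{itemize}
\item Off $E$: $\I_n^{\mathrm{SN}}(f^2)\le 2\,\I_n(f^2)$, whose expectation is exactly $2\norm{f}_{L^2(P)}^2$ by unbiasedness. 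This uses a first moment of $\I_n(f^2)$, not a second moment of $\I_n(f)$, and that is why no remainder appears.
\item On $E$: use $\I_n^{\mathrm{SN}}(f^2)\le\max_i f(X_i)^2$, Cauchy--Schwarz, your bound $\E[\max_i f(X_i)^4]\le n\,\E[\norm{f}^4_{L^4(\hat Q)}]$, and Chebyshev $\P(E)\le 4\Var[\bar W_n]$. This gives $2\,\E[\norm{f}^4_{L^4(\hat Q)}]^{1/2}\sigma_\delta$.
\end{itemize}
Combining the two with $(2a+2b)^{1/2}\le\sqrt2(\sqrt a+\sqrt b)$ gives exactly the stated bound. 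So the $\sqrt2$ comes from this split, not from the denominator variance.

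\textbf{Step 1 is lossy, and this breaks your clipped constant.} Going through \Cref{thm:variance_bound_for_safe_is} with $f\equiv1$ gives only $\Var[\bar W_n]\le 2\sigma_\delta^2/n$. Step 2 then yields the constant $\sqrt2\,c/\tau$, which exceeds the stated $c^2/(\tau(c-\tau))$ whenever $\tau<(1-1/\sqrt2)\,c$. Your remark that $c/\tau$ lies below the stated constant overlooks this $\sqrt2$.

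The fix is to compute $\Var[\bar W_n]=\frac1n\E\big[\int p^2/\hat q_\delta-1\big]$ directly. Using $1/\hat q_\delta\le|p-\hat q_0|/(\delta\varphi p)+1/((1-\delta)p)$ gives $\E\int p^2/\hat q_\delta\le 1+\sigma_\delta^2$, hence $\Var[\bar W_n]\le\sigma_\delta^2/n$.

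With that correction your clipped argument is valid and actually sharper than the paper's. The $1$-Lipschitz bound $|c-\max(\tau,cB_n)|\le c|1-B_n|$ gives the constant $c/\tau$. The paper instead also splits off $D=\{\bar W_n\le\tau_c\}$ with $\tau_c=\tau/c$ and pays $\tau_c^{-1}+(1-\tau_c)^{-1}=c^2/(\tau(c-\tau))$. The Chebyshev route you call ``sharper'' is exactly that looser route.
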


\begin{proof}
Throughout the proof we work conditionally on $\cF := \sigma(\hat Q)$.
For notational convenience, we write
$\E^{\cF} \coloneqq \E[\cdot|\cF]$ and $\bV^{\cF} \coloneqq \bV[\cdot|\cF]$
and invoke \Cref{thm:universal_sampling} implicitly when passing to unconditional expectations.
First note that, since
\begin{align*}
\frac{1}{\hat q_\delta}
&\le
\left|
\frac{1}{(1-\delta)\hat q_0+\delta\varphi}
-
\frac{1}{(1-\delta)p+\delta\varphi}
\right|
+
\frac{1}{(1-\delta)p+\delta\varphi}
\le
\frac{(1-\delta)|p-\hat q_0|}{\delta \varphi (1-\delta) p}
+
\frac{1}{(1-\delta)p}.
\end{align*}
the Cauchy--Schwarz inequality and Jensen's inequality imply
\begin{equation}
\label{equ:thm_error_bound_for_clipped_safe_snis_first_bound}
\begin{split}
\bE\bigg[\int  \frac{p^{2}}{\hat{q}_{\delta}} \bigg]
&\leq
\frac{1}{\delta} \int \bE\big[ \abs{p-\hat{q}_{0}} \big] \frac{p}{\varphi}
\ +\ 
\frac{1}{(1-\delta)} \int p
\\
&\leq
\frac{1}{\delta} \bigg( \int \bE\big[ \abs{p-\hat{q}_{0}} \big]^{2} \bigg)^{1/2}
\bigg( \int \frac{p^{2}}{\varphi^{2}} \bigg)^{1/2}
+ 
\frac{1}{(1-\delta)}
\\
&\leq
\frac{1}{\delta} \MISE(\hat{q}_{0})^{1/2} \Norm{\frac{\sqrt{p}}{\varphi}}_{L^2(P)}
+
\frac{\delta}{(1-\delta)}
+ 1.
\end{split}
\end{equation}
Now define the i.i.d.\ random variables $W_{i} \coloneqq \frac{p}{\hat q} (X_i)$ as well as $\bar W_n \coloneqq n^{-1} \sum_{i=1}^n W_i$ such that
\[
\mathcal I_{n}(f)
=
\bar W_{n} \, \mathcal I_{n}^{\mathrm{SN}}(f),
\quad
\mathcal I_{n}(f)
=
\max(\tau_{c},\bar W_{n}) \, \mathcal I_{n}^{\mathrm{SN},\tau}(f),
\quad
\text{where}
\quad
\tau_{c} \coloneqq \frac{\tau}{c}\in (0,1).
\]
Since
\[
\bE^{\cF}[W_1]
=
\int \frac{p}{\hat q} \, \hat q
=
1,
\qquad
\bE^{\cF}[\bar W_n]
=
1,
\]
\eqref{equ:thm_error_bound_for_clipped_safe_snis_first_bound} implies
\begin{alignat*}{3}
\bV[\bar W_n]
\ &=\ 
\bE \big[ \bV^{\cF}[\bar W_n] \big] + \bV \big[ \bE^{\cF}[\bar W_n] \big]
&\ &=\ 
\frac{1}{n} \, \bE \big[ \bV^{\cF}[W_1] \big] + 0
\\
&=\ 
\frac{1}{n} \, \bE \bigg[ \int \frac{p^{2}}{\hat q^{2}} \, \hat q 
\ - \ 1
\bigg]
&\ &\leq\ 
\frac{1}{n} \, \bigg(
\frac{1}{\delta} \MISE(\hat{q}_{0})^{1/2} \Norm{\frac{\sqrt{p}}{\varphi}}_{L^2(P)}
+
\frac{\delta}{(1-\delta)}
\bigg)
\\
&=\ 
\frac{\sigma_{\delta}^{2}}{n}\, .
\end{alignat*}
For $D \coloneqq \{ \bar W_n \leq \tau_{c} \}$ and $E \coloneqq \{ \bar W_n \leq 1/2 \}$, Chebyshev's inequality and the Cauchy--Schwarz inequality give:
\begin{alignat*}{3}
    \bP(D)
    \ &\leq\ 
    \bP \big( \absval{\bar W_n - 1} \geq 1-\tau_{c} \big)
    &\ &\leq\ 
    \frac{ \bV[\bar W_n]}{(1-\tau_{c})^{2}}\, ,
    \\[2ex]
    \bP(E)
    \ &\leq\ 
    \bP \big( \absval{\bar W_n - 1} \geq 1/2 \big)
    &\ &\leq\ 4 \, \bV[\bar W_n] \, ,
    \\[2ex]
    \bE\big[ \I_{n}^{\mathrm{SN}}(f^2) \, \mathds{1}_{\Omega \setminus E}\big]
    \ &\leq\ 
    2\, \bE\big[ \I_{n}(f^2)\big]
    &\ &=\ 
    2 \, \bE_{P} \big[ f^2 \big],
    \\[2ex]
    \bE\big[ \I_{n}^{\mathrm{SN}}(f^2) \, \mathds{1}_{E}\big]
    \ &\leq\ 
    \bE\Big[ \max_{i=1,\dots,n}f^2(X_{i}) \, \mathds{1}_{E}\Big]
    &\ &\leq\ 
    \bE\Big[ \max_{i=1,\dots,n}f^4(X_{i})\Big]^{1/2} \, \bP(E)^{1/2}
    \\
    \ &\leq\ 
    2 \Big(\bE \Big[ n \, \bE^{\cF}\big[ f^4(X_{1}) \big] \Big] \, \bV[\bar W_n]  \Big)^{1/2}
    &\ &=\ 
    2 \, \bE \big[ \norm{f}_{L^4(\hat{Q})}^{4} \big]^{1/2} \, \sigma_{\delta}\, .
\end{alignat*}
Hence, the Cauchy--Schwarz inequality and Jensen's inequality yield
\begin{align*}
    \E\bigl[|\I_{n}(f)-\I_{n}^{\mathrm{SN}}(f)|\bigr]
    &=
    \E\bigl[|\I_{n}^{\mathrm{SN}}(f) \big(\bar{W}_{n} - 1\big)|\bigr]
    \\
    &\leq
    \E\bigl[|\I_{n}^{\mathrm{SN}}(f^2) | \bigr]^{1/2}
    \, \bV[\bar W_n]^{1/2}
    \\
    &\leq
    \Big(
    2 \, \bE_{P} \big[ f^2 \big]
    +
    2 \, \bE \big[ \norm{f}_{L^4(\hat{Q})}^{4} \big]^{1/2} \, \sigma_{\delta}
    \Big)^{\! 1/2}
    \, \bV[\bar W_n]^{1/2}
    \\
    &\leq
    \big(
    \norm{f}_{L^2(P)}
    +
    \bE \big[ \norm{f}_{L^4(\hat{Q})}^{4} \big]^{1/4} \sigma_{\delta}^{1/2}
    \big)
    \big( 2 \, \bV[\bar W_n]\big)^{1/2}.
    \intertext{Further, since
        $\absval{\max(\tau_{c},\bar{W}_{n}) - 1}
        \leq
        \absval{\bar{W}_{n} - 1} + \absval{\max(\tau_{c},\bar{W}_{n}) - \bar{W}_{n}}
        \leq
        \absval{\bar{W}_{n} - 1} + \tau_{c} \mathds{1}_{D}$,
        another application of the Cauchy--Schwarz inequality gives}
    \E\bigl[|\I_{n}(f)-\I_{n}^{\mathrm{SN},\tau}(f)|\bigr]
    &=
    \E\bigg[\absval{\I_{n}(f)} \abs{ \frac{\max(\tau_{c},\bar{W}_{n}) - 1}{\max(\tau_{c},\bar{W}_{n})}} \bigg]
    \\
    &\leq
    \tau_{c}^{-1}
    \E\big[\absval{\I_{n}(f)} \absval{ \bar{W}_{n} - 1 } \big]
    +
    \E\big[\absval{\I_{n}(f)} \, \mathds{1}_{D} \big]
    \\
    &\leq	
    \E\big[\I_{n}(f)^{2}\big]^{1/2} \,
    \Big(
    \tau_{c}^{-1}
    \bV[\bar{W}_{n}]^{1/2}
    +
    \bP(D)^{1/2}
    \Big)
    \\
    &\leq	
    \big( \Var[\I_n(f)] + \bE[\I_n(f)]^{2} \big)^{1/2} \, 
    \bV[\bar{W}_{n}]^{1/2}\, 
    \big(
    \tau_{c}^{-1}	
    +
    (1 - \tau_{c})^{-1}
    \big)
    \\
    &\leq	
    \Big( \Var[\I_n(f)]^{1/2} + \absval{\I(f)} \Big)\, \frac{c^{2}\, \bV[\bar{W}_{n}]^{1/2}}{\tau ( c - \tau )},
\end{align*}	
where we have used that $\I_n(f)$ is an unbiased estimator of $\I(f)$ (\Cref{thm:variance_bound_for_safe_is}).
The claim now follows from \Cref{thm:variance_bound_for_safe_is} and the triangle inequality (similarly for $\I_{n}^{\mathrm{SN},\tau}$)
\[
\E\bigl[|\I_{n}^{\mathrm{SN}}(f)-\I(f)|\bigr]
\le
\E\bigl[|\I_{n}^{\mathrm{SN}}(f)-\I_{n}(f)|\bigr] + \E\bigl[|\I_{n}(f)-\I(f)|\bigr]. 
\]	
\end{proof}

\begin{remark}[Clipping, implementability, and tail behavior]
	\label{rem:clipping_combined}
	
	The error bound for the standard SNIS estimator in
	\Cref{thm:error_bound_for_clipped_safe_snis} involves the term
	$\E[\|f\|_{L^4(\hat Q)}]$, which is undesirable because it imposes
	additional integrability requirements on the random proposal that are
	particularly restrictive for defensive proposals. This dependence is,
	however, largely unavoidable without clipping: controlling the event of a
	small random denominator $\bar W_n$ typically requires moment conditions
	strong enough to bound $\E[1/\bar W_n]$, and such conditions generally
	fail in defensive settings. For instance, with a Gaussian target and a
	heavy-tailed defense (e.g.\ Cauchy $\varphi$), the necessary higher-order
	moment assumptions need not hold. Denominator clipping provides a
	principled way to circumvent this difficulty while preserving the optimal
	$n^{-1/2}$ rate.
	
	\begin{enumerate}[label=(\roman*)]
		
		\item \textbf{Implementability and scale-free clipping.}
		The condition $0<\tau<c$ in \eqref{eq:selfnormIS} formally involves the
		unknown normalizing constant $c$. In practice this requirement should be
		interpreted as a scaling guideline. One may substitute a pilot estimator
		$\hat c$ of $c$ and, for instance, choose
		$\tau
		\coloneqq
		\tau_{0} \, 
		\hat{c}$ for some $\tau_{0} \in (0,1)$.		
				
		\item \textbf{Asymptotic inactivity of clipping.}
		If $\tau<c$ (equivalently $\tau_c \coloneqq \tau/c < 1$), then $\bar W_n\to 1$ in probability
		and hence $\P(\bar W_n\le \tau_c)\to 0$.
		If the importance weights are uniformly bounded (e.g.\ when
		$\tilde p/\varphi$ is bounded, as is typical in the defensive setting),
		Hoeffding's inequality even guarantees exponential decay.
		Thus, denominator clipping is asymptotically inactive and the clipped and
		standard SNIS estimators are asymptotically equivalent under
		\Cref{assump:safe_is}.
		
		\item \textbf{Relation to weight clipping.}
		An alternative stabilization replaces $W_i$ by $W_i\wedge M$ before
		normalization. While this controls large-weight variability, denominator
		clipping instead targets the complementary failure mode by preventing
		instability caused by unusually small empirical normalizing constants.
		Moreover, weight clipping introduces systematic bias and breaks the exact
		importance sampling identity, whereas denominator clipping leaves the
		weights unchanged and only modifies the estimator on the rare event
		$\{\bar W_n\le\tau_c\}$, which makes it particularly convenient for
		non-asymptotic analysis.
		
	\end{enumerate}
\end{remark}

\subsection{Consequences for $N$-dependent proposals}

The framework developed in the previous subsections applies to general random proposal distributions.
In the applications considered in this paper, however, the proposal is typically constructed from an auxiliary sample of size $N$, for instance through kernel density estimation, leading to a two-sample regime in which the overall accuracy depends jointly on the proposal sample size $N$ and the importance sampling size $n$.
The purpose of this subsection is to make the dependence on $N$ explicit and to quantify how convergence rates for the proposal estimator, expressed in terms of $\MISE(\hat q_N)$ as $N \to \infty$, translate into convergence rates for the resulting IS and SNIS estimators.
The following assumption formalizes this setting.
\begin{assumption}[$N$-dependent defensive mixture proposal]
\label{assump:N_dependent_safe_is}
In addition to \Cref{assump:basic_is,assump:safe_is}, assume that both $\hat{q}_{0} = \hat q_N$ and $\delta = \delta_{N} \in (0,1)$ may depend on a parameter $N \in \bN$.
In other words, the defensive mixture proposal takes the form \[
\hat{q}
=
\hat q_{N,\delta_N}
\coloneqq
(1-\delta_N)\hat q_N + \delta_N \varphi.
\]
\end{assumption}

\begin{corollary}
\label{cor:error_bound_for_N_dependent_safe_is_and_snis}
Under \Cref{assump:N_dependent_safe_is}, if $\MISE(\hat q_N) = \mathcal{O}(N^{-2 \eta})$ and $\delta_N \asymp N^{-\eta/2}$, then each of the estimators $\I_n^{\ast} \in \{ \I_n,\,  \I^{\mathrm{SN}}_{n},\, \I^{\mathrm{SN},\tau}_{n} \}$ defined in \eqref{equ:IS_estimator} and \eqref{eq:selfnormIS} satisfies
\begin{equation*}
\E\bigl[|\I_n^{\ast}(f)-\I(f)|\bigr]
\leq
\frac{1}{\sqrt{n}} \left(\Var[f(Z)]^{1/2} + \mathcal{O}\big( N^{-\eta/4} \big)\right),
\end{equation*}
where for $\I_n^{\ast} = \I^{\mathrm{SN}}_{n}$ we additionally require $\bE\big[ \norm{f}_{L^4(\hat{Q})}^{4} \big]$ to be finite.
\end{corollary}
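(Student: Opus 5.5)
The plan is to specialize \Cref{thm:variance_bound_for_safe_is,thm:error_bound_for_clipped_safe_snis} to $\hat q_0=\hat q_N$ and $\delta=\delta_N$. The only real work is tracking how $\sigma_{\delta_N}$ and $\sigma_{\delta_N}(f)$ from \eqref{equ:definition_sigma_PQ} and \eqref{equ:definition_sigma_PQ_f} scale in $N$. Write $A\coloneqq\Norm{\sqrt p/\varphi}_{L^2(P)}$, $A_f\coloneqq\Norm{f^2\sqrt p/\varphi}_{L^2(P)}$ and $B_f\coloneqq\int\abs{p-\varphi}f^2$. By \eqref{eq:intconditions} these are finite constants independent of $N$, since $\varphi$, $p$ and $f$ are fixed.

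\textbf{Step 1: order of $\sigma_{\delta_N}^2$.} From $\MISE(\hat q_N)=\mathcal O(N^{-2\eta})$ we get $\MISE(\hat q_N)^{1/2}=\mathcal O(N^{-\eta})$. Hence $\delta_N^{-1}\MISE(\hat q_N)^{1/2}=\mathcal O(N^{\eta/2}N^{-\eta})=\mathcal O(N^{-\eta/2})$. Since $\delta_N\asymp N^{-\eta/2}\to0$, we have $\delta_N\le 1/2$ for all large $N$, so $\delta_N/(1-\delta_N)\le 2\delta_N=\mathcal O(N^{-\eta/2})$. For the finitely many small $N$ the quantities are finite and can be absorbed into the constant. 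Therefore $\sigma_{\delta_N}^2=\mathcal O(N^{-\eta/2})$, i.e.\ $\sigma_{\delta_N}=\mathcal O(N^{-\eta/4})$. This choice of $\delta_N$ is exactly the one that balances the two terms.

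\textbf{Step 2: order of $\sigma_{\delta_N}(f)$.} The same estimates applied to the two $N$-dependent terms give $\sigma_{\delta_N}^2(f)=\Var[f(Z)]+\mathcal O(N^{-\eta/2})$, with the constants now involving $A_f$ and $B_f$. Using $\sqrt{a+b}\le\sqrt a+\sqrt b$, this yields $\sigma_{\delta_N}(f)\le\Var[f(Z)]^{1/2}+\mathcal O(N^{-\eta/4})$. Inserting this into \Cref{thm:variance_bound_for_safe_is} settles the case $\I_n^\ast=\I_n$. For later use, note that $\sigma_{\delta_N}(f)$ is bounded uniformly in $N$.

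\textbf{Step 3: the self-normalized estimators.} For $\I^{\mathrm{SN},\tau}_n$, the extra term in \Cref{thm:error_bound_for_clipped_safe_snis} is $\frac{c^2\sigma_{\delta_N}}{\tau(c-\tau)}\big(\abs{\I(f)}+\sigma_{\delta_N}(f)/\sqrt n\big)$. By Steps 1 and 2 this is $\mathcal O(N^{-\eta/4})$ uniformly in $n\ge1$, because the bracket is bounded by $\abs{\I(f)}+\sup_N\sigma_{\delta_N}(f)$. For $\I^{\mathrm{SN}}_n$, the extra term is $\sqrt2\,\sigma_{\delta_N}\big(\norm f_{L^2(P)}+\bE[\norm f_{L^4(\hat Q)}^4]^{1/4}\sigma_{\delta_N}^{1/2}\big)$. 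This is $\mathcal O(N^{-\eta/4})$ provided $\bE[\norm f_{L^4(\hat Q)}^4]$ stays bounded in $N$. I read the additional finiteness requirement in the statement in exactly this way, as a uniform-in-$N$ bound. It can also be partially verified via
\[
\bE\big[\norm f_{L^4(\hat Q)}^4\big]\le(1-\delta_N)\,\bE\Big[\int f^4\hat q_N\Big]+\delta_N\int f^4\varphi .
\]

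\textbf{Main obstacle.} This uniformity for the unclipped SNIS estimator is the one point I expect to need care: it is a genuine extra hypothesis and does not follow from $\MISE$ control alone. Everything else is bookkeeping of the exponent $\eta/4$ and of the eventual bound $\delta_N\le1/2$.
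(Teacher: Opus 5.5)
Your proposal is correct and follows the paper's argument, which inserts $\sigma_{\delta_N}^2=\mathcal{O}(N^{-\eta/2})$ and $\sigma_{\delta_N}^2(f)=\Var[f(Z)]+\mathcal{O}(N^{-\eta/2})$ into \Cref{thm:variance_bound_for_safe_is,thm:error_bound_for_clipped_safe_snis}. You also carry out the bookkeeping the paper leaves implicit: the $\sqrt{a+b}\le\sqrt a+\sqrt b$ step, eventually $\delta_N\le 1/2$, and uniformity in $n$. Reading the extra SNIS hypothesis as a bound uniform in $N$ is the right interpretation for an $\mathcal{O}$-statement; strictly, $\bE[\norm{f}_{L^4(\hat Q)}^4]=\mathcal{O}(N^{\eta/2})$ already suffices, because that term carries the factor $\sigma_{\delta_N}^{3/2}=\mathcal{O}(N^{-3\eta/8})$.
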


\begin{proof}
The claim follows directly from \Cref{thm:variance_bound_for_safe_is} and \Cref{thm:error_bound_for_clipped_safe_snis} since
\[
\sigma_{\delta}^{2}
=
\mathcal{O}\big( N^{-\eta/2} \big),
\qquad
\sigma_{\delta}^{2}(f)
=
\Var[f(Z)] + \mathcal{O}\big( N^{-\eta/2} \big).
\]
\end{proof}

\begin{remark}[Interpretation of \Cref{cor:error_bound_for_N_dependent_safe_is_and_snis}]
The bounds in \Cref{cor:error_bound_for_N_dependent_safe_is_and_snis} show that IS with an $N$-dependent defensive proposal (e.g.\ defensive KDE proposal) achieves essentially the same $\mathcal{O}(n^{-1/2})$ error rate as direct sampling from $P$ if $\MISE(\hat q_N) \to 0$ as $N \to \infty$.
The use of an approximate proposal manifests as a variance inflation term, which vanishes as the proposal sample size $N$ increases.
Consequently, for large $N$, the estimator behaves nearly as in the ideal Monte Carlo setting.
\end{remark}

\section{MIAE and MISE bounds for KDE with i.i.d.\ and Markov chain input}
\label{sec:kde_mcmc_mise}

In this section, we analyze the approximation quality of kernel density estimates (KDEs)
\begin{equation}
\label{eq:kde_def_general}
\hat q_N
\coloneqq
\frac{1}{N}\sum_{k=1}^N K_h(Z_k,\cdot),
\qquad
K_h(x,y)
\coloneqq
\frac{1}{h^d}  \, K \left(\frac{x-y}{h}\right),
\end{equation}
constructed from auxiliary samples $Z_1,\dots,Z_N \in \R^d$ drawn either
independently from $P$ or generated by a Markov chain with invariant
distribution $P$. Our goal is to derive bounds on the mean integrated
squared error (MISE) and the mean integrated absolute error (MIAE),
which quantify the accuracy of the resulting proposal densities in the
IS framework developed in the previous section.
Throughout this section, we impose the following regularity conditions
on the kernel $K$, bandwidth $h=h_N$, and target density $p$.

\begin{assumption}[Kernel, bandwidth and smoothness assumptions]
\label{ass:kde_rates}
Let $K\colon\R^d\to\R_{\geq 0}$ be a nonnegative function, let $h=h_N>0$ be a scalar
bandwidth depending on the number $N$ of auxiliary samples, and let $p$
denote the Lebesgue density of $P\in\cP(\R^d)$.
\begin{enumerate}[label=(\roman*)]

\item $K \in L^1(\R^d)\cap L^2(\R^d)$ satisfies
\[
\int_{\R^d} K(u)\,\rd u = 1,
\qquad
\int_{\R^d} u\,K(u)\,\rd u = 0,
\qquad
\int_{\R^d} \|u\|^2 \, |K(u)|\,\rd u < \infty.
\]

\item The bandwidth satisfies $h_N \to 0$ and $N h_N^d \to \infty$ as
$N\to\infty$.

\item $p \in W^{2,1}(\R^d) \cap W^{2,2}(\R^d)$, i.e.\ $p$ and its weak derivatives up to order $2$ are both integrable and square-integrable.
\end{enumerate}
\end{assumption}
In particular, by the Cauchy--Schwarz inequality, \Cref{ass:kde_rates} implies that $K \in L^1(P)$ and thereby $K_h(x,\cdot)\in L^1(P)$ for all $x\in\R^d$ and $h>0$. Both the MISE and MIAE of $\hat q_N$ admit a decomposition into a
stochastic fluctuation term and a deterministic smoothing bias term
(for the MISE this is the classical bias--variance decomposition, while for the MIAE it follows from the triangle inequality):
\begin{align}
\label{eq:MISE_decomposition}
\MISE(\hat q_N)
&=
\underbrace{
\int_{\R^d}
\bE\bigl[(\hat q_N(x) - \bE_P[K_h(x,\cdot)])^2\bigr]\rd x
}_{\text{integrated variance}}
+
\underbrace{
\int_{\R^d}
\bigl(\bE_P[K_h(x,\cdot)] - p(x)\bigr)^2\rd x
}_{\text{integrated squared bias}}.
\\
\label{eq:MIAE_decomposition}
\MIAE(\hat q_N)
&\le
\underbrace{
\int_{\R^d}
\bE\bigl[|\hat q_N(x)-\bE_P[K_h(x,\cdot)]|\bigr]\rd x
}_{\text{integrated mean absolute deviation}}
+
\underbrace{
\int_{\R^d}
|\bE_P[K_h(x,\cdot)]-p(x)|\rd x
}_{\text{integrated absolute bias}}.
\end{align}
The bias terms in \eqref{eq:MISE_decomposition}--\eqref{eq:MIAE_decomposition}
depend only on the smoothing properties of the kernel and are independent
of the sampling scheme used to generate the auxiliary sample. In contrast,
the stochastic terms depend on whether the $Z_k$ are i.i.d.\ or generated by a Markov chain.
In the following subsections, we derive bounds for
these stochastic terms in both settings and combine them with the bias
bounds from \Cref{lemma:KDE_bias} to obtain rates for the MISE and MIAE.

In the Markovian setting, we distinguish two regimes. In \Cref{sec:MC_KDE_stationary}, we consider a
stationary chain satisfying a spectral gap assumption (cf.\ \Cref{ass:spectral_gap}).
As expected, in this setting we obtain essentially the same stochastic KDE rates as in the i.i.d.\ case, up to constants depending on the mixing properties of the chain.
In \Cref{sec:MC_KDE_non_stationary}, we turn to a non-stationary setting under the weaker drift and minorization conditions of \Cref{ass:Markov_chain}, which yield fully non-asymptotic bounds for geometrically ergodic chains.
The resulting non-stationary bounds are more conservative in their dependence on the bandwidth: Rather than the classical i.i.d.\ rate $1/(N h^d)$, our variance estimate scales like $1/(N h^{2d})$, which is likely not optimal.
This stronger bandwidth dependence comes from the proof technique used here: our argument is based on the non-asymptotic bounds of \cite{Latuszynski2013}, which are formulated in terms of the $V^{1/2}$-weighted supremum norm (here $V$ denotes a drift function, cf.\ \Cref{ass:Markov_chain}).
Whether one can recover the sharper $1/(N h^d)$ scaling under this weaker assumption appears to be open.

\begin{remark}
For i.i.d.\ input, many parts of the results in this section are classical \citep{Tsybakov2009,scott2011multivariate,WandJones1995}.
In particular, the bias terms in \eqref{eq:MISE_decomposition}--\eqref{eq:MIAE_decomposition}, the integrated variance bound, and the resulting $\MISE$ rate belong to the standard theory of kernel density estimation.
By contrast, we are not aware of a reference that states the corresponding $\MIAE$ bound under assumptions directly comparable to ours, namely the polynomial tail condition \eqref{eq:poly_tail_p_and_weighted_K2_moment}.
While truncation arguments can be used to extend $L^1$-consistency results from compactly supported densities to general densities \cite{DevroyeGyorgy1985} they do not directly yield the quantitative $\MIAE$ rates considered here without additional control of the tails of $p$.
For this reason, we impose the polynomial tail condition \eqref{eq:poly_tail_p_and_weighted_K2_moment} when deriving $\MIAE$ bounds.
We provide proofs of all stated results for completeness. 
\end{remark}
Before moving to the stochastic terms, we provide bounds for the bias terms in \eqref{eq:MISE_decomposition}--\eqref{eq:MIAE_decomposition}.
\begin{proposition}[KDE bias bound]
	\label{lemma:KDE_bias}
	Suppose \Cref{ass:kde_rates} holds and let $K_h$ be given by \eqref{eq:kde_def_general}.
    Then the associated smoothing bias satisfies, for certain constants $C_{\mathrm{bias},1},C_{\mathrm{bias},2} > 0$ that depend only on \(K\) and the corresponding Sobolev norms of \(p\),
	\begin{align*}
	\int_{\R^d}
	\left|
	\bE_P[K_h(x,\cdot)] - p(x)
	\right| \, \rd x
	&\le
	C_{\mathrm{bias},1}\, h^2 \, ,
	\\
	\int_{\R^d}
	\left|
	\bE_P[K_h(x,\cdot)] - p(x)
	\right|^2 \, \rd x
	&\le
	C_{\mathrm{bias},2}\, h^4 \, .
	\end{align*}
\end{proposition}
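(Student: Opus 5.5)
We use the standard Taylor-expansion argument for the smoothing bias, carried out in integral form so that only Sobolev regularity of $p$ is needed. First rewrite the bias. Since $K_h(y,x) = h^{-d}K((y-x)/h)$, the substitution $y = x + hu$ gives
\[
\bE_P[K_h(x,\cdot)] - p(x) = \int_{\R^d} K(u)\,\bigl(p(x+hu) - p(x)\bigr)\,\rd u ,
\]
using $\int K = 1$. (The convention $K_h(x,y)=h^{-d}K((x-y)/h)$ produces $p(x-hu)$ instead; by symmetry of the argument this changes nothing below.)

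\textbf{Taylor expansion.} For $p \in C_c^\infty(\R^d)$, Taylor's formula with integral remainder gives
\[
p(x+hu) - p(x) = h\,u\cdot\nabla p(x) + h^2 \int_0^1 (1-t)\, u^\top \nabla^2 p(x+thu)\, u \,\rd t .
\]
The first-order term integrates to zero against $K(u)\,\rd u$, because $\int u K(u)\,\rd u = 0$. Hence
\[
b_h(x) := \bE_P[K_h(x,\cdot)] - p(x) = h^2 \int_{\R^d}\int_0^1 (1-t)\,K(u)\, u^\top \nabla^2 p(x+thu)\,u\,\rd t\,\rd u .
\]

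\textbf{$L^1$ bound.} Take the $L^1(\rd x)$ norm, move it inside the $u$- and $t$-integrals (Tonelli/Minkowski), and use translation invariance of Lebesgue measure, $\|\nabla^2 p(\cdot+thu)\|_{L^1} = \|\nabla^2 p\|_{L^1}$. This gives
\[
\|b_h\|_{L^1} \le \tfrac12\, h^2 \Bigl(\int \|u\|^2 |K(u)|\,\rd u\Bigr)\, \|\nabla^2 p\|_{L^1},
\]
where $\|\nabla^2 p\|$ is the operator norm of the Hessian. So $C_{\mathrm{bias},1} = \tfrac12 \mu_2(K)\,|p|_{W^{2,1}}$.

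\textbf{$L^2$ bound.} The same steps with Minkowski's integral inequality in $L^2(\rd x)$ give
\[
\|b_h\|_{L^2} \le \tfrac12\, h^2\, \mu_2(K)\, \|\nabla^2 p\|_{L^2}.
\]
Squaring yields $C_{\mathrm{bias},2} = \tfrac14 \mu_2(K)^2\, |p|_{W^{2,2}}^2$.

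\textbf{Density argument (main obstacle).} The only delicate step is removing the smoothness assumption, since $p$ is only assumed to lie in $W^{2,1}\cap W^{2,2}$. I would approximate $p$ by $p_m \in C_c^\infty$ converging to $p$ in both $W^{2,1}$ and $W^{2,2}$ (mollification plus cutoff). For fixed $h$, both sides of each inequality pass to the limit:

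1. The map $p \mapsto b_h$ is linear and bounded on $L^1$ and on $L^2$. Indeed, convolution with $K_h \in L^1$ is bounded by $\|K\|_{L^1}$ in every $L^r$, so the left-hand sides converge.
2. The right-hand sides involve only the Hessian seminorms of $p_m$, which converge by the choice of approximating sequence.

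Hence the inequalities extend to all $p$ in $W^{2,1}\cap W^{2,2}$. Only the hypotheses $\int K = 1$, $\int uK = 0$ and $\int \|u\|^2|K| < \infty$ from \Cref{ass:kde_rates}(i), together with (iii), are used, so the constants depend only on $K$ and the Sobolev seminorms of $p$, as claimed.
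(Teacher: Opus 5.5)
Your proposal is correct and follows essentially the same route as the paper. Both use a Taylor expansion with integral remainder for a smooth approximant, cancel the first-order term via $\int uK(u)\,\rd u=0$, apply Minkowski/Fubini with translation invariance to get the $h^2$ factor in $L^1$ and $L^2$, and then pass to the limit using that convolution with $K_h$ is bounded on $L^q$. The only cosmetic difference is the approximation: the paper uses plain mollification $p_\varepsilon=p*\rho_\varepsilon$ and Young's inequality $\|\partial_{ij}p_\varepsilon\|_{L^q}\le\|\partial_{ij}p\|_{L^q}$ to bound the right-hand side uniformly. It therefore only needs $p_\varepsilon\to p$ in $L^q$, rather than a $C_c^\infty$ sequence converging simultaneously in $W^{2,1}\cap W^{2,2}$.
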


\begin{proof}
The proof is classical and given in \Cref{sec:proofs} for completeness.
\end{proof}

\subsection{KDE with i.i.d.\ data and stationary Markov chains with spectral gap}
\label{sec:MC_KDE_stationary}
For the stochastic error terms in \eqref{eq:MISE_decomposition}--\eqref{eq:MIAE_decomposition}, we begin with the more favorable setting in which the chain is stationary and
admits a spectral gap. In this regime one expects the dependence to affect the
KDE only through constants, so that the stochastic error has the same order as in the i.i.d.\ case, which we include in our results for completeness.
The following result makes this precise at the level of integrated variance and integrated mean absolute deviation bounds. 

KDE for dependent observations has been studied under a variety of dependence assumptions. Many results establish consistency and convergence rates for KDEs based on stationary mixing processes, often recovering the classical i.i.d.\ rates up to constants depending on the dependence structure \citep{robinson1983nonparametric,masry1986recursive,hart1990data,Yu1993,HS2017}.
The closest work to our considered stationary setting is \citep{KIM2026108271}, where the authors study KDE for data coming from an MCMC method. In a one-dimensional setting they derive similar error bounds on the MISE for Harris ergodic Markov chains matching the i.i.d.~data setting.

\begin{assumption}[Stationary Markov chain with spectral gap]
	\label{ass:spectral_gap}    
    $Z_1,\dots,Z_N$ are
    i.i.d. samples from $P$, or generated by a stationary Markov chain $(Z_k)_{k\in\N}$ in $\R^d$ with invariant distribution $P$ and a spectral gap $\gamma > 0$ or a pseudo spectral gap $\gamma_{\mathrm{ps}}>0$.
\end{assumption}

\begin{theorem}[Stochastic KDE error for i.i.d.\ and stationary Markov input]
\label{thm:MarkovKDE_stationary}
Suppose that \Cref{ass:kde_rates,ass:spectral_gap} hold and that $\hat{q}_{N}$ is the KDE given by \eqref{eq:kde_def_general}.
Then there exists a constant $C_{\mathrm{mc}} > 0$ such that
\begin{equation}
\label{equ:thm_MarkovKDE_stationary_L2_bound}
\int_{\R^d}
\E\bigl[
\bigl(\hat q_N(x)-\bE_P[K_h(x,\cdot)]\bigr)^{2}
\bigr]\rd x
\leq
\frac{C_{\mathrm{mc}}}{N h^d}\,\|K\|_{L^2(\R^d)}^2
=
\mathcal O\bigg(\frac1{N h_{N}^d}\bigg).
\end{equation}
Moreover, if the kernel $K$ and Lebesgue density $p$ of $P$ satisfy
\begin{equation}
	\label{eq:poly_tail_p_and_weighted_K2_moment}
	M_{K,m}
	\coloneqq
	\int_{\R^d} (1+\|u\|)^m K(u)^2\,\rd u
	<\infty,
	\qquad
	p(x)\le C_p (1+\|x\|)^{-m},
	\quad
	x\in\R^d,
\end{equation}
for certain constants $C_p>0$ and $m>2d$, then $C_{1} \coloneqq \sqrt{C_{\mathrm{mc}}\,C_p\,M_{K,m}} \int_{\R^d}(1+\|x\|)^{-m/2}\,\rd x < \infty$ and, for $0 < h < 1$,
\begin{equation}
\label{equ:thm_MarkovKDE_stationary_L1_bound}
\int_{\R^d}
\E\bigl[
\bigl|\hat q_N(x)-\bE_P[K_h(x,\cdot)]\bigr|
\bigr]\rd x
\leq
\frac{C_{1}}{\sqrt{N}\,h^{d/2}}
=
\mathcal O \bigg(\frac1{\sqrt{N}\,h_{N}^{d/2}}\bigg).
\end{equation}
\end{theorem}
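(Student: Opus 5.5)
Fix $x\in\R^d$ and write $g_x(z)\coloneqq K_h(x,z)=h^{-d}K((x-z)/h)$, so that $\hat q_N(x)-\bE_P[K_h(x,\cdot)]=\frac1N\sum_{k=1}^N\bigl(g_x(Z_k)-\bE_P[g_x]\bigr)$. This is a centered ergodic average of one fixed function. The plan is to bound its second moment pointwise in $x$ by a constant times $N^{-1}\bE_P[g_x^2]$, and then integrate over $x$.

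\textbf{Pointwise variance bound.} In the i.i.d.\ case, $\E[(\cdot)^2]=N^{-1}\Var_P[g_x]\le N^{-1}\bE_P[g_x^2]$. For a stationary chain with spectral gap $\gamma$, I would expand the variance of the ergodic average:
\[
\Var\Bigl[\tfrac1N\textstyle\sum_k g_x(Z_k)\Bigr]=\tfrac1N\Var_P[g_x]+\tfrac{2}{N^2}\textstyle\sum_{j<k}\mathrm{Cov}(g_x(Z_j),g_x(Z_k)).
\]
Each covariance equals $\langle \bar g_x, P^{k-j}\bar g_x\rangle_{L^2(P)}$, where $\bar g_x$ is the centered version of $g_x$. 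Its absolute value is at most $(1-\gamma)^{k-j}\|\bar g_x\|_{L^2(P)}^2$. Summing the geometric series gives the bound $\frac{2}{\gamma N}\Var_P[g_x]$.

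Under a pseudo spectral gap I would instead cite the variance bound of Paulin (Theorem 3.2 there), which gives $\Var\le \frac{4}{\gamma_{\mathrm{ps}}N}\Var_P[g_x]$. I would need to check the exact constant.

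In all cases, $\E[(\hat q_N(x)-\bE_P[K_h(x,\cdot)])^2]\le \frac{C_{\mathrm{mc}}}{N}\bE_P[g_x^2]$ with $C_{\mathrm{mc}}\in\{1,2/\gamma,4/\gamma_{\mathrm{ps}}\}$.

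\textbf{Integrating for \eqref{equ:thm_MarkovKDE_stationary_L2_bound}.} By Fubini,
\[
\int \bE_P[g_x^2]\,\rd x=\int p(z)\int h^{-2d}K((x-z)/h)^2\,\rd x\,\rd z .
\]
Substituting $u=(x-z)/h$ turns the inner integral into $h^{-d}\|K\|_{L^2}^2$, and $\int p=1$. This yields \eqref{equ:thm_MarkovKDE_stationary_L2_bound}.

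\textbf{$L^1$ bound \eqref{equ:thm_MarkovKDE_stationary_L1_bound}.} Jensen gives $\E|\cdot|\le \bigl(\frac{C_{\mathrm{mc}}}{N}\bE_P[g_x^2]\bigr)^{1/2}$ pointwise. We cannot simply integrate the square root, since $\R^d$ has infinite measure. This is the main obstacle, and the tail condition \eqref{eq:poly_tail_p_and_weighted_K2_moment} is what resolves it.

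I would aim for the pointwise estimate $\bE_P[g_x^2]\le C_pM_{K,m}h^{-d}(1+\|x\|)^{-m}$ for $h<1$. Substituting $z=x-hu$ gives
\[
\bE_P[g_x^2]=h^{-d}\int K(u)^2\,p(x-hu)\,\rd u\le C_p h^{-d}\int K(u)^2(1+\|x-hu\|)^{-m}\,\rd u .
\]
The key inequality is $(1+\|x\|)\le(1+\|x-hu\|)(1+h\|u\|)\le(1+\|x-hu\|)(1+\|u\|)$, valid for $h<1$. It implies $(1+\|x-hu\|)^{-m}\le(1+\|x\|)^{-m}(1+\|u\|)^m$, which produces the factor $M_{K,m}$.

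Hence $\E|\cdot|\le\sqrt{C_{\mathrm{mc}}C_pM_{K,m}}\,N^{-1/2}h^{-d/2}(1+\|x\|)^{-m/2}$. Integrating over $x$ gives $C_1/(\sqrt N h^{d/2})$. The constant $C_1$ is finite because $\int(1+\|x\|)^{-m/2}\,\rd x<\infty$ precisely when $m/2>d$, that is, $m>2d$.
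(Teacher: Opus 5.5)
Your proposal is correct and follows the paper's proof essentially step by step. The steps are: the pointwise bound $\E[(\hat q_N(x)-\bE_P[K_h(x,\cdot)])^2]\le C_{\mathrm{mc}}N^{-1}\bE_P[g_x^2]$ via Paulin's Theorems~3.1/3.2; Fubini for the $L^2$ bound; and Jensen together with $(1+\|x\|)\le(1+\|x-y\|)(1+\|y\|)$ for the $L^1$ bound. The only differences are cosmetic: you substitute $z=x-hu$ directly where the paper writes $\bE_P[g_x^2]=h^{-2d}(p*\kappa_h)(x)$. Also, your hand-computed covariance bound $|\langle \bar g_x,P^{l}\bar g_x\rangle|\le(1-\gamma)^{l}\|\bar g_x\|^2$ needs $\gamma$ read as $1-\|P\|_{L^2_0(P)}$, i.e.\ an absolute gap. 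For Paulin's one-sided gap of a reversible chain, simply cite his Theorem~3.1, which gives the same constant $2/\gamma$.
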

\begin{proof}
The proof is given in \Cref{sec:proofs}.
\end{proof}

With \Cref{thm:MarkovKDE_stationary} in hand, the following classical KDE error bounds follow immediately.
\begin{corollary}[KDE error bounds based on stationary Markov chains with spectral gap]
\label{cor:kde_rates_stationary}
Under the assumptions of \Cref{thm:MarkovKDE_stationary}, and assuming in
addition \eqref{eq:poly_tail_p_and_weighted_K2_moment} for the $\MIAE$ bounds,
\begin{align}
   \label{MIAE_and_MISE_iid}
    &\MIAE(\hat q_N) = \mathcal O\Big(\frac{1}{\sqrt{Nh^d}} + h^2\Big)&
    &\text{and}&
    &\MISE(\hat q_N) = \mathcal O\Big(\frac{1}{Nh^d} + h^4\Big).&
\intertext{
Balancing the bias and variance terms yields the asymptotically optimal bandwidth
$h_N \asymp N^{-1/(d+4)}$,
and the corresponding optimal rates}
    \label{MIAE_and_MISE_iid_optimal_h}
    &\MIAE(\hat q_N) = \mathcal O \Big( N^{-2/(d+4)} \Big)&
    &\text{and}&
    &\MISE(\hat q_N) = \mathcal O \Big( N^{-4/(d+4)} \Big).& 
\end{align}
\end{corollary}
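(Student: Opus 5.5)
The plan is to combine the decompositions \eqref{eq:MISE_decomposition} and \eqref{eq:MIAE_decomposition} with the stochastic bounds of \Cref{thm:MarkovKDE_stationary} and the bias bounds of \Cref{lemma:KDE_bias}. The statement then reduces to bookkeeping plus a one-dimensional optimisation in $h$.

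\emph{Step 1: MISE.} The decomposition \eqref{eq:MISE_decomposition} is an exact identity: expand the square around $\bE_P[K_h(x,\cdot)]$. The cross term vanishes because $\bE[\hat q_N(x)] = \bE_P[K_h(x,\cdot)]$. This holds by stationarity, since each $Z_k\sim P$ in both the i.i.d.\ and the stationary Markov case. The integrated variance term is bounded by $C_{\mathrm{mc}}\|K\|_{L^2}^2/(Nh^d)$ via \eqref{equ:thm_MarkovKDE_stationary_L2_bound}. The integrated squared bias is bounded by $C_{\mathrm{bias},2}h^4$ via \Cref{lemma:KDE_bias}. Together these give $\MISE(\hat q_N)=\mathcal O(1/(Nh^d)+h^4)$.

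\emph{Step 2: MIAE.} Under \eqref{eq:poly_tail_p_and_weighted_K2_moment}, and for $h<1$ (which holds eventually because $h_N\to 0$), we use the triangle-inequality decomposition \eqref{eq:MIAE_decomposition}. The mean absolute deviation term is bounded by $C_1/(\sqrt N h^{d/2})$ via \eqref{equ:thm_MarkovKDE_stationary_L1_bound}. The absolute bias term is bounded by $C_{\mathrm{bias},1}h^2$. This yields $\MIAE(\hat q_N)=\mathcal O((Nh^d)^{-1/2}+h^2)$.

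\emph{Step 3: bandwidth choice.} Set $h_N\asymp N^{-1/(d+4)}$, which is the minimiser in order of $g(h)=a/(Nh^d)+bh^4$; setting $g'(h)=0$ gives $h^{d+4}\asymp N^{-1}$. This choice also satisfies \Cref{ass:kde_rates}(ii): $h_N\to0$, and $Nh_N^d=N^{4/(d+4)}\to\infty$. Substituting gives $1/(Nh^d)\asymp N^{-4/(d+4)}\asymp h^4$, so $\MISE=\mathcal O(N^{-4/(d+4)})$. The MIAE bound is the square root of the same balance, $(Nh^d)^{-1/2}\asymp N^{-2/(d+4)}\asymp h^2$, so the same $h_N$ balances it and $\MIAE=\mathcal O(N^{-2/(d+4)})$.

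There is no substantive obstacle here, since all the analytic work is contained in \Cref{thm:MarkovKDE_stationary} and \Cref{lemma:KDE_bias}. The only points to check are that the constants do not depend on $N$ or $h$, and the restriction $h<1$ in the $L^1$ bound. That restriction is harmless because the statements are asymptotic in $N$: for large $N$ we have $h_N<1$, and finitely many initial $N$ only affect the implied constant. The word ``optimal'' should be read as optimal in order for these upper bounds, not as a minimax claim.
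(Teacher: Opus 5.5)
Your proposal is correct and follows the paper's own argument: the paper proves the corollary in one line, plugging the bias bounds of \Cref{lemma:KDE_bias} and the stochastic bounds of \Cref{thm:MarkovKDE_stationary} into the decompositions \eqref{eq:MISE_decomposition}--\eqref{eq:MIAE_decomposition} and then balancing in $h$, exactly as you do. Your additional checks are correct details the paper leaves implicit: exactness of the MISE decomposition under stationarity, the harmless $h<1$ restriction, and $h_N \asymp N^{-1/(d+4)}$ satisfying \Cref{ass:kde_rates}(ii).
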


\begin{proof}
This follows directly from \Cref{lemma:KDE_bias} and \Cref{thm:MarkovKDE_stationary}.
\end{proof}

\subsection{KDE with non-stationary Markov chain input}
\label{sec:MC_KDE_non_stationary}

We now turn to the non-stationary setting.
While the behavior of the MISE for kernel density estimation in the stationary regime under mixing conditions is well understood, to the best of our knowledge, the results derived in this section are new and of independent interest.
Our analysis is based on the
non-asymptotic bounds for additive functionals of Markov chains established in
\cite{Latuszynski2013}.
To control the effect of dependence in the KDE, we work
in the geometrically ergodic regime, assuming a small set condition and a
drift condition.
These standard assumptions provide quantitative control of the Markov chain's mixing behavior and enable finite-sample error bounds \citep{Meyn2009MC}, without requiring stationarity or explicit spectral gap conditions. 

For a transition kernel $T \colon \mathbb R^d \times \mathcal B(\mathbb R^d) \to [0,1]$ and a measurable function $g \colon \mathbb R^d \to \mathbb R$, we write
$T[g](x) \coloneqq \int_{\mathbb R^d} g(y)\, T(x,dy), \ x\in\mathbb R^d,$
whenever the integral is well defined.
Iterates are denoted recursively by
$T^0[g]=g$ and $T^k[g]=T[T^{k-1}[g]]$ for $k\ge 1$.

\begin{assumption}[Markov chain and kernel conditions]
\label{ass:Markov_chain}
$Z_1,\dots,Z_N$ are generated by a Markov chain
$(Z_k)_{k\in\N}$ in $\R^d$ with $Z_{1} \sim \mu_1\in\cP(\R^d)$, time-homogeneous transition kernel $T\colon \R^d\times \cB(\R^d)\to [0,1]$ and invariant distribution $P$
such that $T$ is $P$-irreducible, aperiodic and Harris recurrent.
We impose the following conditions on the transition kernel $T$ and the KDE kernel $K$:
\begin{itemize}[leftmargin=4em, labelsep=0.3em]
\item[\textbf{(SSC)}]
\label{item:ass_Markov_chain_SSC}
\emph{Small set condition.}
There exist a set $J \in \cB(\R^d)$ with $\bE_{P}[\mathds{1}_J] > 0$, a constant $\beta > 0$, and a probability measure $\nu \in \cP(\R^d)$ such that for all $x \in J$,
\[
T(x, A) \ge \beta \nu(A)
\quad \text{for all Borel sets } A \subseteq \R^d.
\]
\item[\textbf{(DC)}]
\label{item:ass_Markov_chain_DC}
\emph{Drift condition.}
There exist a function $V: \R^d \to [1,\infty)$ and constants $\lambda \in (0,1)$ and $b > 0$ such that for all $x \in \R^d$,
\[
T[V](x) \le \lambda V(x)\mathds{1}_{\{x \notin J\}}
+ b\,\mathds{1}_{\{x \in J\}}\,,
\]
where $J$ is the set from (SSC). Moreover, assume that $\E_{\mu_1}[V]\le \frac{b}{1-\lambda}$.\footnote{We note that $\E_{\mu_1}[V]\le \frac{b}{1-\lambda}$ is required to uniformly bound $\E_{\mu_1}[T^N[V]]$ using \cite[Proposition 4.5]{Latuszynski2013}. For instance, it can be guaranteed when $\mu_1$ is a Dirac measure.}

\item[\textbf{(KMC)}]
\label{item:ass_Markov_chain_KMC}
\emph{Kernel moment condition.}
Assume that $K\in L^4(\R^d)$, and for all $r\in\{1,2,4\}$ there exists a constant $B_r = B_{r}(V,K)>0$, such that, for every $0 < h \leq 1$,
\[
\int_{\R^d}
\sup_{y \in \R^d}
|K_h(x,y)|^r V^{-r/2}(y)\,\rd x
\le \frac{B_r}{h^{rd}}\,.
\]
\end{itemize}
\end{assumption}
While \Cref{ass:Markov_chain} (SSC) and (DC) are classical conditions in the Markov chain literature \citep{Meyn2009MC,Latuszynski2013}, the kernel moment condition (KMC) is less common. It controls how the localization and tail decay of the KDE kernel interact with the growth of the drift function $V$.
The following lemma shows that (KMC) holds whenever 
$V^{-1/2}$ and $K$ admit non-increasing radial envelopes that decay at least
polynomially of order $r>d$.
In particular, the assumption is satisfied for commonly used kernels such as the Gaussian kernel, and for weight functions of the form
\[
V(x)=\exp(\kappa \|x\|) \quad \text{for some }\kappa>0,
\qquad\text{or}\qquad
V(x)=1+\|x\|^m \quad \text{for some } m>2d.
\]

\begin{lemma}\label{lem:weightedVnorm}
	Let $r\ge 1$,  $0 < h \leq 1$, and let $V\colon \R^d\to [1,\infty)$ and $K\colon \R^d\to[0,\infty)$ be measurable functions.
    Assume that there exist non-increasing functions $\Phi\colon [0,\infty) \to [0,1]$ and $\Psi \colon [0,\infty) \to  [0,\infty)$ such that, for every $x \in \bR^{d}$,
	\begin{align*}
	V^{-1/2}(x)&\le \Phi(\|x\|),&
	M_{\Phi,r}^{(d)}
	&\coloneqq
	|\mathbb S^{d-1}| \int_0^\infty \Phi(t)^r t^{d-1}\, \rd t
	<
	\infty,
	\\
	K(x) &\le \Psi(\|x\|),&
	M_{\Psi,r}^{(d)}
	&\coloneqq
	|\mathbb S^{d-1}| \int_0^\infty \Psi(t)^r t^{d-1}\, \rd t
	<
	\infty,
	\end{align*}	
    where $|\mathbb S^{d-1}|$ denotes the surface measure of the $(d-1)$-dimensional unit sphere in $\mathbb R^d$.
	Then
	\[
	S_h(x)
	\coloneqq
	\frac1{h^d}\sup_{y\in\R^d}
	K\left(\frac{x-y}{h}\right)V^{-1/2}(y),
	\qquad x\in\R^d,
	\]
	satisfies
	\[
	\int_{\R^d} S_h(x)^r\,\rd x
	\le
	\frac{2^d \, \Psi(0)^r \, M_{\Phi,r}^{(d)}}{h^{rd}}
	+
	\frac{2^d \, M_{\Psi,r}^{(d)}}{h^{(r-1)d}}\, .
	\]
	In particular, \Cref{ass:Markov_chain} (KMC) is satisfied with 
	$B_r =
	2^d\Big(
	\Psi(0)^r  \, M_{\Phi,r}^{(d)}
	+
	 M_{\Psi,r}^{(d)}
	\Big)$.
\end{lemma}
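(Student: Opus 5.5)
The plan is to bound $S_h(x)$ pointwise by splitting the supremum over $y$ according to whether $y$ lies close to $x$ or far from it, relative to $\|x\|/2$. Write $u=(x-y)/h$. For fixed $x$, we have either $\|y\|\ge \|x\|/2$ or $\|y\|<\|x\|/2$.

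\textbf{Case $\|y\|\ge\|x\|/2$.} Monotonicity of $\Phi$ gives $V^{-1/2}(y)\le\Phi(\|x\|/2)$, and $K\le\Psi(0)$. Hence this part of the supremum is at most $h^{-d}\Psi(0)\Phi(\|x\|/2)$.

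\textbf{Case $\|y\|<\|x\|/2$.} Here $\|x-y\|\ge\|x\|/2$, so $K((x-y)/h)\le\Psi(\|x\|/(2h))$ by monotonicity of $\Psi$. Also $V^{-1/2}\le\Phi\le 1$. This part is therefore at most $h^{-d}\Psi(\|x\|/(2h))$.

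Combining the two cases,
\[
S_h(x)\le h^{-d}\max\bigl(\Psi(0)\Phi(\|x\|/2),\,\Psi(\|x\|/(2h))\bigr).
\]
Raising to the power $r$ and using $\max(a,b)^r\le a^r+b^r$ yields
\[
S_h(x)^r\le h^{-rd}\bigl(\Psi(0)^r\Phi(\|x\|/2)^r+\Psi(\|x\|/(2h))^r\bigr).
\]

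Next, integrate each term in polar coordinates.
\begin{itemize}
\item The first term gives $\int\Phi(\|x\|/2)^r\,\rd x=|\mathbb S^{d-1}|\int_0^\infty\Phi(t/2)^r t^{d-1}\,\rd t=2^d M^{(d)}_{\Phi,r}$, via the substitution $t=2s$.
\item The second term gives $\int\Psi(\|x\|/(2h))^r\,\rd x=(2h)^d M^{(d)}_{\Psi,r}$, via $t=2hs$.
\end{itemize}
Multiplying by $h^{-rd}$ gives exactly
\[
\int_{\R^d} S_h(x)^r\,\rd x\le \frac{2^d\Psi(0)^rM^{(d)}_{\Phi,r}}{h^{rd}}+\frac{2^dM^{(d)}_{\Psi,r}}{h^{(r-1)d}},
\]
which is the claimed bound.

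For the \enquote{in particular}, note that $K_h(x,y)=h^{-d}K((x-y)/h)\ge0$, so $|K_h(x,y)|^rV^{-r/2}(y)=(h^{-d}K((x-y)/h)V^{-1/2}(y))^r$. Taking the supremum over $y$ therefore gives exactly $S_h(x)^r$. Since $0<h\le1$, we have $h^{-(r-1)d}\le h^{-rd}$, and the bound collapses to $B_r/h^{rd}$ with $B_r$ as stated. This is applied for each $r\in\{1,2,4\}$, assuming the envelope integrals are finite for those $r$.

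The only delicate step is the case split. The threshold $\|x\|/2$ must be chosen so that the two regimes cover all $y$, and so that each regime transfers the decay in $\|x\|$ to exactly one of the envelopes. Everything else is a routine change of variables.
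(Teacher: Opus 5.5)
Your proof is correct and is essentially the paper's argument. You use a two-case split via the triangle inequality to reach the same pointwise bound $h^{rd}S_h(x)^r\le\Psi(0)^r\Phi(\|x\|/2)^r+\Psi(\|x\|/(2h))^r$, followed by the same radial substitutions $t=2s$ and $t=2hs$. The only cosmetic difference is that you split on $\|y\|$ versus $\|x\|/2$, whereas the paper splits on $\|x-y\|=h\|z\|$ versus $\|x\|/2$ (which is what your opening sentence describes), and either partition works.
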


\begin{proof}
The proof is given in \Cref{sec:proofs}.
\end{proof}

\begin{lemma}
\label{lemma:Latuszynski_summary}
Under \Cref{ass:Markov_chain}, there exist constants $\alpha_{0},\alpha_{1} > 0$ such that, for each $N \in N$ and $g \in L^{1}(P)$,
\begin{equation*}
\E\bigg[\Big|\frac1N\sum_{k=1}^{N} g(Z_k) - \bE_{P}[g]\Big|\bigg]
\le
\E\bigg[\Big(\frac1N\sum_{k=1}^{N} g(Z_k) - \bE_{P}[g]\Big)^2\bigg]^{\frac{1}{2}}
\le
\frac{\alpha_{0}\, \|\bar{g}\|_{V^{1/2}}}{\sqrt{N}}
+
\frac{\alpha_{1}\, \|\bar{g}\|_{V^{1/2}}^2}{N}\,,
\end{equation*}
where
\begin{equation*}
\bar{g} \coloneqq g - \bE_{P}[g],
\qquad
\|\bar{g}\|_{V^{1/2}}^2 \coloneqq \sup_{y\in\R^d}\ |\bar{g}(y)|^2 V^{-1}(y)\,.
\end{equation*}
\end{lemma}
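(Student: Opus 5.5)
The first inequality is Jensen's (or Cauchy--Schwarz) inequality, $\E|Y|\le \E[Y^2]^{1/2}$, applied to $Y = \frac1N\sum_{k=1}^N g(Z_k)-\bE_P[g] = \frac1N\sum_{k=1}^N \bar g(Z_k)$. All the work is in the second inequality. The plan is to import the mean-square error bounds of \cite{Latuszynski2013} for MCMC ergodic averages under (SSC) and (DC). Those bounds apply to functions $g$ with $\|\bar g\|_{V^{1/2}}<\infty$. If $\|\bar g\|_{V^{1/2}}=\infty$ the claim is trivial, so we may assume finiteness throughout.

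First I recall the structure of their result, which gives, for the chain without burn-in,
\[
\E\Big[\Big(\frac1N\sum_{k=1}^N \bar g(Z_k)\Big)^2\Big]
\le \frac{\sigma_{\mathrm{as}}^2(\bar g)}{N}\Big(1+\frac{C_0(\mu_1)}{N}\Big) + \frac{C_1(\mu_1,\bar g)}{N^2}.
\]
Here $\sigma^2_{\mathrm{as}}(\bar g)$ is the asymptotic variance. They bound it by $c_{\mathrm{as}}\|\bar g\|_{V^{1/2}}^2$, with $c_{\mathrm{as}}$ explicit in $\beta,\lambda,b$; this uses that $V^{1/2}$ also satisfies a drift condition, by Jensen's inequality. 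The remainder $C_1$ is bounded by a constant times $\|\bar g\|_{V^{1/2}}^2\,\E_{\mu_1}[V]$ and/or $\sup_N\E_{\mu_1}[T^N V]$, depending on the form of their theorem. The constant $C_0(\mu_1)$ involves the same quantities.

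Next I make these constants independent of $\mu_1$ and $N$. By \cite[Proposition 4.5]{Latuszynski2013}, the footnote hypothesis $\E_{\mu_1}[V]\le b/(1-\lambda)$ gives $\E_{\mu_1}[T^k V]\le b/(1-\lambda)$ uniformly in $k$. Hence every constant depends only on $(\beta,\lambda,b)$. Collecting terms gives
\[
\E\big[(\cdots)^2\big]\le \frac{A_0\|\bar g\|_{V^{1/2}}^2}{N}+\frac{A_1\|\bar g\|_{V^{1/2}}^2}{N^2}.
\]
I would then take square roots using $\sqrt{x+y}\le\sqrt x+\sqrt y$, which gives terms $\alpha_0\|\bar g\|_{V^{1/2}}/\sqrt N$ and $\sqrt{A_1}\|\bar g\|_{V^{1/2}}/N$.

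Matching the stated form, whose second term is $\alpha_1\|\bar g\|^2_{V^{1/2}}/N$, needs care. If their remainder is quartic in the norm, taking square roots produces exactly the squared norm. If it is quadratic, the stated form still follows after enlarging constants by a case split. When $\|\bar g\|_{V^{1/2}}\ge 1$ we have $\|\bar g\|\le\|\bar g\|^2$, so the term $\|\bar g\|/N$ is absorbed into $\|\bar g\|^2/N$. When $\|\bar g\|_{V^{1/2}}<1$ we have $\|\bar g\|/N\le \|\bar g\|/\sqrt N$, so it is absorbed into the $\alpha_0$ term.

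The main obstacle is bookkeeping. It consists of locating the precise theorem in \cite{Latuszynski2013} for the no-burn-in estimator under the $V^{1/2}$-weighted norm, and verifying that its constants, in particular those involving the regeneration times under (SSC) and (DC), are controlled uniformly via Proposition 4.5 together with $\E_{\mu_1}[V]\le b/(1-\lambda)$.
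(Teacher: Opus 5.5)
Your proposal is correct and is essentially the paper's proof: Jensen's inequality for the first inequality, then \cite[Theorems~3.1 and~4.2]{Latuszynski2013}, with the constants that depend on the initial distribution made uniform in $N$ via \cite[Proposition~4.5]{Latuszynski2013} and $\E_{\mu_1}[V]\le b/(1-\lambda)$. The paper also invokes Proposition~4.5 for the stationary moments $\E_P[V^{1/2}]\le \E_P[V]$, and this step is what makes your asymptotic-variance constant depend only on $(\beta,\lambda,b)$. Your absorption of the remainder into the stated $\alpha_1\|\bar g\|_{V^{1/2}}^2/N$ form is valid, though no case split is needed, since $\|\bar g\|_{V^{1/2}}/N\le \|\bar g\|_{V^{1/2}}/\sqrt{N}$ for every $N\ge 1$.
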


\begin{proof}
The first inequality is a direct application of Jensen's inequality, while the second one follows from \citep[Theorems~3.1 and~4.2]{Latuszynski2013}, after noting that by \citep[Proposition~4.5]{Latuszynski2013},
there exists $R>0$ such that, for all $N\in\N_{0}$,
\begin{align*}
\E_{\mu_1}[T^N[\sqrt{V}]] \le \E_{\mu_1}[T^N[V]]\le R,
\qquad
\bE_{P}[\sqrt{V}] \le \bE_{P}[V] \le R,
\end{align*}
where we used that $V \geq 1$.
\end{proof}

We are now in the position to formulate the main result of this section. 
\begin{theorem}
\label{thm:MarkovKDE}
    Under \Cref{ass:kde_rates} and \Cref{ass:Markov_chain} the KDE defined in \eqref{eq:kde_def_general} satisfies, for every $N\in\N$ and any bandwidth $0 < h \leq 1$,
    {\small
    \begin{align*}        
    \int_{\R^d} \E \bigg[\Big|\hat q_N(x)-\bE_{P}\big[K_h(x,\cdot)\big]\Big|\bigg]\d x
    &\le
    \frac{\alpha_0\|K\|_{L^1(\R^d)}}{\sqrt{N}} + \frac{\alpha_0 B_1}{\sqrt{N}h^{d}} + \frac{2\alpha_1\|K\|_{L^2(\R^d)}^2}{Nh^d} + \frac{2\alpha_1 B_2}{Nh^{2d}},
    \\    
    \int_{\R^d} \E \bigg[\Big(\hat q_N(x)-\bE_{P}\big[K_h(x,\cdot)\big]\Big)^{\! 2} \bigg]\d x
    &\le
    \frac{4\alpha_0^2 \|K\|_{L^2(\R^d)}^2}{Nh^d} + \frac{4\alpha_0^2 B_2}{Nh^{2d}} + \frac{8\alpha_1^2\|K\|_{L^4(\R^d)}^4}{N^2 h^{3d}}+\frac{8\alpha_1^2B_4}{N^2 h^{4d}}\,.    
    \end{align*}    
    } 
\end{theorem}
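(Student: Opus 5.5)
The plan is to apply \Cref{lemma:Latuszynski_summary} pointwise in $x$, with $g = g_x \coloneqq K_h(x,\cdot)$, and then integrate over $x$. For fixed $x$, $g_x\in L^1(P)$ and $\frac1N\sum_k g_x(Z_k) = \hat q_N(x)$. The lemma then gives
\[
\E\big[|\hat q_N(x)-\bE_P[K_h(x,\cdot)]|\big]
\le
\frac{\alpha_0\|\bar g_x\|_{V^{1/2}}}{\sqrt N}+\frac{\alpha_1\|\bar g_x\|_{V^{1/2}}^2}{N},
\]
together with the same right-hand side as a bound for the $L^2$-norm. Squaring with $(a+b)^2\le 2a^2+2b^2$ yields the second-moment bound
\[
\frac{2\alpha_0^2\|\bar g_x\|_{V^{1/2}}^2}{N}+\frac{2\alpha_1^2\|\bar g_x\|_{V^{1/2}}^4}{N^2}.
\]
Everything then reduces to bounding $\int \|\bar g_x\|_{V^{1/2}}^r\,\d x$ for $r\in\{1,2,4\}$.

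For this I use $V\ge 1$, so $V^{-1/2}\le 1$, together with the triangle inequality:
\[
\|\bar g_x\|_{V^{1/2}} \le \sup_y K_h(x,y)V^{-1/2}(y) + \bE_P[K_h(x,\cdot)] \eqqcolon S_h(x)+m_h(x).
\]
Then $(a+b)^r\le 2^{r-1}(a^r+b^r)$ gives the constants $1,2,8$, which match the factors $\alpha_0$, $2\alpha_1$ / $4\alpha_0^2$, $8\alpha_1^2$ in the statement. The $S_h$-terms are controlled directly by (KMC): $\int S_h^r\le B_r h^{-rd}$, which produces the $B_1,B_2,B_4$ terms with $h^{-d},h^{-2d},h^{-4d}$.

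For the $m_h$-terms I use Jensen in the form $m_h(x)^r=(\int K_h(x,y)p(y)\,\d y)^r\le \int K_h(x,y)^r p(y)\,\d y$, since $p\,\d y$ is a probability measure. Integrating in $x$ and substituting $u=(x-y)/h$ gives
\[
\int m_h^r \le \|K_h\|_{L^r}^r=\|K\|_{L^r}^r h^{-(r-1)d}.
\]
For $r=1$ this is $\|K\|_{L^1}$, for $r=2$ it is $\|K\|_{L^2}^2h^{-d}$, and for $r=4$ it is $\|K\|_{L^4}^4h^{-3d}$. These are exactly the remaining terms. Collecting everything: the MIAE-type bound takes the $r=1$ terms with $\alpha_0/\sqrt N$ and the $r=2$ terms with $2\alpha_1/N$. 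The MISE-type bound takes the $r=2$ terms with $2\alpha_0^2\cdot 2/N$ and the $r=4$ terms with $2\alpha_1^2\cdot 8/N^2$.

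The last step is where I expect the main obstacle: the paper's stated constant $8\alpha_1^2$ is half of the $16\alpha_1^2$ my route gives. I would try to close this gap by squaring more carefully, or, failing that, absorb the extra factor into $\alpha_1$, since the lemma's constants are not explicit. The remaining work is checking measurability of $x\mapsto \|\bar g_x\|$ and the use of Tonelli to swap $\E$ and $\int\d x$, both of which are routine.
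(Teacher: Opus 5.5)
Your argument is the paper's own proof. You apply \Cref{lemma:Latuszynski_summary} to $g_x=K_h(x,\cdot)$, split $\|\bar g_x\|_{V^{1/2}}\le S_h(x)+m_h(x)$ using $V\ge 1$, control $\int S_h^r$ by (KMC) and $\int m_h^r$ by Jensen, and integrate in $x$.

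The factor-of-two discrepancy you found at the end is real, not a slip on your part. The paper bounds $\int\|\bar g_x\|_{V^{1/2}}^r\,\mathrm{d}x$ with a factor $r$, i.e.\ it uses $(a+b)^r\le r(a^r+b^r)$. This holds for $r=1,2$ but is false for $r=4$ (take $a=b$: $16a^4>8a^4$). That is the only source of the stated $8\alpha_1^2$. Your $2^{r-1}$ is the correct constant, so the triangle-inequality route honestly gives $16\alpha_1^2$.

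Squaring more carefully will not rescue this. For any weights, the splits $(a+b)^2\le(1+\epsilon)a^2+(1+\epsilon^{-1})b^2$ and $(S+m)^4\le\lambda^{-3}S^4+(1-\lambda)^{-3}m^4$ leave a coefficient $(1+\epsilon^{-1})\max\{\lambda^{-3},(1-\lambda)^{-3}\}>8$ on one of the two $\alpha_1^2$ terms. Absorbing the factor into $\alpha_1$ is formally legitimate: the lemma only asserts existence of $\alpha_0,\alpha_1$ and remains true with $\sqrt2\,\alpha_1$, which only weakens the first bound. But that is a workaround, not a proof of the constants as written.

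The missing idea is to use $K\ge 0$. Since $g_x(y)\ge0$ and $m_h(x)\ge0$, you have $|g_x(y)-m_h(x)|\le\max\{g_x(y),m_h(x)\}$. Together with $V^{-1/2}\le 1$ this gives $\|\bar g_x\|_{V^{1/2}}\le\max\{S_h(x),m_h(x)\}$. Hence $\|\bar g_x\|_{V^{1/2}}^r\le S_h(x)^r+m_h(x)^r$ for every $r$, with constant $1$ instead of $2^{r-1}$. Running the rest of your argument unchanged yields, for the two integrals respectively,
\begin{align*}
&\frac{\alpha_0\bigl(\|K\|_{L^1(\R^d)}+B_1h^{-d}\bigr)}{\sqrt N}+\frac{\alpha_1\bigl(\|K\|_{L^2(\R^d)}^2h^{-d}+B_2h^{-2d}\bigr)}{N},\\
&\frac{2\alpha_0^2\bigl(\|K\|_{L^2(\R^d)}^2h^{-d}+B_2h^{-2d}\bigr)}{N}+\frac{2\alpha_1^2\bigl(\|K\|_{L^4(\R^d)}^4h^{-3d}+B_4h^{-4d}\bigr)}{N^2}.
\end{align*}
Both lie below the stated right-hand sides. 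So the theorem is true with its constants as stated, even though the paper's own derivation of $8\alpha_1^2$ is flawed.
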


\begin{proof}
	The proof consists of applying \Cref{lemma:Latuszynski_summary} to $g = g_{x} \coloneqq K_h(x,\cdot)$ for fixed $x \in \bR^{d}$ and then integrating over $x$. Using that $K_h(x,\cdot)\in L^1(P)$ and $V\ge 1$, we obtain for $r=1,2,4$,
    \begin{align*}
         \int_{\R^d} \norm{\bE_{P}[K_h(x,\cdot)]}_{V^{1/2}}^r\,\rd x
         &=
         \int_{\R^d} \bE_{P}[K_h(x,\cdot)]^r\, \sup_{y\in\R^d} V^{-r/2}(y)\, \rd x
         \\
         &\le
         \int_{\R^d} \bE_{P}[K_h(x,\cdot)^{r}]\,\rd x
         \\
         &=
         h^{-rd} \bE_{P} \bigg[ \int_{\R^d} K\bigg( \frac{x - \cdot}{h} \bigg)^{\! r}\,\rd x \bigg]
        \\
        &=
        \frac{\|K\|_{L^r(\R^d)}^r}{h^{(r-1)d}}.
    \end{align*}
    Together with \Cref{ass:Markov_chain} (KMC) this gives
    \[
    \int_{\R^d}\|\bar g_x\|_{V^{1/2}}^r\, \rd x
    \leq
    r \int_{\R^d} \norm{K_h(x,\cdot)}^{r} + \norm{\bE_{P}[K_h(x,\cdot)]}_{V^{1/2}}^r\, \rd x
    \leq
    \frac{r\, \|K\|_{L^r(\R^d)}^r}{h^{(r-1)d}} + \frac{r\, B_r}{h^{r d}}\,.
    \]
    A direct application of \Cref{lemma:Latuszynski_summary} yields
    \begin{align*}
    \int_{\R^d} \E \bigg[\Big|\hat q_N(x)-\bE_{P}\big[ & K_h(x,\cdot)\big]\Big|\bigg]\d x    
    \le
    \int_{\R^d}
    \frac{\alpha_{0}\, \|\bar{g_{x}}\|_{V^{1/2}}}{\sqrt{N}}
    +
    \frac{\alpha_{1}\, \|\bar{g_{x}}\|_{V^{1/2}}^2}{N}\, 
    \d x
    \\
    &\le \frac{\alpha_0\|K\|_{L^1(\R^d)}}{\sqrt{N}} + \frac{\alpha_0 B_1}{\sqrt{N}h^{d}} + \frac{2\alpha_1\|K\|_{L^2(\R^d)}^2}{Nh^d} + \frac{2\alpha_1 B_2}{Nh^{2d}},
    \end{align*}
    and similarly,
    \begin{align*}
    \int_{\R^d} \E \bigg[\Big(\hat q_N(x)-\bE_{P}\big[ & K_h(x,\cdot)\big]\Big)^2\bigg]\d x
    \le
    \int_{\R^d}
    \bigg( \frac{\alpha_{0}\, \|\bar{g_{x}}\|_{V^{1/2}}}{\sqrt{N}}
    +
    \frac{\alpha_{1}\, \|\bar{g_{x}}\|_{V^{1/2}}^2}{N} \bigg)^{2}
    \d x
    \\
    &\le
    2 \int_{\R^d}
    \bigg( \frac{\alpha_{0}\, \|\bar{g_{x}}\|_{V^{1/2}}}{\sqrt{N}} \bigg)^{2}
    +
    \bigg(\frac{\alpha_{1}\, \|\bar{g_{x}}\|_{V^{1/2}}^2}{N} \bigg)^{2}
    \d x
    \\
    &\le \frac{4\alpha_0^2 \|K\|_{L^2(\R^d)}^2}{Nh^d} + \frac{4\alpha_0^2 B_2}{Nh^{2d}} + \frac{8\alpha_1^2\|K\|_{L^4(\R^d)}^4}{N^2 h^{3d}}+\frac{8\alpha_1^2B_4}{N^2 h^{4d}}\,.
    \end{align*}
\end{proof}

As a consequence of \Cref{thm:MarkovKDE}, we obtain the following KDE error bounds which are more conservative than the ones in the i.i.d.\ case (cf.\ \Cref{cor:kde_rates_stationary}).
\begin{corollary}[KDE error bounds based on non-stationary Markov chain data]
\label{thm:kde_rates}
Suppose \Cref{ass:kde_rates} holds and let $Z_1,\dots,Z_N$ be generated by a Markov chain satisfying \Cref{ass:Markov_chain}.   
Then the KDE $\hat{q}_{N}$ in \eqref{eq:kde_def_general} satisfies
\begin{align}
\label{MIAE_and_MISE_nonstationary}
&\MIAE(\hat q_N) = \mathcal O\Big(\frac{1}{\sqrt{Nh^{2d}}} + h^2\Big)&
&\text{and}&
&\MISE(\hat q_N) = \mathcal O\Big(\frac{1}{Nh^{2d}} + h^4\Big).&
\intertext{
Balancing the worst case upper bounds on bias and variance terms yields the asymptotically optimal bandwidth
$h_N \asymp N^{-1/(2d+4)}$,
and the corresponding rates}
\label{MIAE_and_MISE_nonstationary_optimal_h}
&\MIAE(\hat q_N) = \mathcal O \Big( N^{-1/(d+2)} \Big)&
&\text{and}&
&\MISE(\hat q_N) = \mathcal O \Big( N^{-2/(d+2)} \Big).&   
\end{align}
\end{corollary}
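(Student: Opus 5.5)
The corollary follows by feeding \Cref{thm:MarkovKDE} and \Cref{lemma:KDE_bias} into the decompositions \eqref{eq:MISE_decomposition} and \eqref{eq:MIAE_decomposition}.

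\textbf{Step 1: the $\MIAE$ bound.} By \eqref{eq:MIAE_decomposition}, $\MIAE(\hat q_N)$ is at most the integrated mean absolute deviation plus the integrated absolute bias. \Cref{lemma:KDE_bias} bounds the bias part by $C_{\mathrm{bias},1}h^2$. For the stochastic part, the first estimate of \Cref{thm:MarkovKDE} gives four terms. Since $0<h\le 1$, $h_N\to 0$ and $N h_N^{2d}\to\infty$, I would show each is dominated by $(Nh^{2d})^{-1/2}$:
\begin{itemize}
\item $N^{-1/2}\le (Nh^{2d})^{-1/2}$ because $h\le 1$;
\item $(Nh^d)^{-1}\le (Nh^{2d})^{-1}$, again because $h\le 1$;
\item $(Nh^{2d})^{-1}\le (Nh^{2d})^{-1/2}$ once $Nh^{2d}\ge 1$.
\end{itemize}

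\textbf{Step 2: the $\MISE$ bound.} By \eqref{eq:MISE_decomposition}, $\MISE(\hat q_N)$ is the integrated variance plus the integrated squared bias. \Cref{lemma:KDE_bias} bounds the latter by $C_{\mathrm{bias},2}h^4$. The second estimate of \Cref{thm:MarkovKDE} yields terms of order $(Nh^d)^{-1}$, $(Nh^{2d})^{-1}$, $(N^2h^{3d})^{-1}$ and $(N^2h^{4d})^{-1}$. Each is at most a constant times $(Nh^{2d})^{-1}$:
\begin{itemize}
\item $(Nh^d)^{-1}\le (Nh^{2d})^{-1}$ because $h\le 1$;
\item $(N^2h^{4d})^{-1}=(Nh^{2d})^{-2}\le (Nh^{2d})^{-1}$ when $Nh^{2d}\ge1$;
\item $(N^2h^{3d})^{-1}\le (N^2h^{4d})^{-1}$ because $h\le 1$.
\end{itemize}

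\textbf{Step 3: the optimal bandwidth.} Balance $(Nh^{2d})^{-1}\asymp h^4$, giving $h^{2d+4}\asymp N^{-1}$, i.e.\ $h_N\asymp N^{-1/(2d+4)}$. This choice satisfies $h_N\to0$ and $Nh_N^{2d}=N^{4/(2d+4)}\to\infty$. Substituting gives $\MISE=\mathcal O(h^4)=\mathcal O(N^{-4/(2d+4)})=\mathcal O(N^{-2/(d+2)})$. For the $\MIAE$, $(Nh^{2d})^{-1/2}\asymp h^2$ with the same $h_N$, so $\MIAE=\mathcal O(N^{-2/(2d+4)})=\mathcal O(N^{-1/(d+2)})$.

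\textbf{Main obstacle.} There is no real obstacle; the only care point is bookkeeping. The domination steps need $h\le1$ and $Nh^{2d}\ge 1$, both of which hold for all large $N$ under the chosen bandwidth. Small $N$ is absorbed into the $\mathcal O$-constants. One should also confirm that \Cref{ass:kde_rates}(ii) is compatible with the chosen rate, which it is.
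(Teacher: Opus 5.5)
Your proposal is correct and follows the paper's own argument. The paper's proof says only that the corollary follows from \Cref{lemma:KDE_bias} and \Cref{thm:MarkovKDE}, and your term-by-term domination and bandwidth balancing supply exactly the omitted bookkeeping. Two small caveats, both shared with the paper's one-line proof: for a non-stationary chain $\E[\hat q_N(x)]\neq\E_P[K_h(x,\cdot)]$ in general, so \eqref{eq:MISE_decomposition} should be used as the inequality $\MISE(\hat q_N)\le 2\,(\text{integrated variance})+2\,(\text{integrated squared bias})$ rather than as an equality; and your domination steps for a general bandwidth need $Nh^{2d}\gtrsim 1$, which \Cref{ass:kde_rates}(ii) does not provide (it only gives $Nh^d\to\infty$), although it does hold for $h_N\asymp N^{-1/(2d+4)}$.
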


\begin{proof}
This follows directly from \Cref{lemma:KDE_bias} and \Cref{thm:MarkovKDE}.
\end{proof}

\section{Importance sampling with KDE proposals}
\label{sec:kde_is}

We now combine the KDE error bounds from \Cref{sec:kde_mcmc_mise} with the general IS results of \Cref{thm:basic_random_is,thm:variance_bound_for_safe_is,thm:error_bound_for_clipped_safe_snis}, translating them into explicit guarantees for IS with KDE proposals, and also making precise the joint roles of the proposal sample size $N$ and the importance sampling size $n$. While the underlying error bounds in \Cref{thm:basic_random_is,thm:variance_bound_for_safe_is,thm:error_bound_for_clipped_safe_snis}
are non-asymptotic, the $\MIAE$ and $\MISE$ bounds in \Cref{cor:kde_rates_stationary,thm:kde_rates} are not, hence the resulting rates here are stated using \( \mathcal O(\cdot)\)-notation,
with constants independent of \(N\) and \(n\).
\begin{theorem}
\label{thm:is_kde_final_rates}
Suppose \Cref{ass:kde_rates} holds and let $\hat q_N$ denote the KDE constructed from auxiliary samples $Z_1,\dots,Z_N$ as in \eqref{eq:kde_def_general}.
\begin{enumerate}[label = (\alph*)]
\item
\label{item:thm_is_kde_final_rates_stationary}
Suppose that \Cref{ass:spectral_gap} holds and that $h_N \asymp N^{-1/(d+4)}$.
    \begin{enumerate}[label = (\roman*)]
    \item
    \label{item:thm_is_kde_final_rates_stationary_nondefensive}
    If $\hat{q} = \hat{q}_{N}$ and \Cref{assump:basic_is} and the polynomial tail condition \eqref{eq:poly_tail_p_and_weighted_K2_moment} hold, then the IS estimator $\I_n(f)$ in \eqref{equ:IS_estimator} satisfies
    \begin{equation*}        
        \E\bigl[|\I_n(f)-\I(f)|\bigr]
        =
        \mathcal{O} \bigl(n^{-1/2} + N^{-1/(d+4)}\bigr).
    \end{equation*}

    \item
    \label{item:thm_is_kde_final_rates_stationary_defensive}
    If \Cref{assump:N_dependent_safe_is} holds and $\delta_{N} \asymp N^{-1/(d+4)}$, then
    each of the estimators $\I_n^{\ast} \in \{ \I_n,\,  \I^{\mathrm{SN}}_{n},\, \I^{\mathrm{SN},\tau}_{n} \}$ defined in \eqref{equ:IS_estimator} and \eqref{eq:selfnormIS} satisfies
    \begin{equation*}
    \E\bigl[|\I_n^{\ast}(f)-\I(f)|\bigr]
    \leq
    \frac{1}{\sqrt{n}} \left(\Var[f(Z)]^{1/2} + \mathcal{O}\big( N^{-1/(2d+8)} \big)\right),
    \end{equation*}
    where for $\I_n^{\ast} = \I^{\mathrm{SN}}_{n}$ we additionally require $\bE\big[ \norm{f}_{L^4(\hat{Q})}^{4} \big]$ to be finite.
    \end{enumerate}
    
\item
\label{item:thm_is_kde_final_rates_nonstationary}
Suppose that \Cref{ass:Markov_chain} holds and that $h_N \asymp N^{-1/(2d+4)}$.
    \begin{enumerate}[label = (\roman*)]
    \item
    \label{item:thm_is_kde_final_rates_nonstationary_nondefensive}
    If $\hat{q} = \hat{q}_{N}$ and \Cref{assump:basic_is}  and the polynomial tail condition \eqref{eq:poly_tail_p_and_weighted_K2_moment} hold, then the IS estimator $\I_n(f)$ in \eqref{equ:IS_estimator} satisfies
    \begin{equation*}        
        \E\bigl[|\I_n(f)-\I(f)|\bigr]
        =
        \mathcal{O} \bigl(n^{-1/2} + N^{-1/(2d+4)}\bigr).
    \end{equation*}

    \item
    \label{item:thm_is_kde_final_rates_nonstationary_defensive}
    If \Cref{assump:N_dependent_safe_is} holds and $\delta_{N} \asymp N^{-1/(2d+4)}$, then
    each of the estimators $\I_n^{\ast} \in \{ \I_n,\,  \I^{\mathrm{SN}}_{n},\, \I^{\mathrm{SN},\tau}_{n} \}$ defined in \eqref{equ:IS_estimator} and \eqref{eq:selfnormIS} satisfies
    \begin{equation*}
    \E\bigl[|\I_n^{\ast}(f)-\I(f)|\bigr]
    \leq
    \frac{1}{\sqrt{n}} \left(\Var[f(Z)]^{1/2} + \mathcal{O}\big( N^{-1/(4d+8)} \big)\right),
    \end{equation*}
    where for $\I_n^{\ast} = \I^{\mathrm{SN}}_{n}$ we additionally require $\bE\big[ \norm{f}_{L^4(\hat{Q})}^{4} \big]$ to be finite.
    \end{enumerate}    
\end{enumerate}
\end{theorem}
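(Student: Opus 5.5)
The theorem is a direct combination of the general importance sampling bounds in \Cref{sec:bounds_is_estimators} with the KDE rates in \Cref{cor:kde_rates_stationary,thm:kde_rates}. The plan is to plug the bandwidth-optimized $\MIAE$ and $\MISE$ rates into \Cref{thm:basic_random_is}\ref{item:basic_random_is_error} for the non-defensive estimator, and into \Cref{cor:error_bound_for_N_dependent_safe_is_and_snis} for the defensive one. It remains only to identify the exponent $\eta$ and verify the choice of $\delta_N$.

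\emph{Non-defensive cases (a)(i) and (b)(i).} By \Cref{thm:basic_random_is}\ref{item:basic_random_is_error},
\[
\E\bigl[|\I_n(f)-\I(f)|\bigr] \le \frac{\|f\|_{L^2(P)}}{\sqrt n}\bigl(1+(4n\MIAE(\hat q_N))^{1/3}\bigr)^{3/2}.
\]
I would make the $\mathcal O$-form explicit with the elementary inequality $(1+x)^{3/2}\le \sqrt2\,(1+x^{3/2})$ for $x\ge0$, which follows from convexity of $t\mapsto t^{3/2}$. This gives the bound $\sqrt2\,\|f\|_{L^2(P)}\bigl(n^{-1/2}+2\MIAE(\hat q_N)^{1/2}\bigr)$, with constants independent of $n$ and $N$.

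For the stationary or i.i.d.\ case, \Cref{cor:kde_rates_stationary} applies under the tail condition \eqref{eq:poly_tail_p_and_weighted_K2_moment} with $h_N\asymp N^{-1/(d+4)}$. It gives $\MIAE=\mathcal O(N^{-2/(d+4)})$, hence $\MIAE^{1/2}=\mathcal O(N^{-1/(d+4)})$. In the non-stationary case, \Cref{thm:kde_rates} with $h_N\asymp N^{-1/(2d+4)}$ gives $\MIAE=\mathcal O(N^{-1/(d+2)})$, hence $\MIAE^{1/2}=\mathcal O(N^{-1/(2d+4)})$. The polynomial tail condition is listed among the hypotheses of (b)(i), so it is available there even though that corollary does not require it. Positivity of $\hat q_N$ where $p>0$, so that $P\ll\hat Q$, is part of \Cref{assump:basic_is}, so no further work is needed.

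\emph{Defensive cases (a)(ii) and (b)(ii).} I would write the $\MISE$ rate as $\mathcal O(N^{-2\eta})$. In the stationary case $\MISE=\mathcal O(N^{-4/(d+4)})$, so $\eta=2/(d+4)$ and $\eta/2=1/(d+4)$. This matches the prescribed $\delta_N\asymp N^{-1/(d+4)}$, and \Cref{cor:error_bound_for_N_dependent_safe_is_and_snis} yields the residual $\mathcal O(N^{-\eta/4})=\mathcal O(N^{-1/(2d+8)})$. In the non-stationary case $\MISE=\mathcal O(N^{-2/(d+2)})$, so $\eta=1/(d+2)$. Then $\delta_N\asymp N^{-\eta/2}=N^{-1/(2d+4)}$, as prescribed, and the residual is $N^{-\eta/4}=N^{-1/(4d+8)}$. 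The extra requirement $\E[\|f\|^4_{L^4(\hat Q)}]<\infty$ for $\I_n^{\mathrm{SN}}$ is inherited verbatim from the corollary.

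\emph{Main obstacle.} The arithmetic is straightforward. The only step needing care is confirming that \Cref{cor:error_bound_for_N_dependent_safe_is_and_snis} really delivers the stated form with $\Var[f(Z)]^{1/2}$ as leading constant, i.e.\ that $\sigma_{\delta_N}(f)\le \Var[f(Z)]^{1/2}+\mathcal O(\sigma^2_{\delta_N}(f)-\Var[f(Z)])^{1/2}$. This holds by $\sqrt{a+b}\le\sqrt a+\sqrt b$.

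One also has to check that the $\mathcal O(\cdot)$ constants in the $\MISE$ rate, which are asymptotic, combine with the fixed integrability constants in \eqref{eq:intconditions} uniformly in $n$. They do, because $n$ enters only through the explicit factor $n^{-1/2}$. For the clipped SNIS bound the extra term $\sigma_{\delta}(f)/\sqrt n$ is bounded by $\sigma_\delta(f)$, which is $\mathcal O(1)$, so all SNIS terms are absorbed into the $\mathcal O(N^{-\eta/4})$ remainder, as in the corollary.
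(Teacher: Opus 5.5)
Your proposal is correct and follows the paper's proof exactly. The paper simply cites \Cref{thm:basic_random_is}\ref{item:basic_random_is_error}, \Cref{cor:error_bound_for_N_dependent_safe_is_and_snis} and \Cref{cor:kde_rates_stationary,thm:kde_rates}, and you make the bookkeeping explicit: taking $\MIAE^{1/2}$ in the non-defensive cases via $(1+x)^{3/2}\le\sqrt{2}\,(1+x^{3/2})$, and identifying $\eta=2/(d+4)$ resp.\ $\eta=1/(d+2)$ in the defensive ones, with one caveat you share implicitly with the paper: for $\I_n^{\mathrm{SN}}$ the remainder $\mathcal O(N^{-\eta/4})$ requires $\bE\bigl[\norm{f}_{L^4(\hat Q)}^4\bigr]$ to be controlled in $N$ (e.g.\ bounded uniformly, or growing at most like $N^{\eta/2}$), not merely finite, since $\hat Q=\hat Q_{N,\delta_N}$ changes with $N$.
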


\begin{proof}
This follows directly from \Cref{thm:basic_random_is}\ref{item:basic_random_is_error}, \Cref{cor:error_bound_for_N_dependent_safe_is_and_snis,cor:kde_rates_stationary,thm:kde_rates}.
\end{proof}

\begin{table}[!t]
\centering
\caption{Summary of convergence rates stated in \Cref{thm:is_kde_final_rates} for the errors $\E\bigl[|\I_n^{\ast}(f)-\I(f)|\bigr]$ of KDE-based IS estimators $\I_n^{\ast} \in \{ \I_n,\,  \I^{\mathrm{SN}}_{n},\, \I^{\mathrm{SN},\tau}_{n} \}$ defined in \eqref{equ:IS_estimator} and \eqref{eq:selfnormIS} under the general \Cref{ass:kde_rates} with the KDE $\hat{q}_{N}$ defined by \eqref{eq:kde_def_general}.
Note that the rates for $\I_n^{\ast} = \I^{\mathrm{SN}}_{n}$ additionally require $\bE\big[ \norm{f}_{L^4(\hat{Q})}^{4} \big]$ to be finite.}
\label{tab:kde_is_rates}
\begin{tabularx}{\textwidth}{>{\raggedright\arraybackslash}p{4.0cm} XX}
\toprule
\makecell[l]{Random proposal $\hat{q}$
\\
and assumptions}
&
\makecell[c]{$Z_{k} \stackrel{\textup{i.i.d.}}{\sim} P$ or stationary MC
\\
(\Cref{ass:spectral_gap})
\\
with $h_N,\delta_{N}  \asymp N^{-1/(d+4)}$}
&
\makecell[c]{$(Z_{k})_{k \in \bN}$ non-stationary MC
\\
(\Cref{ass:Markov_chain})
\\
with $h_N,\delta_{N} \asymp N^{-1/(2d+4)}$}
\\
\midrule
\addlinespace[1em]
\makecell[l]{Non-defensive KDE\\
$\hat q = \hat q_N$, 
$\I_n^{\ast}  = \I_n$
\\
\Cref{assump:basic_is} + \eqref{eq:poly_tail_p_and_weighted_K2_moment}}
&
\makecell[l]{
$\mathcal O\!\left(
n^{-1/2} + N^{-1/(d+4)}
\right)$
}
&
\makecell[l]{
$\mathcal O\!\left(
n^{-1/2} + N^{-1/(2d+4)}
\right)$
}
\\
\addlinespace[1em]
\makecell[l]{Defensive KDE mixture
\\
$\hat q = \hat q_{N,\delta_N}$
\\
(\Cref{assump:N_dependent_safe_is})
\\
$\I_n^{\ast} \in \{ \I_n,\,  \I^{\mathrm{SN}}_{n},\, \I^{\mathrm{SN},\tau}_{n} \}$
}
&
\makecell[l]{
$\displaystyle
\frac{1}{\sqrt n}
\Big[
\Var[f(Z)]^{\frac{1}{2}}
+
\mathcal O \Big(
N^{- \frac{1}{2d+8}}
\Big)
\Big]$
}
&
\makecell[l]{
$\displaystyle
\frac{1}{\sqrt n}
\Big[
\Var[f(Z)]^{\frac{1}{2}}
+
\mathcal O \Big(
N^{-\frac{1}{4d+8}}
\Big)
\Big]$
}
\\
\bottomrule
\end{tabularx}
\end{table}

\begin{remark}\label{rem:thm6_1}
The error bounds in \Cref{thm:is_kde_final_rates}\ref{item:thm_is_kde_final_rates_stationary}\ref{item:thm_is_kde_final_rates_stationary_nondefensive} and \ref{item:thm_is_kde_final_rates_nonstationary}\ref{item:thm_is_kde_final_rates_nonstationary_nondefensive} suggest how to split the computational budget if the costs of adding another sample $Z_{k}$ and adding another sample $X_{i}$ are known (including the additional cost in the evaluation of the IS estimator $I_{n}$).
For example, if $Z_{k}$ are generated by a Metropolis--Hastings algorithm and the costly step is the evaluation of the target density $p$, then both costs are comparable, suggesting to choose
$n \asymp N^{2/(d+4)}$ under \Cref{ass:spectral_gap}
and $n \asymp N^{1/(d+2)}$ under
\Cref{ass:Markov_chain}.
However, this allocation is rather inefficient: Since $n$ grows only slowly compared to $N$, most of the computational budget is spent on generating auxiliary samples for the KDE proposal, while relatively few samples contribute to the actual IS estimator.
This observation strongly suggests the defensive construction considered in \Cref{thm:is_kde_final_rates}\ref{item:thm_is_kde_final_rates_stationary}\ref{item:thm_is_kde_final_rates_stationary_defensive} and \ref{item:thm_is_kde_final_rates_nonstationary}\ref{item:thm_is_kde_final_rates_nonstationary_defensive}, which allows the estimator to retain the optimal Monte Carlo rate $n^{-1/2}$ while the proposal error only appears as a mild variance inflation term.
\end{remark}

\section{Numerical illustration}
\label{sec:numerics}
The following experiment illustrates the behavior of the proposed defensive KDE-based IS scheme on a simple one-dimensional example.
The target distribution $P$ is a shifted Laplace distribution given by its probability density function
\[
p(x)
\coloneqq
\frac{1}{2b}\exp\Big(-\frac{|x-\mu|}{b}\Big)\,,
\qquad
\mu=10,\ b=1.5,\ x\in\R\,.
\]
Our defensive distribution is defined by a centered Cauchy distribution with scale parameter $s=30$, 
\[ \varphi(x) = \frac{s}{\pi(s^2+x^2)}\,,\quad x\in\R\,.\]
The defensive proposal is constructed as
\( \hat q_{N,\delta}(x) = (1-\delta)\hat q_N(x) + \delta \varphi(x)\,,\)
where $\hat q_N$ is a Gaussian KDE constructed from i.i.d.~samples $Z_1,\dots,Z_N\sim p$ with bandwidth $h_N = \Var_P(Z)^{\frac12} N^{-1/5}$, and $\delta = \delta_{N}$ may depend on the number of auxiliary samples.
The variance of $Z\sim P$ is given by $\Var_P(Z)=4.5$. 

We consider the estimation of the expectation $\mathcal I(f)=\E_P[f]$ for $f(x) = (x-5)^2$ using the IS estimator ${\mathcal I}_n(f)$. The exact value is $\mathcal I(f) = 29.5$. 
In our experiment, we compare three different choices of mixture weight $\delta$. 
Specifically, we consider $\delta\in\{0,0.5,1\}$, where $\delta=0$ corresponds to a pure KDE proposal and $\delta=1$ to a pure defensive proposal distribution.
In addition, we consider the theoretically motivated schedule $\delta_N=0.5 \cdot N^{-1/5}$ as suggested by the non-asymptotic bounds in
\Cref{thm:is_kde_final_rates}\ref{item:thm_is_kde_final_rates_stationary}.

\begin{figure}[!t]
  \centering \includegraphics[width=0.6\textwidth]{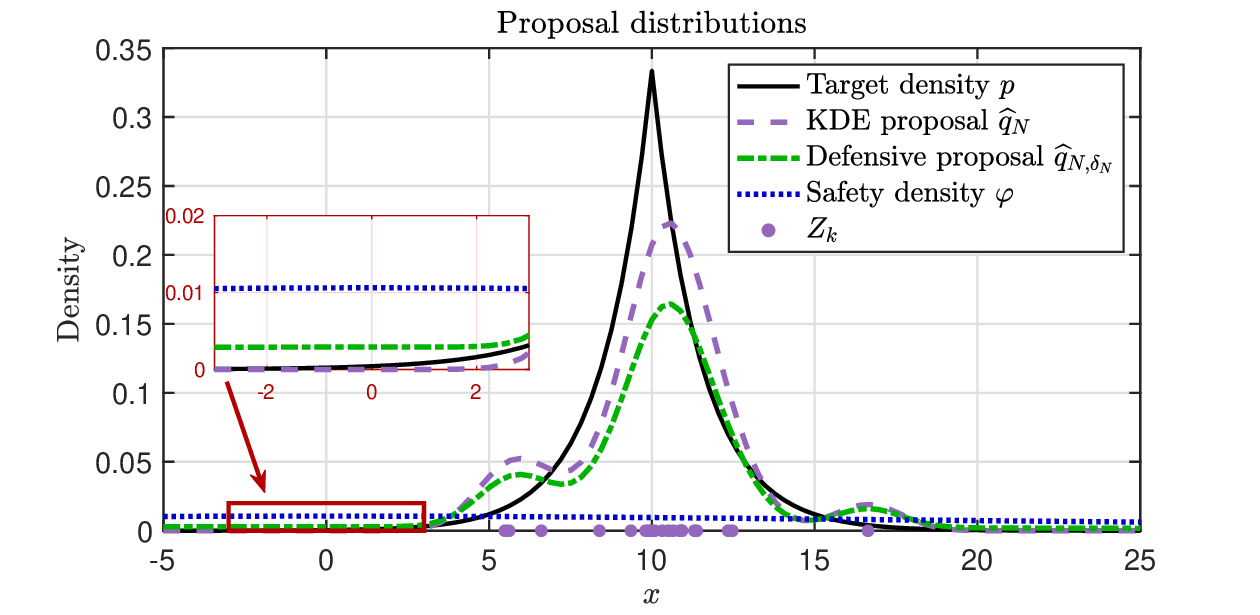}
 \caption{Illustration of the target probability density function and proposal densities.} \label{fig:kde}
\end{figure} 

\begin{figure}[!htb]
  \centering 
  \includegraphics[width=1\textwidth]{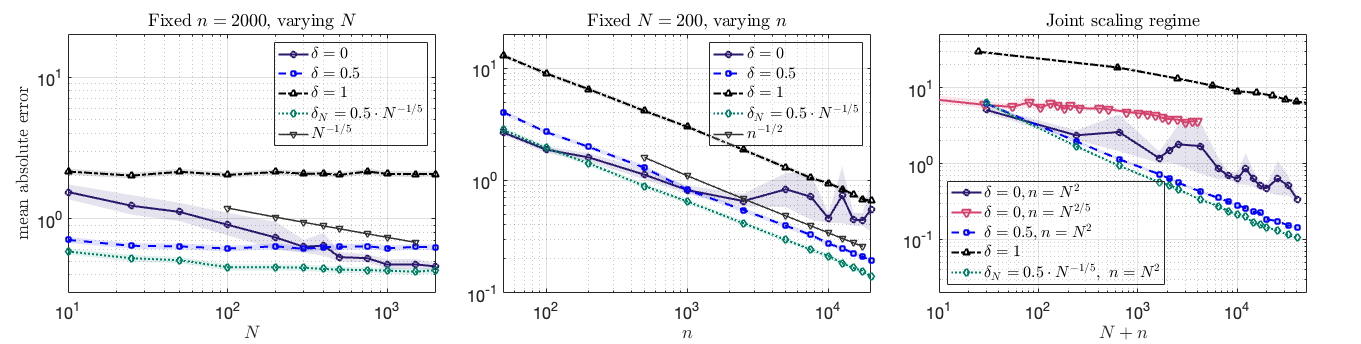}
 \caption{Left: Performance of the defensive KDE IS estimator with fixed IS sample size $n=2000$ depending on reference sample size $N$. Middle: Performance of the defensive KDE IS estimator with fixed reference sample size $N=200$ depending on IS sample size $n$. Right: Performance of the defensive KDE IS estimator under joint scaling regimes.
 All plots show the MAE for different choices of $\delta$ (as functions of $N$) and different scaling regimes.
 Shaded regions indicate bootstrap $95\%$ uncertainty bands for the estimated MAE, based on the 1000 Monte Carlo replications.} \label{fig:varyingsmalln}
\end{figure} 

The experiment is decomposed into three regimes. First, we consider a fixed importance sample size $n=2000$ and vary the reference sample size $N$. Second, we fix the reference sample size $N=200$ and vary the importance sample size $n$. Third, we consider a coupled regime $n \asymp N^2$ to illustrate the stabilizing effect of the defensive schedule $\delta_N$. Additionally, we also include the regime $n\asymp N^{2/5}$ that balances $n$ and $N$ in the upper bound $\mathcal O(n^{-1/2}+N^{-1/5})$ of the MAE in \Cref{thm:is_kde_final_rates}\ref{item:thm_is_kde_final_rates_stationary}\ref{item:thm_is_kde_final_rates_stationary_nondefensive}, see \Cref{rem:thm6_1}.
For each configuration the 
mean absolute error (MAE) $\bE[ | \mathcal I_{n}(f) - \mathcal I(f) | ]$ is estimated over $1000$ independent replications. To quantify the Monte Carlo variability of these performance metrics, \Cref{fig:varyingsmalln} reports the estimated MAE including $95\%$ confidence bands estimated by bootstrap.
The results highlight the effect of the defensive mixture approach. When $\delta = 0$, the proposal is entirely based on the KDE and leads to unstable estimators when the KDE underestimates the target density $p$ in the tails. This can also be observed in the regime $n\asymp N^{2/5}$ where the non-defensive IS estimator suffers from the slow convergence of the KDE. In contrast, a moderate value of $\delta$ stabilizes the estimator. The scheduled choice $\delta_N$ successfully interpolates between the pure defensive and full KDE proposal, providing a robust estimation performance.

\begin{appendix}

\section{Universal Sampling and Conditioning on a Random Measure}
\label{sec:universal_sampling}

The following result justifies the identity $\E [g(X) \, | \,  \hat Q]=\int g\,\d\hat Q$, where $\hat{Q}$ is a random probability measure and $X \sim \hat{Q}$.
To ensure this equality holds, we construct $X$ as a $\hat{Q}$–measurable transform of an auxiliary random variable $U$ that is independent of $\hat{Q}$ (i.e., $X=\Psi(\hat{Q},U)$ for a suitable Borel map $\Psi$).
As usual, $\cP(\bR^{d})$ denotes the space of probability measures on $(\bR^{d},\cB(\bR^{d}))$, equipped with the Borel $\sigma$-algebra of its weak topology.

\begin{theorem}[Universal sampling and conditioning identity]
\label{thm:universal_sampling}
There exists a Borel-measurable map $\Psi\colon \cP(\bR^{d}) \times [0,1] \to  \bR^{d}$ such that, for every $\mu\in\cP(\bR^{d})$ and $U\sim\mathrm{Unif}[0,1]$,
\(
\Psi(\mu,U)\sim \mu.
\)
Let $\hat Q$ be a $\cP(\bR^{d})$-valued random variable on $(\Omega,\Sigma,\P)$, let $U\sim\mathrm{Unif}[0,1]$ be independent of $\hat Q$ (by enlarging the space if necessary) and consider the $\bR^{d}$-valued random variable $X \coloneqq \Psi(\hat Q,U)$.
Then its conditional law given $\hat{Q}$ satisfies $\mathcal L(X \, | \, \hat Q) = \hat Q$ a.s., and, for every measurable $g\colon \bR^{d}\to\R$ with $\E \big[\int \abs{g} \,\d\hat Q \big] < \infty$,
\[
\bE \big[g(X) \, |\, \hat Q \big]
=
\int g(x)\,\hat Q(\d x)\quad\text{a.s.}
\]
In particular, $\bE[|g(X)|]<\infty$.
\end{theorem}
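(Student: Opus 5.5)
The plan has two parts: first construct $\Psi$ as a measurable "randomization" map on the standard Borel space $\cP(\bR^d)$, then derive the conditioning identity from the independence of $U$ and $\hat Q$ via the freezing (substitution) lemma.

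\textbf{Construction of $\Psi$.} Fix a Borel isomorphism $\iota\colon \bR^d \to B$ onto a Borel subset $B\subseteq[0,1]$. Such a map exists by Kuratowski's theorem, since $\bR^d$ is Polish. For $\mu\in\cP(\bR^d)$, set $F_\mu(t)\coloneqq \mu(\iota^{-1}([0,t]))$ for $t\in[0,1]$, and let $G_\mu(u)\coloneqq \inf\{t\in[0,1]: F_\mu(t)\ge u\}$ be its generalized inverse. For each fixed $t$, the map $\mu\mapsto F_\mu(t)$ is an evaluation map $\mathrm{ev}_B$, hence measurable by the choice of $\sigma$-algebra on $\cP(\bR^d)$. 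Since $\{G_\mu(u)\le t\}=\{F_\mu(t)\ge u\}$ and $F_\mu$ is right-continuous, $(\mu,u)\mapsto G_\mu(u)$ is jointly Borel; to see this, take $t$ rational together with a monotone approximation. The classical quantile transform then gives $G_\mu(U)\sim\iota_\#\mu$, which is concentrated on $B$. Define $\Psi(\mu,u)\coloneqq \iota^{-1}(G_\mu(u))$ when $G_\mu(u)\in B$, and $\Psi(\mu,u)\coloneqq 0$ otherwise. This map is Borel, and $\Psi(\mu,U)\sim\mu$.

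\textbf{Conditioning identity.} Let $X=\Psi(\hat Q,U)$ with $U$ independent of $\hat Q$. First take $g\ge 0$ measurable. The map $h(\mu)\coloneqq \int_0^1 g(\Psi(\mu,u))\,\rd u=\int g\,\rd\mu$ is measurable by Tonelli's theorem, since $g\circ\Psi$ is jointly measurable. The freezing lemma for independent variables then gives $\bE[g(\Psi(\hat Q,U))\mid\hat Q]=h(\hat Q)$ a.s. To verify this directly, note that for bounded measurable $\phi$,
\[
\bE[\phi(\hat Q)\,g(X)]=\int\!\!\int \phi(\mu)\,g(\Psi(\mu,u))\,\rd u\,\mathcal L(\hat Q)(\rd\mu),
\]
by Fubini's theorem and the product form of the joint law $\mathcal L(\hat Q,U)=\mathcal L(\hat Q)\otimes\mathrm{Unif}[0,1]$. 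Taking $g=\mathds 1_A$ yields $\mathcal L(X\mid\hat Q)=\hat Q$. For general $g$ with $\bE[\int|g|\,\rd\hat Q]<\infty$, apply the nonnegative case to $g^\pm$. Taking expectations in the case $g\mapsto|g|$ gives $\bE|g(X)|=\bE\int|g|\,\rd\hat Q<\infty$, so both parts are finite and subtracting them is legitimate.

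\textbf{Main obstacle.} The only delicate step is the joint measurability of $(\mu,u)\mapsto\Psi(\mu,u)$, which must be checked with respect to the evaluation $\sigma$-algebra on $\cP(\bR^d)$. That $\sigma$-algebra coincides with the Borel $\sigma$-algebra of the weak topology, a standard fact for Polish spaces. I would handle measurability through the sublevel-set identity for $G_\mu$ restricted to rational $t$. If the isomorphism route turns out to be cumbersome, I would fall back on the Kuratowski–Ryll-Nardzewski theorem, or simply cite the standard randomization lemma (e.g.\ Kallenberg's Lemma 3.22) for probability kernels into Borel spaces. Everything else is Fubini's theorem.
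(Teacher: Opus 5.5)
Your proposal is correct and follows essentially the same route as the paper: a Borel universal sampler $\Psi$, then the substitution (freezing) argument using the independence of $U$ and $\hat Q$ together with Fubini, and finally the split $g=g^{+}-g^{-}$. There are two differences. You construct $\Psi$ by hand via a Borel isomorphism and the quantile transform, where the paper cites Kallenberg's kernel representation lemma. You also handle nonnegative $g$ in one step by Tonelli, where the paper proves the bounded case and passes to the limit by monotone convergence; in that step, test against nonnegative $\phi$, e.g.\ $\mathds{1}_{\{\hat Q\in A\}}$, to avoid $\infty-\infty$ when $g(X)$ is not yet known to be integrable.
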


\begin{proof}
\emph{Step 1 (The kernel and universal sampler).}
By the kernel criteria, $(\mu,B)\mapsto \mu(B)$ is a probability kernel from $\cP(\bR^{d})$ to $\bR^{d}$.
Consequently, for every bounded measurable $g\colon \bR^{d}\to\R$, the map $\mu\mapsto\int g\,\d\mu$ is measurable on $\cP(\bR^{d})$ \cite[Lemma~3.1]{Kallenberg2021}.
By the kernel representation lemma, there exists a Borel map $\Psi:\cP(\bR^{d})\times[0,1]\to \bR^{d}$ such that $\Psi(\mu,U)\sim \mu$ for $U\sim\mathrm{Unif}[0,1]$ \cite[Lemma~4.22]{Kallenberg2021}.
If the probability space does not carry an independent $U$, enlarge it; see \cite[Lemma~8.16, Thm.~8.17]{Kallenberg2021}.

\emph{Step 2 (Conditional law).}
Fix $\nu\in\cP(\bR^{d})$. By construction, $\Psi(\nu,U)\sim\nu$, and therefore $\E[g(\Psi(\nu,U))]=\int g\,\d\nu$ for every bounded measurable $g$.
Substituting $\nu=\hat Q(\omega)$ and using independence of $U$ and $\hat Q$,
\[
\E \big[ g \big(\Psi(\hat Q,U)\big) \,|\, \hat Q\big]
=
\big(\nu\mapsto \E[g(\Psi(\nu,U))]\big)\Big|_{\nu=\hat Q}
=
\int g\,\d\hat Q\quad\text{a.s.}
\]
Thus $\mathcal L(X\mid \hat Q)=\hat Q$ and the identity holds for bounded $g$.

\emph{Step 3 (Extension to $g\ge 0$).}
For measurable $g\ge 0$, applying Step~2 to $g_n \coloneqq \min(g,n)$ and using monotone convergence twice,
\[
\E\big[ g(X) \, | \, \hat Q \big]
=
\lim_{n\to\infty} \E \big[ g_n(X) \, | \, \hat Q \big]
=
\lim_{n\to\infty}\int g_n\,\d\hat Q
=
\int g\,\d\hat Q\quad\text{a.s.}
\]

\emph{Step 4 (Extension to integrable $g$).}
For a measurable $g\colon \R^d\to\R$ write $g^{+}=\max\{g,0\}$ and $g^{-}=\max\{-g,0\}$, so that $g=g^{+}-g^{-}$ and $|g|=g^{+}+g^{-}$.
If $\E \big[\int |g|\,\d\hat Q \big]<\infty$, then in particular
$\E \big[\int g^{\pm}\,\d\hat Q \big]<\infty$, hence $\int g^{\pm}\,\d \hat Q<\infty$ a.s.
By Step~3 applied to $g^{+}$ and $g^{-}$,
$
\E\big[g^{\pm}(X)\,|\,\hat Q\big]=\int g^{\pm}\,\d \hat Q
$
a.s.
Subtracting/adding the identities for $g^{+}$ and $g^{-}$ yields
\[
  \E \big[g(X)\,|\,\hat Q\big]=\int g\,\d \hat Q \quad \text{a.s.},
  \qquad
  \E \big[\abs{g(X)}\,|\,\hat Q\big]=\int \abs{g}\,\d \hat Q \quad \text{a.s.}
\]
Finally, taking expectations and using $\E \big[\int |g|\,\d\hat Q \big]<\infty$ gives $\E[ |g(X)| ]<\infty$.
\end{proof}

\begin{remark}[Realization on an extended space; w.l.o.g.\ representation]
\label{rem:wlog_realization_X}
In applications we may, \emph{without loss of generality}, assume that our sampling variable is of the form
$
X=\Psi(\hat Q,U)
$
for a Borel map $\Psi \colon \cP(\R^d)\times[0,1]\to\R^d$ and an auxiliary $U\sim\mathrm{Unif}[0,1]$ independent of $\hat Q$ (after enlarging the probability space if needed).
Indeed, given any pair $(\hat Q,X)$ with $\mathcal{L}(X\,|\, \hat Q)=\hat Q$ a.s., there exists an extension of the space, an independent $U$, and a Borel $\Psi$ such that a copy $X'$ of $X$ satisfies
\[
X'=\Psi(\hat Q,U)\quad\text{a.s.},\qquad (\hat Q,X')\stackrel{d}=(\hat Q,X).
\]
All conditional--expectation identities we use (e.g.\ $\E[g(X)\,|\, \hat Q]=\int g\,\d \hat Q$ under the stated integrability) depend only on the \emph{joint law} of $(\hat Q,X)$; hence replacing $X$ by $X'$ is legitimate and we may work with the representation $X=\Psi(\hat Q,U)$ throughout.
\end{remark}

\section{Omitted Proofs}
\label{sec:proofs}

\begin{proof}[Proof of \Cref{prop:lowerbound}]
    Consider the parametrized class of bimodal probability distributions $(P_{n})_{n \in \bN}$ defined via their densities
    \[
    p_{n}
    =
    \tfrac12 \, \varphi_{-1,\sigma_{n}} + \tfrac12 \, \varphi_{3,\sigma_{n}}\,,
    \qquad
    \varphi_{m,s^{2}}(x)
    \coloneqq
    (2 \pi s^2)^{-1/2}\, \exp\!\bigg( - \frac{(x-m)^{2}}{2 s^2} \bigg),
    \]
    where $\sigma_{n} \coloneqq (4\log(n+1))^{-\frac12} < 1$.
    Since $f(x) = x$ we obtain
    \[
    I(f) \coloneqq \bE_{P_{n}}[f] = 1,
    \qquad
    \|f\|_{L^2(P_{\bar n})}^2
    =
    \sigma_{n}^2 + \frac12 ( 1^{2} + 3^{2})
    \leq
    6,    
    \]
    independent of $n \in \bN$.
    We consider sampling a random variable $Z$ from $P_{n}$ by $Z = -1 + 4V + Y$ with Bernoulli distributed $V \sim \mathrm{Ber}(1/2)$ and independent normally distributed $Y \sim \cN\big( 0 , \sigma_{n}^{2}\big)$.
    Let $Z_{i} = -1 + 4V_{i} + Y_{i}$, $i = 1,\dots,N$, be independent realizations of this procedure and define the event
    $E_{N} \coloneqq \{ V_{1} = \cdots = V_{N} = 0\}$
    (`all samples come from the left component of $P_{n}$'), which clearly has probability $\P(E_N) = 2^{-N} = \exp(-N\log(2))$.   
    Further, define the event
    \[
    F_{N,n}
    \coloneqq
    \{\hat V(Z_1,\dots,Z_N) < \sigma_{n}^{2}\}.
    \]
    Since, conditional on $E_{N}$, the random variable $S_{N-1} \coloneqq \frac{N-1}{\sigma_{n}^{2}} \hat V(Z_1,\dots,Z_N)$ is $\chi_{N-1}^{2}$ distributed,
    \begin{equation}
    \label{equ:prob_of_E_N_cap_F_Nn}
    \P(E_N\cap F_{N,n}) = \P \big(S_{N-1} < N-1 \mid E_N \big) \, \P(E_N)
    \ge
    \tfrac{1}{2}\exp(-N\log(2))\,.
    \end{equation}
    Conditional on $E_N\cap F_{N,n}$, the samples $X_{i}$ are i.i.d.\ with distribution
    $N(-1,\sigma_{n}^2+h_N^2)$, and $\sigma_{n}^2+h_N^2 \leq 2\sigma_{n}^2$ by the definition of $F_{N,n}$.
    Denote by $\Phi$ the standard normal cumulative distribution function and recall that, by Mill's ratio \citep{Wainwright_2019}, $\Phi(-x)\le \varphi_{0,1}(x)/x$ for all $x>0$.
    Hence, by the definition of $\sigma_{n} \coloneqq (4\log(n+1))^{-\frac12} < 1$ and setting $c \coloneqq (4 \pi \log 2)^{-1/2} < 1$,
    \begin{equation}
    \label{equ:technical_inequality_using_Mills_ratio}
    \begin{alignedat}{3}
    1 - \P(X_i \le 0 \mid E_N\cap F_{N,n})
    \ &=\ 
    1 - \Phi\big( (\sigma_{ n}^2 + h_{N}^{2})^{-1/2} \big)
    &
    \ &\leq\ 
    \Phi\big( - (2\sigma_{n}^{2})^{-1/2} \big)
    \\
    \ &\leq\ 
    \sqrt{2}\sigma_{n} \varphi_{0,1}\big( (2\sigma_{n}^{2})^{-1/2} \big)
    &
    \ &=\ 
    \frac{\sigma_{n}}{\sqrt{\pi}} \exp \big( - (2\sigma_{n})^{-2} \big)
    \\
    \ &\leq\ 
    \frac{c}{n+1}.
    \end{alignedat}
    \end{equation}
    Consequently,
    \begin{flalign*}
    \hspace{1em}
    \bE \big[ | I_{n}(f) - I(f) | \big]
    &\geq \bP\big( \abs{I_{n}(f) - I(f)} \geq 1 \big)
    & \text{Markov's inequality}&
    \\
    &\geq \bP\big( I_{n}(f) \leq 0 \big)
    & \text{$I(f) = 1$}&
    \\
    &\geq \bP( X_i \leq 0 \ \forall i=1,\dots,n )
    & \text{definition of $I_n$ and $f$}&
    \\
    &= \bP(E_N\cap F_{N,n}) \, \bP( X_1 \leq 0 \mid E_N\cap F_{N,n} )^{n}
    & \text{conditional independence}&
    \\
    &\geq \tfrac{1}{2}\exp(-N\log(2))\, \bigg( 1 - \frac{c}{n+1} \bigg)^{n+1}
    & \text{by \eqref{equ:prob_of_E_N_cap_F_Nn} and \eqref{equ:technical_inequality_using_Mills_ratio}}&
    \end{flalign*}
    Since \(0<c<1\), we have \(1-c/(n+1)>0\) for all \(n\in\mathbb N\). Moreover,
    $(1 - c/(n+1))^{n+1} \to e^{-c}>0$ as $n \to \infty$.
    Hence this sequence is bounded away from zero uniformly in \(n\). Consequently, there exists a constant \(C_N>0\), depending only on \(N\), such that
    \[
    \mathbb E\bigl[|I_n(f)-I(f)|\bigr]
    \ge C_N
    \]
    for all \(n\in\mathbb N\), which proves the claim.
\end{proof}

\begin{proof}[Proof of \Cref{lemma:KDE_bias}]
	Denote
	\begin{align*}
	(K_h*p)(x)
	&\coloneqq
	\int_{\R^d} K_h(x,y)\,p(y)\,\rd y
	=
	\int_{\R^d} K(u)\,p(x-hu)\,\rd u ,
	\\ 
	M_{2}(K)
	&\coloneqq
	\int_{\R^d} \|u\|^{2}\, |K(u)|\,\rd u.
	\end{align*}
	Since \(p\in W^{2,1}(\R^d)\cap W^{2,2}(\R^d)\), we cannot directly apply the classical pointwise Taylor formula to \(p\).
	We therefore argue by mollification.
	Let $(\rho_\varepsilon)_{\varepsilon>0}$ be a standard mollifier, i.e.
	$\rho_\varepsilon(x) = \varepsilon^{-d}\rho(x/\varepsilon)$, where
	$\rho \in C_c^\infty(\R^d)$, $\rho \ge 0$, and $\int_{\R^d} \rho(x)\,dx = 1$, and define
	$p_\varepsilon \coloneqq p * \rho_\varepsilon$.
	Then \(p_\varepsilon \in C^\infty(\R^d)\), and for every \(i,j\in\{1,\dots,d\}\) and \(q\in\{1,2\}\),
	\[
	\partial_{ij}p_\varepsilon
	=
	(\partial_{ij}p)*\rho_\varepsilon,
	\qquad
	\|\partial_{ij}p_\varepsilon\|_{L^q(\R^d)}
	\le
	\|\partial_{ij}p\|_{L^q(\R^d)}
	\]
	by Young's inequality. Moreover,
	$p_\varepsilon \to p$ in $L^1(\R^d)$ and in $L^2(\R^d)$ as $\varepsilon\to 0$.	
	We first prove the desired bounds for the smooth density \(p_\varepsilon\).
	By the second-order Taylor formula in integral form,
	\[
	p_\varepsilon(x-hu)
	=
	p_\varepsilon(x)
	-
	h\,u\cdot \nabla p_\varepsilon(x)
	+
	h^2\int_0^1 (1-t)\, u^\top D^2p_\varepsilon(x-thu)\,u\,\rd t .
	\]
    Since the kernel $K$ has vanishing first moment by assumption, it follows that
	\begin{align*}
		(K_h*p_\varepsilon)(x) - p_\varepsilon(x)
		&=
		\int_{\R^d} K(u)\, \big( p_\varepsilon(x-hu) - p_\varepsilon(x) \big) \,\rd u
		\\
		&=
		h^2\int_{\R^d} K(u)\int_0^1 (1-t)\,
		u^\top D^2p_\varepsilon(x-thu)\,u\,\rd t\,\rd u
		\\
		&\le
		h^2\int_{\R^d} |K(u)|\,\|u\|^2
		\int_0^1 (1-t)\sum_{i,j=1}^d |\partial_{ij}p_\varepsilon(x-thu)|\,\rd t\,\rd u ,
	\end{align*}
	where we used
	\[
	|u^\top D^2p_\varepsilon(z)\,u|
	\le
	\|u\|^2 \sum_{i,j=1}^d |\partial_{ij}p_\varepsilon(z)| .
	\]
	Let \(q\in\{1,2\}\). Using Fubini's theorem for \(q=1\), and Minkowski's
	integral inequality for \(q=2\), together with translation invariance of the
	\(L^q\)-norm, we obtain
	\begin{align*}
		\|K_h*p_\varepsilon-p_\varepsilon\|_{L^q(\R^d)}
		&\le
		h^2 \int_{\R^d} |K(u)|\,\|u\|^2
		\int_0^1 (1-t) \,
		\bigg\|
		\sum_{i,j=1}^d |\partial_{ij}p_\varepsilon(\cdot-thu)|
		\bigg\|_{L^q(\R^d)}
		\rd t\,\rd u
		\\
		&\leq
		h^2 \int_{\R^d} |K(u)|\,\|u\|^2\,\rd u
		\int_0^1 (1-t)\,\rd t \, 
		\sum_{i,j=1}^d \|\partial_{ij}p\|_{L^q(\R^d)}
		\\
		&\leq
		\frac{h^2}{2}\,M_2(K)\sum_{i,j=1}^d \|\partial_{ij}p\|_{L^q(\R^d)} .
	\end{align*}
	It remains to pass to the limit \(\varepsilon\to0\).
	Since \(K\in L^1(\R^d)\),
	Young's convolution inequality yields	
	\begin{align*}
		\|K_h*p-p-(K_h*p_\varepsilon-p_\varepsilon)\|_{L^q(\R^d)}
		&\le
		\|K_h*(p-p_\varepsilon)\|_{L^q(\R^d)}
		+
		\|p-p_\varepsilon\|_{L^q(\R^d)}
		\\
		&\le
		\|K_h\|_{L^1(\R^d)}\|p-p_\varepsilon\|_{L^q(\R^d)} + \|p-p_\varepsilon\|_{L^q(\R^d)}
		\to 0.
	\end{align*}
	Therefore,
	\[
	\|K_h*p-p\|_{L^q(\R^d)}
	\le
	\frac{h^2}{2}\,M_2(K)\sum_{i,j=1}^d \|\partial_{ij}p\|_{L^q(\R^d)}\, ,
	\]	
	proving the claim with
	\[
	C_{\mathrm{bias},1}
	=
	\frac{M_2(K)}{2} \sum_{i,j=1}^d \|\partial_{ij}p\|_{L^1(\R^d)},
	\qquad
	C_{\mathrm{bias},2}
	=
	\frac{M_2(K)^2}{4}\,
	\bigg(\sum_{i,j=1}^d \|\partial_{ij}p\|_{L^2(\R^d)}\bigg)^{\! 2}.
	\]
\end{proof}

\begin{proof}[Proof of \Cref{thm:MarkovKDE_stationary}]
	Fix $x\in\R^d$ and define
	\[
	g_x(z) \coloneqq K_h(x,z),
	\qquad
	\bar g_x(z) \coloneqq g_x(z)-\bE_P[g_x].
	\]	
	Then
	\[
	\hat q_N(x)-\bE_P[K_h(x,\cdot)]
	=
	\frac1N\sum_{k=1}^N \bar g_x(Z_k).
	\]	
	By \cite[Theorems~3.1 and~3.2]{Paulin2015concentration}, there exists a constant $C_{\mathrm{mc}} > 0$ such that
	\begin{equation}
	\label{equ:pointwise_Paulin_application}
	\bE \left[
	\big(
	\hat q_N(x)-\bE_P[K_h(x,\cdot)]
	\big)^2
	\right]
	=
	\frac1{N^2}
	\Var \bigg(\sum_{k=1}^N g_x(Z_k)\bigg)
	\le
	\frac{C_{\mathrm{mc}}}{N}\Var_P(g_x)
	\le
	\frac{C_{\mathrm{mc}}}{N}\, \bE_P[g_x^2].
	\end{equation}
    In the case of i.i.d.\ data, the same result holds for $C_{\mathrm{mc}} = 1$.
	The \(L^2\)-bound \eqref{equ:thm_MarkovKDE_stationary_L2_bound} follows from integrating over $x$, since
	\begin{align*}
	\int_{\R^d}\bE_P[g_x^2] \, \rd x
	=
	\int_{\R^d}\int_{\R^d}K_h(x,z)^2\,P(\rd z) \, \rd x
	&=
	\int_{\R^d}
	\left(\int_{\R^d}K_h(x,z)^2\rd x\right)
	P(\rd z)\\
	&=
	\frac1{h^d}\|K\|_{L^2(\R^d)}^2.
	\end{align*}
	The \(L^1\)-bound \eqref{equ:thm_MarkovKDE_stationary_L1_bound} can be derived from the \eqref{equ:pointwise_Paulin_application} using the tail condition \eqref{eq:poly_tail_p_and_weighted_K2_moment}.
    First note that
	\[
	(1+\|x-y\|)(1+\|y\|)
	\geq
	1+\|x-y\| + \norm{y}	
	\geq
	1+\|x\|
	\]
	implies
	\[
	(1+\|x-y\|)^{-m}
	\le
	(1+\|x\|)^{-m}(1+\|y\|)^m.
	\]
	Hence, using \eqref{eq:poly_tail_p_and_weighted_K2_moment} and denoting $\kappa_{h}(x) \coloneqq K(x/h)^{2}$,
	\begin{align*}
	\int_{\R^d}
	\Bigl(\bE_P[g_x^2]\Bigr)^{1/2}\rd x
	&=
	\frac{1}{h^{d}}
	\int_{\R^d}\sqrt{(p*\kappa_h)(x)}\,\rd x
	\\
	&=
	\frac{1}{h^{d}}
	\int_{\R^d} \bigg(\int_{\R^d}  p(x-y)\, \kappa_h(y)\, \rd y \bigg)^{1/2}\,\rd x
	\\
	&\leq
	\frac{\sqrt{C_{p}}}{h^{d}}
	\int_{\R^d} \bigg( (1+\|x\|)^{-m}\int_{\R^d}  (1+\|y\|)^m\, \kappa_h(y)\, \rd y \bigg)^{1/2}\,\rd x
	\\
	&=
	\sqrt{\frac{C_{p}}{h^{d}}}
	\int_{\R^d}  (1+\|x\|)^{-m/2} \bigg( \int_{\R^d}  (1+h\|u\|)^m\, K^{2}(u)\, \rd u \bigg)^{1/2}\,\rd x
	\\
	&=
	\frac{\sqrt{C_p\,M_{K,m}}}{h^{d/2}}
	\int_{\R^d}(1+\|x\|)^{-m/2}\,\rd x,
	\end{align*}
	where we used that $0 < h < 1$ in the last step.
	Denoting $C_{1} \coloneqq \sqrt{C_{\mathrm{mc}}\,C_p\,M_{K,m}}
	\int_{\R^d}(1+\|x\|)^{-m/2}\,\rd x$ and applying Jensen's inequality (pointwise in \(x\)) to \eqref{equ:pointwise_Paulin_application} gives
	\[
	\int_{\R^d}
	\E\bigl[
	\bigl|\hat q_N(x)-\bE_P[K_h(x,\cdot)]\bigr|
	\bigr]
	\,\rd x
	\le
	\int_{\R^d}
	\sqrt{\frac{C_{\mathrm{mc}}}{N}\,
	\bE_P[g_x^2]}
	\,\rd x
	\leq
	\frac{C_{1}}{\sqrt{N}\,h^{d/2}}.
	\]
\end{proof}

\begin{proof}[Proof of \Cref{lem:weightedVnorm}]
	First note that, for fixed $x\in\R^d$,
	\begin{align*}
	&\|x-hz\|\ge \|x\|-h\|z\|\ge \frac{\|x\|}2&
	&\text{if } z \in A_x \coloneqq \Big\{z \in\R^d \mid \|z\|<\frac{\|x\|}{2h}\Big\},
	\\
	&\|z\|\ge\frac{\|x\|}{2h}&
	&\text{if } z \in B_x \coloneqq \Big\{z \in\R^d \mid \|z\|\ge \frac{\|x\|}{2h}\Big\}.
	\end{align*}
	Since $K(z)\le \Psi(\|z\|)\le \Psi(0)$ and $V^{-1/2}\le \Phi\le 1$, writing  $z=(x-y)/h$ gives
	\begin{align*}
		h^{rd}\, S_h^{r}(x)
		&=
		\sup_{z\in\R^d} K^{r}(z)V^{-r/2}(x-hz)
		\\
		&\leq
		\sup_{z\in A_{x}} \Psi^{r}(0)\, \Phi^{r}(\|x-hz\|)
		+
		\sup_{z\in B_{x}} \Psi^{r}(\norm{z})
		\\
		&\leq
		\Psi^{r}(0)\, \Phi^{r}(\|x\|/2)
		+
		\Psi^{r}(\norm{x}/(2h)).
	\end{align*}
	Integrating and applying the substitutions $x=2u$ and $x=2hu$ yields
	\begin{align*}
		h^{rd}\int_{\R^d}S_h(x)^r\,dx
		&\le
		\Psi(0)^r\int_{\R^d}\Phi(\|x\|/2)^r\,dx
		+
		\int_{\R^d}\Psi(\|x\|/(2h))^r\,dx
		\\
		&=
		2^d \Psi(0)^r \int_{\R^d}\Phi(\|x\|)^r\,dx
		+
		2^d h^d \int_{\R^d}\Psi(\|x\|)^r\,dx .
	\end{align*}
	Switching to polar coordinates and dividing by $h^{rd}$ completes the proof.
\end{proof}

\end{appendix}

\section*{Acknowledgments}
\addcontentsline{toc}{section}{Acknowledgements}
This research was funded by the Deutsche Forschungsgemeinschaft (DFG, German Research Foundation) --- CRC/TRR 388 ``Rough Analysis, Stochastic Dynamics and Related Fields'' --- Project ID 516748464. The authors would like to thank Jason Beh, Robert Scheichl, Claudia Schillings, Björn Sprungk, Lukas Trottner and Sven Wang for helpful discussions and suggestions.

\bibliographystyle{abbrvnat}
\bibliography{myBibliography} 
\addcontentsline{toc}{section}{References}

\end{document}